\def\T {\mathcal{T}}
\def\Pa{\mathcal{P}}
\def\J {\mathcal{J}}
\def\D      {\mathscr{D}}
\def\dellipse{\mathcal{E}}
\def\tx      {\tilde{x}}
\def\ty      {\tilde{y}}
\def\tz      {\tilde{z}}
\def\tH      {\tilde{H}}
\def\tk      {\tilde{k}}
\begin{document}
\title{Fractional Laplacian -- Quadrature rules for singular double integrals in 3D}
\author{B. Feist and M. Bebendorf\footnote{Mathematisches Institut, Universit\"at Bayreuth, 95447 Bayreuth, Germany}}
\date{\today}
\maketitle

\begin{abstract}
In this article, quadrature rules for the efficient computation of the stiffness matrix for the fractional Laplacian in three dimensions
are presented. These rules are based on the Duffy transformation, which is a common tool for singularity removal.
Here, this transformation is adapted to the needs of the fractional Laplacian in three dimensions.
The integrals resulting from this Duffy transformation are regular integrals over less-dimensional domains.
We present bounds on the number of Gauss points to guarantee error estimates which are of the same order of
magnitude as the finite element error. The methods presented in this article can easily be adapted to other singular double integrals in three dimensions with algebraic singularities.
\end{abstract}
\textbf{Keywords:} fractional Laplacian, non-local operators, quadrature rules \\
\textbf{MSC 2010:} 65D32  $\cdot$  65D30 $\cdot$ 65N30 $\cdot$ 35R11
\section{Introduction }\label{sec:intro}
The fraction Laplacian is used in many applications, e.g.\ in the modeling of cardiac electrical propagation \cite{Bueno-Orovio}, in fractional quantum mechanics \cite{Laskin_qunatum}, \cite{Laskin_schroedinger}, in the modeling of anomalous diffusion \cite{Carafelli_Diff}, \cite{Zaslavyky}, and in fluid mechanics \cite{Constantin06} and \cite{Constantin16}.
Assuming that $\Omega \subset \R^d$ is a Lipschitz domain and $f \in H^r(\Omega)$, $r\geq-s$,
then we consider the fractional Poisson problem
\begin{equation}\label{eq:fracL}
\begin{aligned} 
   (-\Delta)^s u &= f \quad\text{in } \Omega, \\
               u &= 0 \quad\text{on } \R^d \backslash \Omega,
\end{aligned}
\end{equation}
where the \textit{fractional Laplacian} (see~\cite{AcBo2017}) is defined as
\[
   (-\Delta)^s u (x) = c_{d,s}\, p.v. \int_{\R^d} \frac{u(x)-u(y)}{\lvert x-y\rvert^{d+2s}}\ud y, \quad  c_{d,s} := \frac{2^{2s}\,\Gamma(s+d/2)}{\pi^{d/2}\,\Gamma(1-s)}.
\]
Here, $s\in(0,1)$ is called the order of the fractional Laplacian, $\Gamma$ is the Gamma function, and p.v.\ denotes the Cauchy principal value of the integral.

For the numerical treatment of \eqref{eq:fracL}, the fractional Laplacian is discretized via a usual Galerkin
method. The coefficient matrix of the resulting linear system consists of entries for which reliable and efficient
quadrature rules need to be developed. In particular, the strong singularity of the arising double integrals
needs to be accounted for. In two dimensions ($d=2$) such quadrature rules have been developed in \cite{AiCl2018}.
In this article, quadrature rules are derived for the three-dimensional ($d=3$) case.

Notice that the fractional Laplacian requires the evaluation of an integral over an unbounded domain although $u$ vanishes outside~$\Omega$.
Nevertheless, the weak formulation of \eqref{eq:fracL} can be done in such a way that only integrals over $\Omega$ and
its boundary~$\partial\Omega$ appear. Quadrature rules for the
treatment of singular integrals usually rely on the Duffy transformation~\cite{Duffy}. We apply this method to
the interaction of two tetrahedra and the interaction of tetrahedra with triangles. Both cases arise after
discretization of~$\Omega$ and~$\partial\Omega$.

The article is organized as follows. In Sect.~\ref{sec:fl}, we introduce the variational formulation of \eqref{eq:fracL}
together with its reformulation over bounded domains. In order to be able to adapt the accuracy of the
developed quadrature rules to the finite element error, we provide a finite element error estimate presented in~\cite{AiCl2018}.
Sect.~\ref{sec:Duffy_Trafo} is devoted to the Duffy transformation. We present an approach for singular integrals
over multi-dimensional domains. In order to lift the singularity, it will be seen that a certain ordering
of the variables of integration is required. In Sect.~\ref{sec:Duffy_3d3d}, the interaction of two tetrahedra
is studied. Four singular cases have to be distinguished: a common vertex, a common edge, a common face, and identical tetrahedra.
For each of these cases, it is required to split up the domain of integration to ensure a suitable ordering of the integration variables.
Depending on the singularity, multiple splits have to be applied.
Corresponding cases are investigated in Sect.~\ref{sec:Duffy_3d2d},
where the interaction of a tetrahedron and a triangle is treated. As a result of Sect.~\ref{sec:Duffy_Trafo},
we obtain regular integrals over less-dimensional domains of integration. For the latter, in Sect.~\ref{sec:Error_Estimates},
suitable tensor Gauss quadrature rules are presented. Their error estimates rely on the analytic extension of
the integrands. We show that the integrands arising from the Duffy transformations developed in Sect.~\ref{sec:Duffy_Trafo}
satisfy the requirements of these error estimates. This allows to adapt the number of
Gauss points to the order of magnitude of the finite element error.
The last section presents two numerical tests. First, the accuracy of the quadrature rules is verified in the
case of the interaction of two tetrahedra and the interaction of a tetrahedron with a triangle. The second numerical
test is focused at the numerical solution of the fractional boundary value problem~\eqref{eq:fracL}. Here, all mentioned singularity cases appear.
The computed solution is compared with the exact solution known for a particular choice of the right-hand side~$f$ and the computational domain~$\Omega$.

The quadrature rules developed in this article for the fractional Laplacian can also be applied to other singular double integrals
over two tetrahedra or over the combination of a tetrahedron and a triangle in three dimensions.

\section{Fractional Laplacian}\label{sec:fl}
The solution of the problem~\eqref{eq:fracL} is searched for in the Sobolev space
\[
 H^s(\Omega) = \{ v \in L^2(\Omega): |v|_{H^s(\Omega)} < \infty \},
\]
where
\begin{align*}
   |v|_{H^s(\Omega)}^2 =  \int_\Omega\int_\Omega \frac{[v(x)-v(y)]^2}{|x-y|^{d+2s}}\ud x \ud y
\end{align*}
denotes the Slobodeckij seminorm. The space $H^s(\Omega)$ is a Hilbert space, equipped with the norm
\begin{align*}
   \norm{v}_{H^s(\Omega)} = \norm{v}_{L^2(\Omega)} + |v|_{H^s(\Omega)}.
\end{align*}
Zero trace spaces $H_0^s(\Omega)$ can be defined as the closure of $C_0^\infty(\Omega)$ with respect to this norm.

Due to the non-local nature of the operator, we need to define
\[
   \tH^s(\Omega) =\{ u \in H^s(\R^d):u=0\text{ in }\R^d\setminus\Omega\}.
\]
$\tH^s(\Omega)$ is the closure of $C_0^\infty(\Omega)$ in $H^s(\R^d)$; see~\cite[Chap.~3]{McLean}.
It is known (see \cite{ag2017_2}) that $\tH^s(\Omega)$ coincides with $H_0^s(\Omega)$ for $s \neq 1/2$,
and for $s=1/2$ it holds that $\tilde H^{1/2}(\Omega) \subset H_0^{1/2}(\Omega)$.
Negative order spaces $H^{-s}(\Omega)$ can be defined as the dual space of~$\tH^s(\Omega)$. 

With this notation we can reformulate \eqref{eq:fracL} in a weak form: given $f \in H^{r}(\Omega)$ 
find $u\in \tilde H^s(\Omega)$ satisfying
\begin{equation*}
   a(u,v) = \langle f,v\rangle_{\Omega}, \quad v \in \tH^s(\Omega).
\end{equation*}
By using symmetry and the Gauss theorem (see~\cite{AiCl2018}), the bilinear form can be simplified to
\[
   a(u,v) =  \frac{c_{d,s}}{2} \int_\Omega \int_\Omega \frac{[u(x) - u(y)]\,[v(x)-v(y)]}{ |x-y|^{d+2s}}\ud x \ud y
           + \frac{c_{d,s}}{2s} \int_\Omega u(x)\,v(x) \int_{\partial \Omega} \frac{(y-x)^T\, n_y}{|x-y|^{d+2s}}\ud s_y \ud x,
\]
where $n_y$ is the outer normal vector with respect to $\partial \Omega$ and $\langle\cdot,\cdot\rangle_\Omega$ denotes the duality pairing.
This simplification is crucial, since the domain of integration is now bounded.
Moreover, $\tilde H^s(\Omega)$ can be equipped with the energy norm
\[
  \lVert u \rVert_{\tilde H^s(\Omega)} =  \sqrt{a(u,u)} = \sqrt{\frac{c_{d,s}}{2}}\,|u|_{H^s(\R^d)}.
\]

Let the set $\{\varphi_{1},\dots, \varphi_{N}\}$ denote the nodal basis of the space of piecewise linear functions~$V(\mathcal{T})\subset \tilde H^s(\Omega)$,
where $\mathcal{T}$ is a regular partition  of $\Omega$ into $M$ tetrahedra and $N$ inner points.
$\mathcal{P}$ is the set of panels, being the faces of the tetrahedra of $\T$.
Additionally, we impose two assumptions on the discretization.
The first one is that the elements of the discretization, the panels and the tetrahedra, are shape-regular, i.e.\ the interior angles of the tetrahedra are bounded below by the angle $\theta_\T>0$: 
\begin{equation}\label{eq:theta_T}
	\theta_t > \theta_\T \quad \text{ for all } t \in \T,    
\end{equation}
where $\theta_t$ is minimal interior angle of the tetrahedron $t$.
Since each element of $\Pa$ is a face of a tetrahedron in $\T$, the interior angles of the panels are also bounded below by $\theta_\T$.
The second assumption is that the tetrahedra are quasi-uniform, i.e.\ there exist constants $c_1, c_2 >0$ with
\begin{equation} \label{assum:quasi_uniform}
    c_1\, h_{t_1} \leq h_{t_2} \leq c_2\, h_{t_1}
\end{equation}
for all $t_1,t_2 \in \T$, where $h_t$ is defined as the diameter of the tetrahedron $t$. 

Moreover, we introduce $h$ as the mean of all diameters, i.e.\
\begin{equation*}
    h := \frac{1}{|\T|} \sum_{t\in \T} h_t,
\end{equation*}
then \eqref{assum:quasi_uniform} is also valid for $h$, i.e.\
\begin{equation}\label{assum:quasi_uniform_h}
   c_1\, h \leq h_t \leq c_2 \, h
\end{equation}
for all $t \in \T$.  

The Galerkin method applied to \eqref{eq:fracL} yields the discrete fractional Poisson problem $Ax=g$ with $A \in \R^{N \times N}$, $g \in \R^N$ having the entries
\begin{equation*}
 \begin{split}
  a_{ij} &=\quad   \frac{c_{d,s}}{2} \int_\Omega \int_\Omega \frac{[\varphi_i(x) - \varphi_i(y)]\,[\varphi_j(x)-\varphi_j(y)]}{ |x-y|^{d+2s}}\ud x\ud y \\
         &\quad  + \frac{c_{d,s}}{2s} \int_\Omega \fie_i(x)\,\fie_j(x) \int_{\partial \Omega} \frac{(y-x)^T\, n_y}{|x-y|^{d+2s}}\ud s_y \ud x, \quad i,j = 1,\dots,N, \\
  g_{i} &= \langle f,\varphi_i\rangle_{\Omega}, \quad i = 1,\dots, N.
 \end{split}
\end{equation*}
By using the compact support of the linear basis functions, the expression for $a_{ij}$ can be simplified to
\begin{align}\label{eq:aij_nah}
\begin{split}
  a_{ij} &=\quad   \frac{c_{d,s}}{2} \int_{\Omega_{ij}} \int_{\Omega_{ij}} \frac{[\varphi_i(x) - \varphi_i(y)]\,[\varphi_j(x)-\varphi_j(y)]}{ |x-y|^{d+2s}}\ud x\ud y \\
         &\quad  + \frac{c_{d,s}}{2s} \int_{\Omega_{ij}} \fie_i(x)\,\fie_j(x) \int_{\partial\Omega_{ij}} \frac{(y-x)^T\, n_y}{|x-y|^{d+2s}}\ud s_y \ud x 
\end{split}         
\end{align}
where $\Omega_{ij} := \suppa\fie_i \cup \suppa\fie_j$. 
If the supports of the basis functions $\fie_i$ and $\fie_j$ are disjoint, 
the computation of the entry~$a_{ij}$ simplifies to
\begin{equation} \label{eq:aij_fern}
   a_{ij} = -c_{d,s} \int_{\suppa\fie_i} \int_{\suppa\fie_j} \frac{\fie_i(x)\, \fie_j(y)}{ |x-y|^{d+2s}}\ud x\ud y.
\end{equation}
For the following error estimates on the Galerkin approximation $u_h$ see \cite{AiCl2018}.
\begin{theorem} \label{thm:error_u_h}
 If the family of triangulations $\T_h$ is shape-regular and globally quasi-uniform, 
 and $u\in H^l(\Omega)$ for $0<s<l<1$ or $1/2 < s <1$ and $1<l<2$, then
 \begin{equation}\label{eq:error_u_h}
     \norm{u-u_h}_{\tilde H^s(\Omega)} \leq C(s,d)\, h^{l-s}\, |u|_{H^l(\Omega)}.
 \end{equation}
 In particular, by applying regularity estimates for $u$ in terms of the data $f$,
 the solution satisfies 
 \begin{equation*}
    \norm{u-u_h}_{\tilde H^s(\Omega)} \leq \left\{ \begin{array}{ll} 
                                                         C(s)\,h^{1/2}\,|\log h|\, \norm{f}_{C^{1/2-s}(\Omega)} & \text{ if } 0<s<1/2,\\
                                                         C(s)\,h^{1/2}\,|\log h|\, \norm{f}_{L^\infty(\Omega)}  & \text{ if } s=1/2,\\
                                                         \frac{C(s,\beta)}{2s-1}\,h^{1/2}\,\sqrt{\log h}\, \norm{f}_{C^\beta(\Omega)} & \text{ if } 1/2<s<1,\, \beta>0.                                                                                       \end{array}\right.
 \end{equation*}
\end{theorem}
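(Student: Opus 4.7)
The plan is to reduce the error estimate to a best-approximation problem via Galerkin orthogonality, then apply interpolation estimates in fractional Sobolev spaces, and finally invoke regularity results for the fractional Poisson problem to translate Sobolev smoothness of $u$ into smoothness of the data $f$.

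First, I would note that because the bilinear form $a(\cdot,\cdot)$ induces the energy norm $\|\cdot\|_{\tilde H^s(\Omega)}$, it is trivially coercive and continuous on $\tilde H^s(\Omega)$. Since $V(\T)\subset\tilde H^s(\Omega)$ is a conforming subspace, Céa's lemma gives the quasi-optimality
\begin{equation*}
 \norm{u-u_h}_{\tilde H^s(\Omega)} \leq \inf_{v_h\in V(\T)} \norm{u-v_h}_{\tilde H^s(\Omega)}.
\end{equation*}
Choosing $v_h$ to be a suitable quasi-interpolant (Scott--Zhang or a Clément-type operator, as nodal interpolation is not available for $l<3/2$) mapping into $V(\T)$ and preserving the zero trace on $\R^d\setminus\Omega$, standard fractional interpolation estimates on a shape-regular, globally quasi-uniform mesh yield
\begin{equation*}
 \norm{u-v_h}_{H^s(\Omega)} \leq C(s,d)\,h^{l-s}\,|u|_{H^l(\Omega)}
\end{equation*}
for the ranges of $l$ and $s$ stated in the theorem, where the restriction $l<1$ for $s\leq 1/2$ and $l<2$ for $s>1/2$ reflects the regularity of $V(\T)$ and the requirement that $v_h\in\tilde H^s(\Omega)$. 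Combining these two inequalities and recalling that the energy norm and $|\cdot|_{H^s(\R^d)}$ are equivalent on $\tilde H^s(\Omega)$ produces \eqref{eq:error_u_h}.

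For the sharper estimates in terms of $f$, I would invoke the known boundary-regularity theory for the Dirichlet fractional Laplacian (as developed by Grubb and by Ros-Oton/Serra): if $f$ lies in a Hölder (or $L^\infty$) class as specified, then $u\in\tilde H^{s+1/2-\eps}(\Omega)$ for every $\eps>0$, with a norm that blows up like $\eps^{-1/2}$ as $\eps\to 0$. Substituting $l=s+1/2-\eps$ into \eqref{eq:error_u_h} and optimizing the resulting $\eps$-dependent constant against the factor $h^{1/2-\eps}$ (choosing $\eps\sim 1/|\log h|$) produces the logarithmic factors $|\log h|$ and $\sqrt{\log h}$ in the three cases, while the constant $(2s-1)^{-1}$ in the regime $1/2<s<1$ comes from the blow-up of the regularity constant as the relevant Hölder/$\tilde H^s$ embeddings degenerate.

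The main obstacle is not the Céa--interpolation step, which is standard, but establishing the sharp regularity $u\in\tilde H^{s+1/2-\eps}(\Omega)$ together with explicit control of the norm in terms of $\eps$, $s$, and the data. The boundary singularity of solutions of the fractional Poisson problem caps the attainable Sobolev regularity at roughly $s+1/2$, which is precisely what forces the uniform rate $h^{1/2}$ across all $s\in(0,1)$ and the appearance of the logarithmic factor after $\eps$-optimization. The detailed argument, including the identification of the correct function-space scale and the tracking of constants, is carried out in \cite{AiCl2018} and is the genuinely delicate part of the proof.
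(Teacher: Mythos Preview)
The paper does not prove this theorem at all; it merely states it and refers the reader to \cite{AiCl2018} for the proof. Your outline---C\'ea's lemma for quasi-optimality, a Scott--Zhang/Cl\'ement quasi-interpolant for the fractional approximation estimate, and then the boundary-regularity theory of the fractional Laplacian combined with an $\eps\sim 1/|\log h|$ optimization to extract the logarithmic factors---is the standard route and is exactly what \cite{AiCl2018} does, so your proposal is consistent with (and in fact more detailed than) the paper's treatment.
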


These results naturally assume that the stiffness matrix $A$ can be calculated exactly. 
However, looking at \eqref{eq:aij_nah} it seems that this is not possible.
Even in the simplest case (see \eqref{eq:aij_fern}) the six-dimensional integrals cannot be calculated analytically.
Therefore, we have to rely on numerical integration.
Additionally, the integrals may even become singular if the supports of the linear basis functions have a nonempty intersection.
In the next section, we present a method to lift these singularities
and we provide estimates in terms of the discretization parameter~$h$ 
indicating which quadrature rule must be used to guarantee the error estimate \eqref{eq:error_u_h}.

\section{Duffy Transformation} \label{sec:Duffy_Trafo}
The Duffy transformation is a well-known method for singularity lifting of singular integrals. 
In his original work M.~Duffy \cite{Duffy} described how to lift point singularities at one corner of a three-dimensional pyramid.
For this purpose the integration domain was transformed to the unit cube
and the singularity was lifted by the Jacobi determinant of the nonlinear transformation. 
This is done in a similar way as with spherical coordinates.

Based on this idea, Sauter and Schwab \cite{Sauter} have investigated singularities that can arise from the interaction of triangles,
such as a point singularity when the triangles touch at a corner, or a singularity along a common edge of these triangles.
Such problems occur in the context of the boundary element method for elliptic operators.

Here, their theory and the Duffy transformation are adapted to the requirements of the fractional Laplacian in three dimensions, i.e.\ to the integrals in \eqref{eq:aij_nah}:
\begin{align}
    I_{t_1,t_2} &:= \int_{t_1 } \int_{t_2} k_1(x,y)\ud y \ud x\ 
                 := \int_{t_1 } \int_{t_2} \dfrac{ [\varphi_i(x) - \varphi_i(y)] \, [\varphi_j(x) - \varphi_j(y)]} { \lvert x-y \rvert^{3+2s}}\ud y \ud x, \label{int_3d3d}\\
    I_{t,\tau}  &:= \int_t \int_\tau  k_2(x,y)\ud s_y \ud x 
                 := \int_t \varphi_i(x) \, \varphi_j(x) \int_\tau   \dfrac{ (y-x)^T n_\tau(y) } { \lvert x - y \rvert^{3+2s}}\ud s_y \ud x, \label{int_3d2d}
\end{align}
where $t,t_1,t_2 \in \mathcal{T}$, $\tau \in \mathcal{P}$ and $n_\tau$ is the normal vector with respect to the panel $\tau$.
As we can see in~\eqref{int_3d3d} and~\eqref{int_3d2d}, 
we have to study the cases of the interaction of two tetrahedra and of a tetrahedron and a triangle, respectively.
This results in a total of seven singularity cases, which must be considered individually.
However, the procedures for each case share common principles.
To understand the basic idea, we present an adapted version of the basic problem of \cite{Duffy},
\[
   I_p = \int_p f(\w) \ud \w,
\]
where $p := \{ \w \in \R^4: 0\leq \w_4 \leq \w_1, \; 0 \leq \w_3 \leq \w_2, \; 0 \leq \w_2 \leq \w_1  \text{ and } 0 \leq \w_1 \leq 1\}$,
and where $f(\w) = |\w|^{-2}$
is a function with a point singularity at $(0,0,0,0)^T$. Then we have
\begin{equation} \label{eq:advExample}
   I_p = \int_0^1 \int_0^{\w_1} \int_0^{\w_2} \int_0^{\w_1} \frac{1}{\w_1^2+\w_2^2+\w_3^2+\w_4^2}\ud \w_4 \ud \w_3 \ud \w_2 \ud \w_1.
\end{equation}   
Obviously, the singularity can be lifted with spherical coordinates. 
However, the integration domain is not well suited for this approach.
Surprisingly, the simple nonlinear transformation,
\[
   \xi := \w_1, \quad \eta_1 := \w_2/\w_1, \quad \eta_2 := \w_3/\w_2, \quad \eta_3 := \w_4/\w_1,
\]
is also sufficient to lift the singularity. 
First, we examine the Jacobi determinant $\deta \J(\xi,\eta) = \xi^3\eta_1$ of the nonlinear transformation.
Second, we consider the integration domain
\begin{align}\label{eq:domain_transform_unitcube}
  \left\{ \begin{array}{rcl}
		  0    &\leq \w_1 \leq& 1             \\
		  0    &\leq \w_2 \leq& \w_1       \\
		  0    &\leq \w_3 \leq& \w_2 \\
		  0    &\leq \w_4 \leq& \w_1             
		  \end{array} \right\} \Leftrightarrow
	  \left\{ \begin{array}{rclcl}
		  0    &\leq& \xi &\leq& 1             \\
		  0    &\leq& \xi \eta_1 &\leq& \xi       \\
		  0    &\leq& \xi \eta_1\eta_2 &\leq& \xi\, \eta_1 \\
		  0    &\leq& \xi \eta_3 &\leq& \xi         
		  \end{array} \right\}\Leftrightarrow 
	  \left\{ \begin{array}{rll}
		  0    &\leq \xi \leq& 1             \\
		  0    &\leq \eta_1 \leq& 1       \\
		  0    &\leq \eta_2 \leq& 1 \\
		  0    &\leq \eta_3 \leq& 1         
		  \end{array} \right\}  
\end{align}
and then the complete integral
\begin{align*}
   I_p &= \int_0^1 \int_0^1 \int_0^1 \int_0^1 \frac{\xi^3 \eta_1}{\xi^2 + (\xi\eta_1)^2+(\xi\eta_1\eta_2)^2+(\xi\eta_3)^2} \ud \eta_3 \ud \eta_2 \ud \eta_1 \ud \xi \\
       &= \frac12\int_0^1 \int_0^1 \int_0^1 \frac{\eta_1}{1+\eta_1^2+ (\eta_1\eta_2)^2+\eta_3^2} \ud \eta_3 \ud \eta_2 \ud \eta_1. 
\end{align*}
The decisive factor is the structure of the integration domain, which induces an ordering of the variables, i.e.\ 
\begin{equation} \label{eq:example_ordering}
    0 \leq \w_3 \leq \w_2 \leq \w_1 \leq 1 \quad \text{ and } \quad  0 \leq \w_4 \leq \w_1 \leq 1. 
\end{equation}    
This order is used by the nonlinear transformation to lift the singularity in two steps, which
is similar to using spherical coordinates, where $\xi$ takes the role of the radius. 
First, $\xi$ is factored out of the fraction, which is clearly not singular anymore, 
and second, the determinant of the Jacobi matrix is used to lift the remaining singularity with respect to $\xi$.

In order to apply this technique to our problems, we need to understand more about the ordering of the variables, which is of crucial importance in the removal of the singularity.
As it can be seen in~\eqref{eq:example_ordering}, the ordering leads to two inequality chains.
To be more precise it leads to one inequality chain, which splits up after at least one common variable (in this case $\w_1$).
We call such an inequality chain a forking inequality chain. In this situation,
after the application of the nonlinear transformation, $\xi$ can be factored out of the denominator,
which is crucial for lifting the singularity.
Therefore, we have to make sure that all variables can be described with a forking inequality chain.
To ensure such a forking chain, it may be necessary to split up the integration domain.

\subsection{Interaction between two tetrahedra} \label{sec:Duffy_3d3d}
First, we consider \eqref{int_3d3d}, i.e.\ how two tetrahedra interact with each other. 
In this case, we have to distinguish four different cases: the tetrahedra can have a common point, a common edge, a common face or can be identical.
In order to describe the resulting singularity types, the geometry will be shifted to the reference element, the unit tetrahedron
\[
   \tilde t := \{ \tx \in \R^3: 0 \leq \tx_3 \leq \tx_1 - \tx_2,\; 0 \leq \tx_2 \leq \tx_1,\; 0\leq \tx_1 \leq 1 \}.
\]
For this we need the linear mapping $\chi_t: \tilde t \to t$,
\begin{equation}\label{eq:chi_t}
    \chi_t(\tx) = M_t \tx + a_t,
\end{equation}
where $M_t := [b_t-a_t,c_t-b_t,d_t-b_t] \in \R^{3\times3}$ and $a_t,b_t,c_t$ and $d_t$ denote the vertices of the tetrahedron~$t$.  
Using the transformation theorem, we obtain for~\eqref{int_3d3d}
\begin{align} \label{eq:TT_start}
  I_{t_1,t_2} =  \int_{\tilde t } \int_{\tilde t} k_1(\chi_{t_1}(\tx), \chi_{t_2}(\ty))\, |M_{t_1}|\,|M_{t_2}|\ud \ty\ud \tx 
              =: \int_{\tilde t } \int_{\tilde t} \tilde k_1(\tx, \ty)\ud \ty\ud \tx,
\end{align}
where $|M_\star| := |\deta M_\star|$ for $\star \in\{ t_1, t_2\}$. 
Additionally, let the linear basis functions $\fie_i$ be represented in the following way,
\[
   \fie_i(x) := \alpha_{i,1:3}^T\, x + \alpha_{i,4}, \quad \alpha_i \in \R^4,
\]
where $\alpha_{i,1:3} := (\alpha_{i,1}, \alpha_{i,2}, \alpha_{i,3})^T$.
Additionally, we denote by $e_1$, $e_2$, and $e_3$ the canonical unit vectors in~$\R^3$.

\subsubsection{Point singularity}\label{sec:Duffy_3d3d_Vertex}
First, we consider the simplest case, that the two tetrahedra have a common vertex. 
Without loss of generality the mappings $\chi_{t_1}$ and $\chi_{t_2}$ are chosen in such a way that $a_{t_1} = a_{t_2}$.
This implies that the integrand $\tilde k_1$ is singular for $\tx = 0 = \ty$. 
By considering the integration domain as a set of inequalities and rewriting them,
\begin{align*}
   &\left\{ 
      0 \leq \tx_1 \leq 1,\; 
0    \leq \tx_2 \leq \tx_1, \; 
0    \leq \tx_3 \leq \tx_1 - \tx_2,\;   
0    \leq \ty_1 \leq 1,\;
0    \leq \ty_2 \leq \ty_1, \;
0    \leq \ty_3 \leq\ty_1 - \ty_2
\right\} \\ \Leftrightarrow 
  &\left\{ 
		  0         \leq \tx_1                \leq 1, \;
		  0         \leq \tx_2         \leq \tx_1,  \;
		  \tx_2     \leq \tx_2 +\tx_3 \leq \tx_1,\;
		  0         \leq \ty_1                \leq 1,\;
		  0         \leq  \ty_2        \leq \ty_1, \;
		  \ty_2     \leq \ty_2 +\ty_3 \leq \ty_1
	 \right\}, 		  
\end{align*}
we see that the variables form two disjoint inequality chains: 
\[
  0 \leq \tx_2 \leq \tx_2 + \tx_3 \leq \tx_1 \leq 1 \quad \text{ and } \quad 
  0 \leq \ty_2 \leq \ty_2 + \ty_3 \leq \ty_1 \leq 1.
\]
However, as the initial example~\eqref{eq:example_ordering} has shown, a forking inequality chain is needed to lift the singularity. 
Therefore, we connect the two chains with each other by introducing an artificial dependence on the first chain link:
\[
    \ty_1\leq \tx_1 \leq 1 \quad \text{ and } \quad   \tx_1 \leq \ty_1 \leq 1.
\]
From the geometric point of view, this means that the integration domain must be split up:
\begin{align} \label{eq:splitup_3d3dvertex}
\begin{split}
 &\left\{ 
             0     \leq \tx_1         \leq 1,\,       
             0     \leq \tx_2         \leq \tx_1,\,   
             \tx_2 \leq  \tx_2 +\tx_3 \leq \tx_1,\, 
             0     \leq \ty_1         \leq \tx_1,\, 
    	     0     \leq \ty_2         \leq \ty_1,\, 
	     \ty_2 \leq \ty_2 +\ty_3  \leq \ty_1 
 \right\} \cup \\
 &\left\{
		  0      \leq \ty_1           \leq 1,\, 
		  0      \leq \ty_2           \leq \ty_1,\, 
		  \ty_2  \leq   \ty_2 + \ty_3 \leq \ty_1,\,  
		  0      \leq \tx_1           \leq \ty_1,\, 
		  0      \leq  \tx_2          \leq \tx_1,\, 
		  \tx_2  \leq  \tx_2 + \tx_3  \leq \tx_1		  
  \right\}.
\end{split}	  
\end{align}
As a consequence, we obtain the desired forking inequality chain for each of the two domains.
For the first domain it reads
\[
  0 \leq \ty_2 \leq \ty_2+\ty_3 \leq \ty_1 \leq \tx_1 \leq 1 \quad \text{ and } \quad
  0 \leq \tx_2 \leq \tx_2 + \tx_3 \leq \tx_1 \leq 1
\]
and for the second domain
\[
  0 \leq \tx_2 \leq \tx_2 + \tx_3 \leq \tx_1 \leq \ty_1 \leq 1 \quad \text{ and } \quad
  0 \leq \ty_2 \leq \ty_2 + \ty_3 \leq \ty_1 \leq 1.
\]
Similar to the introductory example, these inequality chains are used to introduce a set of new variables $\omega \in \R^6$, $\w_1 := \tx_1$, $\w_2 := \tx_2+\tx_3$, $\w_3 := \tx_2$,
$\w_4 := \ty_1$, $\w_5 := \ty_2 + \ty_3$, and $\w_6 := \ty_2$. By interchanging the order of integration we see
for the first (and similarly for the second) domain:
	  \begin{equation*}
		\int_0^1 \int_0^{\w_1} \int_0^{\w_2} \int_0^{\w_1} \int_0^{\w_4} \int_0^{\w_5}
	            \tilde k_1 \left( \begin{bmatrix} \w_1 \\ \w_3  \\ \w_2 - \w_3  \end{bmatrix},
                                      \begin{bmatrix} \w_4 \\ \w_6  \\ \w_5 - \w_6  \end{bmatrix} \right) \ud \w.
          \end{equation*} 
Finally, each domain can be mapped onto the six-dimensional unit cube:
\begin{align*}
	          \xi    :=  \w_1,\;       \eta_1 := \w_2/\w_1,\; \eta_2 &:= \w_3/\w_2,\;
	          \eta_3 :=  \w_4/\w_1,\;  \eta_4 := \w_5/\w_4,\; \eta_5 := \w_6/\w_5,                                     
\end{align*}
where again $\deta\J(\xi,\eta)= \xi^5\,\eta_1\,\eta_3^2\,\eta_4 =: \xi^5\, \deta \J(\eta),$ denotes the Jacobi determinant of the nonlinear transformation.
The transformation of the integration domain works analogously to \eqref{eq:domain_transform_unitcube}. All in all this leads to 
\begin{align*}
   I_{t_1,t_2} &= \int_{ [0,1]} \int_{[0,1]^5} \{ \tilde k_1(\D_1(\xi,\eta)) + \tilde k_1(\D_2(\xi,\eta))\}\,  \deta \J(\xi,\eta) \ud \eta \ud \xi,               
\end{align*}
where
\begin{equation} \label{eq:simplify_D_vertex}
   \D_m(\xi,\eta):= (\D_m^1(\xi,\eta),\D_m^2(\xi,\eta)) = (\xi \D_m^1(\eta),\xi \D_m^2(\eta)) = \xi \D_m(\eta), \quad  m=1,2,
\end{equation} 
with 
\begin{align*}
   \D_1(\eta) := \left( \begin{bmatrix} 1      \\ \eta_1\eta_2         \\ \eta_1(1-\eta_2)         \end{bmatrix},
                        \begin{bmatrix} \eta_3 \\ \eta_4\eta_5\eta_6  \\ \eta_3\eta_4(1-\eta_5)  \end{bmatrix} \right)  \text{  and  }
   \D_2(\eta) := \left( \begin{bmatrix} \eta_3 \\ \eta_4\eta_5\eta_6  \\ \eta_3\eta_4(1-\eta_5)  \end{bmatrix},
                        \begin{bmatrix} 1      \\ \eta_1\eta_2         \\ \eta_1(1-\eta_2)         \end{bmatrix} \right)                 
\end{align*}
is the Duffy transformation for the $m$-th sub-domain.
To see that $\xi$ can also be factored out of
\[\tilde k_1(\tilde x,\tilde y)=\frac{[\fie_i(x)-\fie_i(y)][\fie_j(x)-\fie_j(y)]}{|x-y|^{d+2s}}|M_{t_1}||M_{t_2}|,
\]
where $x=\chi_{t_1}(\tilde x)$ and $y=\chi_{t_2}(\tilde y)$, let us first investigate the
denominator. Due to the choice of $\chi_{t_1}$ and $\chi_{t_2}$,
\begin{equation}\label{eq:linK}
  |x-y|^{-d-2s} = |\chi_{t_1}(\tx) - \chi_{t_2}(\ty)|^{-d-2s}                 
                = \xi^{-d-2s}  | M_{t_1}\D_m^1(\eta) - M_{t_2}\D_m^2(\eta)|^{-d-2s}, \quad m=1,2.  
\end{equation}

However, the examination of the product of the linear basis functions~$\fie_i$, $\fie_j$ is more complicated
due to the interaction between the linear basis functions and the tetrahedra.
Depending on the choice of the tetrahedra, terms in the product $[\fie_i(x)-\fie_i(y)][\fie_j(x)-\fie_j(y)]$ of the differences can vanish. 
There exit three different cases: a term vanishes in both, in one, and in none of the factors.
First, we consider the case that no term vanishes.
This happens if and only if  $t_1, t_2 \subset \suppa\fie_i \cap \suppa \fie_j$. 
In this case, we obtain
\begin{equation}\label{eq:lin} 
   \fie_\star(x) - \fie_\star(y) = \alpha_{\star,1:3}^T ( M_{t_1}\tx - M_{t_2} \ty) 
                                 = \xi \alpha_{\star,1:3}^T ( M_{t_1} \D_m^1(\eta) - M_{t_2} \D_m^2(\eta))
\end{equation} 
for $m=1,2$ and for $\star = i,j$.
This implies that $\xi$ can be factored out of the difference.
For the second case, we only have to investigate the factor which contains the vanishing term, because
the other factor can be treated in the same way as \eqref{eq:lin}.
Without loss of generality, we assume that $t_2\not\subset \suppa \fie_i$ and $t_1,t_2\subset \suppa \fie_j$.
Moreover, we know that $a_{t_1}=a_{t_2} \in t_2$, which implies that $\fie_i(a_{t_1}) = 0$ and that
\begin{equation}\label{eq:lin2}
   \fie_i(x) = \fie_i(\chi_{t_1}(\tx)) = \alpha_{i,1:3}^T M_{t_1}\tx + \fie_i(a_{t_1}) = \alpha_{i,1:3}^T M_{t_1}\tx = \xi \alpha_{i,1:3}^T M_{t_1} \D_m^1(\eta)
\end{equation}
for $m=1,2.$
The last case happens if and only if $\suppa \fie_i \cap \suppa \fie_j = a_{t_1}$.
Then the ideas used in~\eqref{eq:lin2} can be applied twice.

By combining the results of~\eqref{eq:linK}, \eqref{eq:lin}, and~\eqref{eq:lin2}, it is shown that $\xi$ can be factored out of $\tilde k_1$,
\begin{align} \label{eq:Ergebnis_Duffy_3d3d_Vertex}
\begin{split} 
   I_{t_1,t_2} 
    = \frac{1}{5-2s} \,  \int_{[0,1]^5} \tilde k_{1,V}(\eta) \ud \eta,\quad
    \tilde k_{1,V}(\eta):= [ \tilde k_1(\D_1(\eta)) + \tilde k_1(\D_2(\eta))]\,  \deta\J(\eta).
 \end{split}   
\end{align}
Since the integral relative to $\xi$ can be integrated analytically, only a five-dimensional integral has to be integrated numerically.
The regularity of the remaining integrand will be proved in Lemma~\ref{lem:analyt_3d3d_Tetra}.

\subsubsection{Singularity along an edge}\label{sec:duffy3d3dedge}
For the next case, we assume that the tetrahedra $t_1$ and $t_2$ have a common edge.
Without loss of generality the mappings $\chi_{t_1}$ and $\chi_{t_2}$ are chosen in such a way that $a_{t_1} = a_{t_2}$ and $b_{t_1} = b_{t_2}$.
This leads to $\chi_{t_1}(\kappa\, e_1) = \chi_{t_2}(\kappa\, e_1)$ for all $\kappa \in [0,1]$.
Therefore, $\tilde k_1$ is singular, if and only if $\tx_1 = \ty_1$ and if the remaining variables are zero.
We introduce local coordinates to describe the singularity along the edge as a five-dimensional point singularity.
For the first domain choose $\tz \in \R^5$ as $\tz_1 := \ty_1 - \tx_1$, $\tz_i := \ty_i$, $\tz_{i+2} := \tx_i$,  $i = 2,3$,
and for the second domain choose $\tz \in \R^5$ as $\tz_1 := \tx_1 - \ty_1$, $\tz_i := \tx_i$, $\tz_{i+2} := \ty_i$, $i = 2,3$.
Using the same splitting of the domain as in the previous case, \eqref{eq:splitup_3d3dvertex} becomes 
\begin{align*}
\begin{split}
 &\left\{ 
             0      \leq \tx_1  \leq 1,
             0      \leq \tz_4  \leq \tx_1,
             0      \leq \tz_5  \leq \tx_1 - \tz_4,
             -\tx_1 \leq \tz_1  \leq 0,
    	     0      \leq \tz_2  \leq \tx_1 + \tz_1,
	     0      \leq \tz_3  \leq \tx_1 + \tz_1 - \tz_2 
         \right\} \cup \\
 &\left\{ 
		  0     \leq \ty_1  \leq 1,
		  0     \leq \tz_4  \leq \ty_1,   
		  0     \leq \tz_5  \leq \ty_1 - \tz_4,  
		 -\ty_1 \leq \tz_1  \leq 0,
		  0     \leq \tz_2  \leq \ty_1 + \tz_1,
		  0     \leq \tz_3  \leq \ty_1 + \tz_1 - \tz_2		  
 \right\}.
\end{split}	  
\end{align*}
The integration order does not have to be changed, 
because the singularity is already at the boundary of each domain.
The next step is to introduce an ordered set of variables.
The first sub-domain yields the following forking inequality chains 
\[ 
   0 \leq -\tz_1 \leq -\tz_1 + \tz_2 \leq -\tz_1 + \tz_2 + \tz_3 \leq \tx_1 \leq 1 \quad \text{and} \quad 0 \leq \tz_4 \leq \tz_4 + \tz_5 \leq \tx_1 \leq 1.
\]
The forking inequality chain splits after the first chain link $\tx_1$. However, the set for which $\tilde k_1$ is singular is characterized by~$\tilde z=0$.
Therefore, we have to divide the sub-domain again to push the splitting one link further.
This is done by applying the same ideas as in~\eqref{eq:splitup_3d3dvertex} to obtain
\[
   -\tz_1 + \tz_2 \leq -\tz_1 + \tz_2 + \tz_3 \leq \tz_4 + \tz_5 \quad \text{and} \quad \tz_4 \leq \tz_4 + \tz_5 \leq -\tz_1 + \tz_2 + \tz_3.
\]
 Using the symmetry between the two sub-domains, the Duffy transformation can be simplified to
 \[
  I_{t_1,t_2} = \int_{[0,1]^6} \sum_{m=1}^4 \tilde k_1(\D_m(\xi,\eta))\, \deta\J_m(\xi,\eta) \ud \xi \ud \eta,     
 \]
 where $\D_m$ are again the Duffy transformation of the $m$-th sub-domain and $\D_3 = \D_1^T$, $\D_4 = \D_2^T$ due to symmetry.
 If we take a closer look at the $\D_m$, one can notice these transformations have a structure which can be used to simplify the integrals
 \begin{align} \label{eq:simplify_D_edge} 
 \begin{split}
   \D_m(\eta,\xi) &= (\D_m^1(\xi,\eta), \D_m^2(\xi,\eta)) = ( \xi_1\, e_1 + \xi_1\,\xi_2\, \D_m^1(\eta), \xi_1\, e_1 + \xi_1\,\xi_2\, \D_m^1(\eta))  \\
                  &= \xi_1\, (e_1,e_1) + \xi_1\,\xi_2\, \D_m(\eta),
 \end{split}  
 \end{align}
 where 
 \[
   \D_1(\eta) = \left(\begin{bmatrix}  0                      \\ \eta_1                      \\ 1 - \eta_1           \end{bmatrix},
		      \begin{bmatrix}  -\eta_2\eta_3\eta_4 \\ \eta_2\eta_3\,(1  - \eta_4) \\ \eta_2( 1 - \eta_3)  \end{bmatrix} \right), \quad
   \D_2(\eta) = \left(\begin{bmatrix}  0              \\ \eta_3\eta_4      \\ \eta_3( 1-  \eta_4)    \end{bmatrix},
		      \begin{bmatrix}  -\eta_1\eta_2 \\ \eta_1( 1 -\eta_2)  \\  1 - \eta_1             \end{bmatrix}    \right).	  
 \]
 Notice that we can almost use the same arguments as in~\eqref{eq:Ergebnis_Duffy_3d3d_Vertex}
 to factor $\xi_1$ and $\xi_2$ out of $\tilde k_1$. Additionally, we use that $\fie_i|_{[a_{t_1},b_{t_1}]} = 0$ due to $\chi_{t_1}(\xi_1\,e_1) \subset [a_{t_1},b_{t_1}]$. 
 Thus, we obtain 
 \begin{align}\label{eq:Ergebnis_Duffy_3d3d_Edge}
 \begin{split}
  I_{t_1,t_2}
       = \frac{1}{5-2s}\,\frac{1}{4-2s}\, \int_{[0,1]^4} \tilde k_{1,E}(\eta) \ud\eta,\quad
       \tilde k_{1,E}(\eta):=\sum_{m=1}^4\, \tilde k_1(\D_m(\eta))\, \deta\J_m(\eta).
 \end{split}     
 \end{align}
 As one can see, after the Duffy transformation only a four-dimensional integral has to integrated numerically instead of a six-dimensional one.
 The regularity of the remaining integrand will be shown in Lemma~\ref{lem:analyt_3d3d_Tetra}.

 \subsubsection{Singularity along a face} \label{sec:duffy3d3dface}
 Let $t_1$ and $t_2$ have a common face $\tau$.
 Without loss of generality the mappings $\chi_{t_1}$ and $\chi_{t_2}$ are chosen in such a way that $a_{t_1} = a_{t_2}$, $b_{t_1} = b_{t_2}$, and $c_{t_1} = c_{t_2}$.
 This leads to 
 \[
    \chi_{t_1}(\kappa_1\,e_1+\kappa_2\,e_2) = \chi_{t_2}(\kappa_1\,e_1+\kappa_2\,e_2)  \; \text{ for all } \kappa_1,\kappa_2 \in [0,1].
 \]
 Therefore, $\tilde k_1$ is singular if $\tx_i = \ty_i$ for $i=1,2$ and $\tx_3 = 0 = \ty_3$.
 First, we introduce local variables $\tz \in \R^4$ to describe the singularity as a four-dimensional point singularity,
 $\tz_i := \ty_i - \tx_i$, $i=1,2$, $\tz_3 := \ty_3$ and $\tz_4 := \tx_3$.
  Hence, we obtain by starting with~\eqref{eq:TT_start}
\begin{equation*}
  I_{t_1,t_2} = \int_0^1 \int_0^{\tx_1}  
  \int_{-\tx_1}^{1-\tx_1} \int_{-\tx_2}^{\tz_1 + \tx_1 - \tx_2} \int_{0}^{\tz_1-\tz_2 + \tx_1 - \tx_2} \int_0^{\tx_1 - \tx_2}
  \tk_1\left(   \begin{bmatrix} \tx_1 \\ \tx_2 \\ \tz_4 \end{bmatrix},
            \begin{bmatrix} \tz_1 + \tx_1 \\ \tz_2 + \tx_2  \\ \tz_3 \end{bmatrix}\right) \ud  \tz\ud  \tx_{1:2}.
\end{equation*}
 Since we want to apply the ideas from the example at the beginning of Section~\ref{sec:Duffy_Trafo}, 
 the singularity has to be shifted to the boundary of the integration domain by splitting it up.
 To simplify this procedure, the order of integration w.r.t.\ $\tx_{1:2}$ and $\tz_{1:3}$ is reversed, which results in
\[
  \left\{ \begin{array} {rcl}
   -1                                                                       &\leq \tz_1  \leq& 1 \\
   \max\{-1,\,-1+\tz_1\}                                                    &\leq \tz_2  \leq& \text{min}\{1,\,1+\tz_1\} \\
    0                                                                       &\leq \tz_3  \leq& \text{min}\{1,\,1+\tz_1\} + \text{min}\{0,\, -\tz_2\} \\
    \text{max} \{ 0,\, -\tz_2,\,-\tz_1 + \tz_3,\,  \tz_2 - \tz_1 + \tz_3\}  &\leq \tx_1  \leq& \text{min} \{ 1,\, 1 - \tz_1\}\\
    \text{max} \{ 0,\, -\tz_2\}                                             &\leq \tx_2  \leq& \tx_1 + \text{min} \{0 ,\, \tz_1 - \tz_2 - \tz_3\} \\
    0                                                                       &\leq \tz_4  \leq& \tx_1 - \tx_2
   \end{array} \right\}.
\]
 The following procedure is similar to the two singularity cases before. 
The domain is subdivided to push the point singularity to the boundary. 
In total, we split the set into nine sub-domains to resolve the min and max constraints. 
Furthermore, eight of these domains have to be refined again to satisfy a suitable forking inequality chain in the same way as in Section \ref{sec:duffy3d3dedge}. 
Here, only the final result after the non-linear transformation to the six-dimensional unit cube is presented.
In the same sense as in~\eqref{eq:simplify_D_vertex} and~\eqref{eq:simplify_D_edge}
the non-linear transformations~$\D_m$ for the domains share a common structure which will be used to simplify them:
 \begin{equation}\label{eq:simplify_D_face}
 \begin{split}
    \D_m(\xi,\eta) &= ( \xi_1\,e_1 + \xi_1[1-\xi_2]\,e_2 + \xi_1\xi_2\xi_3\,\D_m^1(\eta), 
                        \xi_1\,e_1 + \xi_1[1-\xi_2]\,e_2 + \xi_1\xi_2\xi_3\,\D_m^2(\eta)) \\
                   &= \xi_1\,(e_1,e_1) + \xi_1[1-\xi_2]\,(e_2,e_2) + \xi_1\xi_2\xi_3\,\D_m(\eta)
 \end{split}   
 \end{equation}
 with
 \begin{alignat*}{2}
  \D_1(\eta) &= \left( \begin{bmatrix}  0                      \\ \eta_1                 \\ 1 - \eta_1                     \end{bmatrix},
                       \begin{bmatrix}  -\eta_1\eta_2\eta_3 \\ 0                      \\ \eta_1\eta_2 ( 1 - \eta_3) \end{bmatrix} \right), \quad 
  &&\D_2(\eta)  = \left( \begin{bmatrix}  0                      \\ \eta_1\eta_2           \\ \eta_1 (1 - \eta_2)          \end{bmatrix},
                       \begin{bmatrix}  -\eta_1\eta_2\eta_3 \\ 0                      \\  1 - \eta_1\eta_2\eta_3    \end{bmatrix} \right), \\                       
  \D_3(\eta) &= \left( \begin{bmatrix}  0                      \\ \eta_1\eta_2         \\ 1 - \eta_1\eta_2             \end{bmatrix},
                       \begin{bmatrix}  -\eta_1\eta_2\eta_3 \\ 0                      \\ \eta_1 ( 1 - \eta_2\,\eta_3) \end{bmatrix} \right), \quad
  &&\D_4(\eta)  = \left( \begin{bmatrix}  0                      \\ \eta_1\eta_2\eta_3 \\ \eta_1\eta_2 (1 - \eta_3)  \end{bmatrix},
                       \begin{bmatrix}  -\eta_1                 \\ 0                      \\  1 - \eta_1                    \end{bmatrix} \right), \\ 
  \D_5(\eta) &= \left( \begin{bmatrix}  0                      \\ \eta_1\eta_2\eta_3 \\ \eta_1 (1 - \eta_2\,\eta_3)  \end{bmatrix},
                       \begin{bmatrix}  -\eta_1\eta_2         \\ 0                      \\  1 - \eta_1\,\eta_2            \end{bmatrix} \right), \quad
  &&\D_6(\eta)  = \left( \begin{bmatrix}  0                      \\ \eta_1\eta_2\eta_3 \\ 1 - \eta_1\eta_2\eta_3    \end{bmatrix},
                       \begin{bmatrix}  -\eta_1\eta_2         \\ 0                      \\ \eta_1(1 - \eta_2)          \end{bmatrix} \right), \\ 
  \D_7(\eta) &= \left( \begin{bmatrix}  0                      \\ 0                      \\ 1                              \end{bmatrix},
                       \begin{bmatrix}  -\eta_1\eta_2\eta_3 \\ \eta_1\eta_2(1- \eta_3)      \\ \eta_1(1 - \eta_2)          \end{bmatrix} \right), \quad
  &&\D_{15}(\eta) = \left( \begin{bmatrix} -\eta_1\eta_2\eta_3 \\ \eta_1\eta_2( 1 - \eta_3)  \\ 1 - \eta_1\eta_2   \end{bmatrix},
                       \begin{bmatrix}  0                         \\ 0                            \\ \eta_1                 \end{bmatrix} \right).
 \end{alignat*}
 The transformations $\D_8$ to $\D_{14}$ are the symmetric versions of $\D_1$ to $\D_7$, i.e.\ $\D_{7+i}(\eta) = \D_i^T(\eta)$ for $i=1,\ldots,7$.
The remaining two are described by
\begin{align*}
  \D_{16}(\eta) &= \left( \begin{bmatrix}  0                \\ 0                 \\ \eta_3      \end{bmatrix},
                          \begin{bmatrix}  -\eta_1\eta_2   \\ \eta_1(1-\eta_2)  \\ 1 - \eta_1  \end{bmatrix} \right) \quad \text{and} \quad
  \D_{17}(\eta)  = \left( \begin{bmatrix} -\eta_1\eta_2   \\ \eta_1(1-\eta_2)  \\ 1 - \eta_1  \end{bmatrix},
                          \begin{bmatrix}  0                \\ 0                 \\ \eta_1\eta_3      \end{bmatrix} \right).
\end{align*}  
For the corresponding Jacobi determinants, it holds that $\deta\J_m(\xi,\eta) = \deta\J(\xi)\, \deta\J_m(\eta)$ for $m=1,\ldots,17$,
where $\deta\J(\xi) = \xi_1^5\xi_2^4\xi_3^3$ and $\deta\J_m(\eta)=\eta_1^2\eta_2,$   $m=1, \ldots,15$.
The remaining two cases are $\deta\J_{16}(\eta) = \eta_1$ and $ \deta\J_{17}(\eta) =\eta_1^2.$
All in all, the Duffy transformation can now be applied
\begin{align} \label{eq:Ergebnis_Duffy_3d3d_Face}
\begin{split}
   I_{t_1,t_2}      
      = \prod_{j=0}^2 \frac{1}{5-j-2s}  \int_{[0,1]^3} \tilde k_{1,F}(\eta) \ud\eta,\quad
      \tilde k_{1,F}(\eta):=\sum_{m=1}^{17}\, \tilde k_1(\D_m(\eta))\, \deta\J_m(\eta).
\end{split}      
\end{align}
Again, we used the linearity of the basis functions and the transformation between the reference tetrahedron and the original tetrahedron to simplify
the Duffy transformation. Since the procedure is completely analogous to the case of the singularity along an edge, we omit the details. 
The only difference is to consider that in the case of a single vanishing term,
it holds that $\chi_{t_1}(\kappa_1\,e_1+\kappa_2\,e_2) \in \tau$ and $\fie_i|_\tau = 0$.
As one can see, after the Duffy transformation only a three-dimensional integral has to be integrated numerically instead of a six-dimensional one.
 The regularity of the remaining integrand will be shown in Lemma~\ref{lem:analyt_3d3d_Tetra}.

\subsubsection{Two identical tetrahedra}\label{sec:Duffy_3d3d_Tetra}
We now deal with the strongest singularity, i.e.\ the two tetrahedra coincide. 
Since $t_1$ equals $t_2$, we drop the index and $\tilde k_1$ is singular if and only if  $\tx = \ty$.
Starting from~\eqref{eq:TT_start} we introduce local variables $\tz \in \R^3, \tz := \ty - \tx$, to describe the singularity as a three-dimensional point singularity, which leads to
\begin{equation*}
  I_{t,t} = \int_0^1 \int_0^{\tx_1} \int_0^{\tx_1 - \tx_2} 
  \int_{-\tx_1}^{1-\tx_1} \int_{-\tx_2}^{\tz_1 + \tx_1 - \tx_2} \int_{-\tx_3}^{\tz_1-\tz_2 +\tx_1- \tx_2-\tx_3} 
  \tilde k_1(  \tx, \tz + \tx) \ud  \tz\ud  \tx.
\end{equation*}
Then the point singularity is moved to the boundary by swapping the integration order and splitting up the integration domain:
\[ \left\{ \begin{array} {rcl}
   -1                                                                           &\leq \tz_1  \leq& 1 \\
   -1 + \text{max}\{0,\,\tz_1\}                                                 &\leq \tz_2  \leq& 1 + \text{min}\{0,\,\tz_1\}\\
   -1 + \text{max}\{0,\,\tz_1\} + \text{max}\{0,\,-\tz_2\}                      &\leq \tz_3  \leq& 1 + \text{min}\{0,\,\tz_1\} + \text{min}\{0,\,-\tz_2\}\\
    \text{max} \{ 0,\, -\tz_1,\, - \tz_2,\, -\tz_3, \quad                       &                & \\ 
    \, -\tz_2-\tz_3,\, \tz_2 -\tz_1,\, -\tz_1 + \tz_3,\, \tz_2 -\tz_1 + \tz_3\} &\leq \tx_1  \leq& 1 + \text{min}\{0 ,\, -\tz_1\} \\
    \text{max} \{ 0,\, -\tz_2\}                                                 &\leq \tx_2  \leq& \tx_1 + \text{min} \{0 ,\, \tz_1 - \tz_2, \, \tz_1 - \tz_2 -\tz_3\} \\
    \text{max} \{ 0,\, -\tz_3\}                                                 &\leq \tx_3  \leq& \tx_1 - \tx_2 + \text{min} \{ 0,\, \tz_1 -\tz_2 - \tz_3\}
   \end{array} \right\}. \] 
 The split helps to resolve the min and max conditions.  
 All in all, we obtain $18$ integration domains each of which satisfies a forking inequality chain.
 This ultimately leads to:
\begin{align} \label{eq:Ergebnis_Duffy_3d3d_Tetra}
\begin{split}
   I_{t,t} &= \int_{[0,1]^4} \int_{[0,1]^2}  \sum_{m=1}^{18} \tilde k_1(\D_m(\xi,\eta))\, \deta\J_m(\xi,\eta) \ud \eta \ud \xi \\
           & =2\prod_{l=0}^3 \frac{1}{5-l-2s} \int_{[0,1]^2} \tilde k_{1,T}(\eta) \ud\eta,\quad
           \tilde k_{1,T}(\eta):=\sum_{m=1}^{9} \tilde k_1(\D_m(\eta))\, \deta\J_m(\eta),
\end{split}      
\end{align}
with $\D_m(\xi,\eta) = \xi_1\xi_2\xi_3\xi_4\, \D_m(\eta)$ for $m =1\ldots,18$,
where
\begin{align*}
\D_1(\eta)   &=  \left(\begin{bmatrix} 0        \\ \eta_1             \\  - \eta_1                       \end{bmatrix},
                         \begin{bmatrix} -\eta_1\eta_2 \\ 0                    \\  - 1                       \end{bmatrix}\right), 
\D_2(\eta)    =  \left(\begin{bmatrix} 0        \\ 1             \\ 0                       \end{bmatrix},
                         \begin{bmatrix} -\eta_1\eta_2 \\ 0                    \\ \eta_1(1 - \eta_2) \end{bmatrix}\right), 
\D_3(\eta)   =  \left(\begin{bmatrix} 0        \\ \eta_1             \\ - 1                       \end{bmatrix},
                         \begin{bmatrix} -\eta_1\eta_2 \\ 0                    \\ - \eta_1\eta_2                       \end{bmatrix}\right), \\ 
\D_4(\eta)    &=  \left(\begin{bmatrix} 0        \\ \eta_1\eta_2             \\ - \eta_1\eta_2                       \end{bmatrix},
                         \begin{bmatrix} -\eta_1 \\ 0                    \\ - 1                       \end{bmatrix}\right),  
\D_5(\eta)   =  \left(\begin{bmatrix} 0        \\ \eta_1\eta_2             \\ \eta_1(1 - \eta_2) \end{bmatrix},
                         \begin{bmatrix} -1        \\ 0                    \\   0                  \end{bmatrix}\right),
\D_6(\eta)    =  \left(\begin{bmatrix} 0        \\ \eta_1\eta_2             \\ - 1                       \end{bmatrix},
                         \begin{bmatrix} -\eta_1 \\ 0                    \\ - \eta_1                       \end{bmatrix}\right), \\     
\D_7(\eta)   &=  \left(\begin{bmatrix} 0        \\ 0                    \\ 0                                  \end{bmatrix},
                         \begin{bmatrix} -\eta_1\eta_2 \\ \eta_1(1 - \eta_2)  \\ - 1                       \end{bmatrix}\right),
\D_8(\eta)    =  \left(\begin{bmatrix} 0        \\ 0                    \\ - 1                       \end{bmatrix},
                         \begin{bmatrix} -\eta_1\eta_2 \\ \eta_1(1 - \eta_2)  \\ - \eta_1                       \end{bmatrix} \right), 
\D_9(\eta)   =  \left(\begin{bmatrix} 0        \\ 0                    \\ \eta_1            \end{bmatrix},
                         \begin{bmatrix} -\eta_2 \\ 1 - \eta_2         \\ 0                       \end{bmatrix}\right).                       
\end{align*}
The factor $2$ in \eqref{eq:Ergebnis_Duffy_3d3d_Tetra} is due to the symmetry. 
Since $\D_{m+9}(\eta) = \D_m(\eta)^T$ for $m=1,\ldots,9$ and since $t_1$ equals $t_2$, the corresponding mappings are identical, too.
Moreover, the differences of the linear basis functions do not vanish. 
Therefore, the simplifications of the $\D_m$ and the first step in~\eqref{eq:Ergebnis_Duffy_3d3d_Tetra} are easily seen.
As one can see, after the Duffy transformation only a two-dimensional integral has to be integrated numerically instead of a six-dimensional one.
 The regularity of the remaining integrand will be shown in Lemma~\ref{lem:analyt_3d3d_Tetra}.

\subsection{Interaction between a tetrahedron and a panel}\label{sec:Duffy_3d2d}
The procedure here is similar to the procedure in Section~\ref{sec:Duffy_3d3d}. 
The main difference is that the interaction between a tetrahedron $t$ and a panel $\tau$ has to be examined.
There are three different cases: a common point, a common edge or the panel $\tau$ is a face of the tetrahedron $t$.
Again, to describe the resulting singularity types, $t$ and $\tau$ will be mapped to the corresponding reference element.
For the panel this the unit panel $\tilde \tau:= \{ \tilde y \in \R^2: 0 \leq \ty_2 \leq \ty_1,\, 0\leq \ty_1\leq 1 \}$.
The transformation will be done by the mapping $\chi_\tau: \tilde \tau \to \tau$,
\begin{equation}\label{eq:chi_tau}
  \chi_\tau(\ty) = M_\tau \ty + a_\tau,
\end{equation}
where $M_\tau := [b_\tau - a_\tau, c_\tau-b_\tau] \in \R^{3\times 2}$ and $a_\tau,b_\tau$, and $c_\tau$ denote the vertices of the panel $\tau$.
Then \eqref{int_3d2d} reads
\begin{equation} \label{eq:TP_start}
 I_{t,\tau} = \int_{\tilde t} \int_{\tilde \tau} \tilde k_2(\tx,\ty) \ud \ty \ud \tx, \quad
 \tilde k_2(\tx,\ty):=k_2(\chi_t(\tx),\chi_\tau(\ty))\, |\deta M_t|\, |(b_\tau - a_\tau) \times (c_\tau-b_\tau)|.
\end{equation}
In the following, the canonical unit vectors of $\R^2$ are denoted by $\hat e_1$ and $\hat e_2$.

\subsubsection{Point singularity} \label{sec:Duffy_3d2d_Vertex}
We start again with the simplest case that the tetrahedron and the triangle have a common vertex. 
Without loss of generality the linear mappings $\chi_t $ and $\chi_\tau$ are chosen in such a way that $a_t = a_\tau$.
 Since the procedure is analogous to the procedure in Section~\ref{sec:Duffy_3d3d_Vertex}, only the result of the Duffy transformation is presented.
 We obtain two sub-domains and for the integrals this implies that
 \[
   I_{t,\tau} = \int_{[0,1]} \int_{[0,1]^4} \sum_{m=1}^2 \tilde k_2( \D_m(\xi,\eta))\, \deta\J_m(\xi,\eta) \ud \eta \ud \xi,
 \]
 where again $\D_m(\xi,\eta) = \xi\,(\D_m^1(\eta),\D_m^2(\eta))=\xi\,\D_m(\eta)$, $m=1,2$, is the Duffy transformation for the $m$-th sub-domain with
 \begin{align*}
   \D_1(\eta) = \left( \begin{bmatrix} 1 \\ \eta_1\eta_2 \\ \eta_1(1-\eta_2) \end{bmatrix},
                       \begin{bmatrix} \eta_3 \\ \eta_3\eta_4 \end{bmatrix} \right)\; \text{ and } \;
   \D_2(\eta) = \left( \begin{bmatrix} \eta_2 \\ \eta_2\eta_3\eta_4 \\ \eta_2\eta_3(1-\eta_4) \end{bmatrix},
                       \begin{bmatrix} 1 \\ \eta_1 \end{bmatrix} \right),                     
 \end{align*}
 and where $\deta \J_m(\xi,\eta) = \xi^4\, \deta \J_m(\eta)$ with  $\deta \J_1(\eta) = \eta_1\eta_3$ and $\deta\J_2(\eta) = \eta_2^2\eta_3$.
 This can be seen similarly to \eqref{eq:simplify_D_vertex}.
 In contrast to Section~\ref{sec:Duffy_3d3d} we now have consider a different integrand~$\tilde k_2$.
 It is actually easier to examine $\tilde k_2$ because we do not have to distinguish several cases.
 First, consider the difference of the integration variables
 \begin{align*}
   y-x = \chi_\tau(\ty) - \chi_t(\tx) = M_\tau \ty - M_t \tx = \xi\, ( M_\tau \D_m^2(\eta) - M_t \D_m^1(\eta) ).
 \end{align*}
 For the inner integrand of~\eqref{eq:TP_start} this means
 \begin{align*}
  \frac{(y-x)^Tn_\tau}{|y-x|^{3+2s}} = \xi^{-2-2s} \, \frac{(M_\tau \D_m^2(\eta) - M_t \D_m^1(\eta))^T n_\tau}{|M_\tau \D_m^2(\eta) - M_t \D_m^1(\eta)|^{3+2s}},\quad m=1,2.
 \end{align*}
 The second part is analogous to the approach in Section~\ref{sec:Duffy_3d3d}. 
 Since $\tau \subset \partial(\supp\,\fie_i \cup \supp\,\fie_j)$, it follows without loss of generality that $\fie_i(a_t) = 0$ and
 \[
  \fie_i(x) = \alpha_{i,1:3}^T\, M_t \tx + \alpha_{i,4} a_t =  \alpha_{i,1:3}^T\, M_t \D_m^2(\xi,\eta) + \fie_i(a_t) = \xi\,\alpha_{i,1:3}^T\, M_t \D_m^2(\eta), \quad m=1,2.
 \]
 By combining these results, we have shown that $\xi$ can be factored out of $\tilde k_2$:
 \begin{equation}\label{eq:Ergebnis_Duffy_3d2d_Vertex}
    I_{t,\tau} = \frac{1}{4-2s}\, \int_{[0,1]^4} \tilde k_{2,V}(\eta) \ud \eta,\quad
    \tilde k_{2,V}(\eta):=\sum_{m=1}^2\, \tilde k_2(\D_m(\eta))\, \deta\J_m(\eta).
\end{equation} 
 Since the integral w.r.t.\ $\xi$ can be integrated analytically, only a four-dimensional integral has to be integrated numerically.
The regularity of the remaining integrand will be proved in Lemma~\ref{lem:analyt_3d2d_Face}.

\subsubsection{Singularity along an edge}\label{sec:Duffy_3d2d_Edge}
We consider the case that $t$ and $\tau$ share a common edge.
Adapting the approach of Section~\ref{sec:duffy3d3dedge}, we obtain four sub-domains:
\[
 I_{t,\tau} = \int_{[0,1]^5} \sum_{m=1}^4 \tilde k_2(\D_m(\xi,\eta))\, \deta\J_m(\xi,\eta) \ud \xi \ud \eta    
\]
with $\D_m$ being the Duffy-transformations on the corresponding domains
and $\deta \J_m$ denoting the Jacobi determinant of the $m$-th sub-domain,
$\deta\J_m(\xi,\eta) = \xi_1^4\xi_2^3\eta_2 =: \deta\J(\xi)\, \deta\J_m(\eta)$, $m=1,2,3,$
and 
 $\deta\J_4(\xi,\eta) =  \xi_1^4\xi_2^3\eta_1^2\,\eta_2 =: \deta\J(\xi)\, \deta\J_4(\eta)$.
Using the same idea as in~\eqref{eq:simplify_D_edge}, we obtain
\[
   \D_m(\eta,\xi) = (\D_m^1(\xi,\eta), \D_m^2(\xi,\eta)) = ( \xi_1\, e_1 + \xi_1\xi_2\, \D_m^1(\eta), \xi_1\, \hat e_1 + \xi_1\xi_2\, \D_m^1(\eta)) 
                  =: \xi_1\, (e_1,\hat e_1) + \xi_1\xi_2\, \D_m(\eta)
 \]
with
\begin{alignat*}{2}
   \D_1(\eta) &= \left( \begin{bmatrix} 0 \\ \eta_1     \\ 1 - \eta_1 \end{bmatrix},
                       \begin{bmatrix} -\eta_2\eta_3  \\ \eta_2(1 - \eta_3) \end{bmatrix} \right), \quad
   &&\D_2(\eta) = \left( \begin{bmatrix} 0 \\ \eta_2\eta_3  \\ \eta_2(1-\eta_3)  \end{bmatrix},
		       \begin{bmatrix} -\eta_1  \\ 1 - \eta_1       \end{bmatrix} \right), \\
   \D_3(\eta) &= \left( \begin{bmatrix} -\eta_2\eta_3  \\ \eta_2(1 - \eta_3)  \\ 1 - \eta_2  \end{bmatrix},
     		       \begin{bmatrix} 0                                       \\ \eta_1 \end{bmatrix} \right), \quad
   &&\D_4(\eta) = \left( \begin{bmatrix} -\eta_1\eta_2\eta_3 \\ \eta_1\eta_2(1- \eta_3) \\ \eta_1(1- \eta_2) \end{bmatrix},
                       \begin{bmatrix} 0  \\ 1               \end{bmatrix} \right).     
\end{alignat*}
Applying the tricks presented in the previous sections, $\xi_1$ and $\xi_2$ can be factored out of $\tilde k_2$.
This results in  
\begin{equation}\label{eq:Ergebnis_Duffy_3d2d_Edge}
 I_{t,\tau} =  \frac{1}{5-2s}\,\frac{1}{4-2s}\,\int_{[0,1]^3}  \tilde k_{2,E}(\eta) \ud \eta,\quad
 \tilde k_{2,E}(\eta):= \sum_{m=1}^4  \tilde k_2(\D_m(\eta))\, \deta\J_m(\eta).
\end{equation}
Hence, only a three-dimensional integral has to be integrated numerically.
The regularity of the remaining integrand will be proved in Lemma~\ref{lem:analyt_3d2d_Face}.

\subsubsection{Singularity along a face}\label{sec:Duffy_3d2d_Face}
The last case to consider is that the panel is a face of the tetrahedron.
Without loss of generality we assume that $a_t = a_\tau$, $b_t=b_\tau$, and $c_t = c_\tau$. 
This means that $\chi_t(\kappa_1\,e_1+\kappa_2\,e_2) = \chi_\tau(\kappa_1\,\hat e_1+\kappa_2\, \hat e_2)$ for all $\kappa_1,\kappa_2 \in [0,1]^2$.
In order to lift the singularity we obtain nine domains:
\begin{align*}
 I_{t,\tau}=  \int_{[0,1]^5} \sum_{m=1}^9 \tilde k_2(\D_m(\xi,\eta))\, \deta\J_m(\xi,\eta) \ud \eta \ud \xi,   
\end{align*}
where we can apply the idea from~\eqref{eq:simplify_D_face} to
 \begin{align*}
    \D_m(\xi,\eta) &= ( \xi_1\,e_1 + \xi_1(1-\xi_2)\,e_2 + \xi_1\xi_2\xi_3\, \D_m^1(\eta), 
                        \xi_1\,\hat e_1 + \xi_1(1-\xi_2)\, \hat e_2 + \xi_1\xi_2\xi_3\, \D_m^2(\eta)) \\
                   &= \xi_1\,(e_1,\hat e_1) + \xi_1(1-\xi_2)\,(e_2,\hat e_2) + \xi_1\xi_2\xi_3\, \D_m(\eta) 
 \end{align*}
with 
\begin{alignat*}{3}
  \D_1(\eta) &= \left(\begin{bmatrix}  0 \\  \eta_1   \\ 1 - \eta_1  \end{bmatrix},
		       \begin{bmatrix}  -\eta_1\eta_2 \\ 0                                        \end{bmatrix} \right), 
  &&\D_2(\eta)  = \left( \begin{bmatrix}  0 \\ \eta_1\eta_2   \\ \eta_1 (1 - \eta_2)  \end{bmatrix},
		       \begin{bmatrix}  -1  \\ 0                                        \end{bmatrix} \right), 
  &&\D_3(\eta) = \left( \begin{bmatrix}  0  \\ \eta_1\eta_2   \\ 1 - \eta_1\eta_2   \end{bmatrix},
		       \begin{bmatrix}  -\eta_1  \\ 0                                \end{bmatrix} \right), \\
  \D_4(\eta)  &= \left( \begin{bmatrix}  0  \\ 0         \\ 1 \end{bmatrix},
		       \begin{bmatrix}  -\eta_1\eta_2  \\ \eta_1(1 - \eta_2) \end{bmatrix} \right), 
  &&\D_5(\eta) = \left( \begin{bmatrix}  -\eta_1\eta_2  \\ \eta_1(1 - \eta_2)         \\ 1 -\eta_1 \end{bmatrix},
		       \begin{bmatrix}  0  \\ 0 \end{bmatrix} \right),  
  &&\D_6(\eta)  = \left( \begin{bmatrix}  -\eta_1  \\ 0         \\ 1 -\eta_1 \end{bmatrix},
		       \begin{bmatrix}  0  \\ \eta_1\eta_2 \end{bmatrix} \right),\\
  \D_7(\eta) &= \left( \begin{bmatrix} -\eta_1\eta_2  \\ 0     \\ \eta_1(1- \eta_2) \end{bmatrix},
		       \begin{bmatrix} 0               \\ 1  \end{bmatrix} \right), 
  &&\D_8(\eta)  = \left( \begin{bmatrix} -\eta_1\eta_2  \\ 0     \\ 1 - \eta_1\eta_2 \end{bmatrix},
		       \begin{bmatrix} 0               \\ \eta_1  \end{bmatrix} \right),
  &&\D_9(\eta) = \left( \begin{bmatrix} 0               \\ 0      \\ \eta_2  \end{bmatrix},
		       \begin{bmatrix} -\eta_1  \\ 1 - \eta_1  \end{bmatrix} \right).   
\end{alignat*}
 The integral can be simplified similarly to the case described in Section~\ref{sec:duffy3d3dface}.
 This results in 
 \begin{equation}\label{eq:Ergebnis_Duffy_3d2d_Face}
I_{t,\tau} = \prod_{j=0}^2 \frac{1}{5-j-2s} \,  \int_{[0,1]^2} \tilde k_{2,F}(\eta)\ud \eta,\quad
\tilde k_{2,F}(\eta):=\sum_{m=1}^9\, \tilde k_2(\D_m(\eta))\, \deta\J_m(\eta). 
\end{equation}
Hence, only a two-dimensional integral has to be integrated numerically instead of a five-dimensional one.
The regularity of the remaining integrand will be proved in Lemma~\ref{lem:analyt_3d2d_Face}.
\section{Error Estimates for the Integrals} \label{sec:Error_Estimates}
\subsection{Derivative free error estimates}
All previous transformations result in integrals over multi-dimensional unit cubes.
In the following, we present quadrature rules and the corresponding error estimates.
 We follow the approach of Sauter and Schwab~\cite{Sauter} and use derivate-free estimates, which 
are based on the work of Davis~\cite{Davis}.
By $\mathcal{E}^\rho_{a,b} \subset \mathbb{C}$ we denote the closed ellipse with focal points~$a,b\in\R$. 
$\rho =: \bar{a} + \bar{b}$ refers to the sum of the semimajor half-axis $\bar{a} > (b-a)/2$ and 
and the semininor half-axis~$\bar{b}$. The standard ellipse~$\mathcal{E}_{0,1}^\rho$ is abbreviated with~$\mathcal{E}_\rho$.
Furthermore, let $k\in \N$ and let $f:[0,1]^k\to \C$ be a function. 
Then we denote by $I[f]$ the integral of $f$ over $[0,1]^k$ and
by $Q^n[f]$ its approximation with a tensor Gauss quadrature of order~$n \in \N^k$, i.e.,
in dimension $i\in\{1,\dots,k\}$ we use a Gaussian quadrature rule with $n_i$ points.
Moreover, we define the quadrature error $E^n[f]:= |I[f]- Q^n[f]|$.

\begin{theorem}\label{thm:error_int_1d}
Let $f:[0,1]\to \mathbb{C}$ be analytic with an analytic extension $f^\star$ to $\mathcal{E}_\rho$ for some $\rho > \frac{1}{2}$.
Then
\[        E^n[f] \leq c\,(2\rho)^{-2n}\, \max_{z\in \partial \mathcal{E}_\rho} |f^\star(z)|.
\]
\end{theorem}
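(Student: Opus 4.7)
The plan is to reduce the quadrature estimate to a best polynomial approximation problem and then invoke a classical Bernstein-type bound. The starting point is the defining property of the $n$-point Gauss--Legendre rule on $[0,1]$: it integrates every polynomial of degree at most $2n-1$ exactly. Thus for any polynomial $p$ of degree $\leq 2n-1$,
\[
   E^n[f] = |I[f-p]-Q^n[f-p]| \leq 2\,\|f-p\|_{L^\infty([0,1])},
\]
because the Gauss weights are nonnegative with total mass $1$, so both $|I[\cdot]|$ and $|Q^n[\cdot]|$ are bounded by $\|\cdot\|_{L^\infty([0,1])}$. It therefore suffices to control the uniform best approximation of $f$ by polynomials of degree $\leq 2n-1$ on $[0,1]$.

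For this, I would apply the affine change of variables $t=2x-1$, which maps $[0,1]$ to $[-1,1]$ and sends $\mathcal{E}_\rho$ to the standard Bernstein ellipse with foci $\pm1$ and sum of semi-axes $2\rho$. The assumption $\rho>\tfrac12$ translates into the familiar admissibility condition $2\rho>1$. A classical Chebyshev-expansion argument in the style of Davis then yields, for every $m\in\N$,
\[
   \inf_{\deg p\leq m}\|f-p\|_{L^\infty([0,1])} \leq \frac{C}{2\rho-1}\,(2\rho)^{-m}\,\max_{z\in\partial\mathcal{E}_\rho}|f^\star(z)|.
\]
The derivation proceeds by expanding the pulled-back function in Chebyshev polynomials $T_m$, representing the coefficients by contour integrals over the Bernstein ellipse, and using the growth estimate $|T_m(z)|\leq (2\rho)^m$ on that ellipse, which follows directly from the Joukowski parametrization $z=(w+w^{-1})/2$ with $|w|=2\rho$.

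Combining the two steps with $m=2n-1$ gives $E^n[f]\leq c'\,(2\rho)^{-(2n-1)}\max_{\partial\mathcal{E}_\rho}|f^\star|$, and absorbing the harmless factor $2\rho$ into the constant produces the stated bound with exponent $2n$. The genuine technical content is the Bernstein estimate: one has to control the tail of the Chebyshev expansion uniformly on $[-1,1]$ via the exponential decay of its coefficients for functions admitting an analytic extension to the Bernstein ellipse, and to keep careful track of the constants arising from the geometric series in $(2\rho)^{-1}$. The rest of the argument -- Gauss exactness, the affine normalization of the ellipse, and the nonnegativity of the Gauss weights -- is essentially bookkeeping.
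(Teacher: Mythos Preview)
Your argument is correct and is precisely the classical route: exactness of the $n$-point Gauss rule for polynomials of degree $\leq 2n-1$ combined with Bernstein's approximation theorem for functions analytic in the Bernstein ellipse. The paper does not supply its own proof but defers to \cite{Sauter} (which in turn builds on Davis~\cite{Davis}); the proof given there follows the same two-step scheme you outline, so your proposal matches the intended approach.
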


In order to generalize the previous theorem to a tensor Gauss quadrature, we need the following definition.
\begin{definition}\label{def:int} 
  Let $X := [0,1]^k \subset \R^k$.
  A function $f: X \to \C$ is called \emph{componentwise analytic} if for $i=1,\dots,k$ there exists $\rho_i > 1/2$
  such that for all $x \in X$ the function
  \begin{equation*}
         f_{i, x}: [0,1] \to \C, \quad f_{i, x}(t) := f(x_1,\ldots,x_{i-1},t,x_{i+1},\ldots,x_k),
  \end{equation*}
  can be extended to an analytic function $f^*_{i, x}: \mathcal{E}_{\rho_i} \to \C$.
\end{definition}
With this definition we can extend Theorem~\ref{thm:error_int_1d} to the multi-dimensional case.

\begin{theorem} \label{thm:error_int_dd}
  Let $f:[0,1]^k \to \C$ be componentwise analytic and let $\rho\in \R^k$ be as in Definition~\ref{def:int}. 
  Then the quadrature error with $n_i$ nodes in dimension~$i=1,\dots,k$ satisfies
  \begin{equation*}
    E^n[f]  \leq \sum_{i=1}^k \max_{ x \in [0,1]^k}  E^{n_i}[f_{i,x}] 
           \leq \sum_{i=1}^k c_i\,(2\rho_i)^{-2n_i} \max_{x \in [0,1]^k} \max_{z \in \partial \mathcal{E}_{\rho_i}} \lvert f^*_{i,x}(z) \rvert.
  \end{equation*}
  The constants $c_i$, $1\leq i \leq k$, are as in Theorem~\ref{thm:error_int_1d}.
\end{theorem}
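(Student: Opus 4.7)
The plan is to telescope the quadrature error across the $k$ coordinate directions, reducing the multi-dimensional estimate to $k$ applications of Theorem~\ref{thm:error_int_1d}. Writing $I_j$ for the exact integration operator in the $j$-th variable and $Q_j^{n_j}$ for the corresponding univariate $n_j$-point Gauss rule, so that $I = I_1\cdots I_k$ and $Q^n = Q_1^{n_1}\cdots Q_k^{n_k}$, I would first decompose
\begin{equation*}
   I[f] - Q^n[f] = \sum_{i=1}^k A_i, \qquad A_i := Q_1^{n_1}\cdots Q_{i-1}^{n_{i-1}}\,(I_i - Q_i^{n_i})\,I_{i+1}\cdots I_k[f].
\end{equation*}

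For a fixed $i$, the outer operator $Q_1^{n_1}\cdots Q_{i-1}^{n_{i-1}}$ is a tensor product of univariate Gauss rules whose weights are positive and sum to one, so it is a convex combination of point evaluations over $[0,1]^{i-1}$; applying Fubini to push the remaining exact integrals $I_{i+1}\cdots I_k$ through the univariate error operator $(I_i - Q_i^{n_i})$, one obtains
\begin{equation*}
   |A_i| \leq \max_{x \in [0,1]^{i-1}} \left| \int_{[0,1]^{k-i}} (I_i - Q_i^{n_i})\bigl[ f(x,\cdot,y) \bigr]\,\ud y \right| \leq \max_{z \in [0,1]^k} E^{n_i}[f_{i,z}].
\end{equation*}
The last step applies the triangle inequality to pull the absolute value inside the remaining integral and bounds the resulting univariate error by its supremum over all fixed choices of the other coordinates. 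Summation over $i$ yields the first inequality of the theorem.

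For the second inequality, fix any $z\in[0,1]^k$ and apply Theorem~\ref{thm:error_int_1d} to the univariate function $f_{i,z}$, whose analytic extension to $\mathcal{E}_{\rho_i}$ is guaranteed by Definition~\ref{def:int}. This gives $E^{n_i}[f_{i,z}] \leq c_i\,(2\rho_i)^{-2n_i}\,\max_{w \in \partial\mathcal{E}_{\rho_i}} |f^*_{i,z}(w)|$, and taking the maximum over $z\in[0,1]^k$ followed by summation over $i$ produces the stated estimate. The only subtlety is the Fubini-style exchange pushing the finite-sum Gauss rule past the integrals in the later coordinates; this is routine because componentwise analyticity on $[0,1]^k$ implies continuity, hence integrability, on the compact cube, so there is no real obstacle beyond careful bookkeeping of the telescoping decomposition.
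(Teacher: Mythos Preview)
Your telescoping argument is correct and is precisely the standard proof; the paper itself does not give a proof but simply refers to \cite{Sauter}, where the same decomposition $I-Q^n=\sum_i Q_1^{n_1}\cdots Q_{i-1}^{n_{i-1}}(I_i-Q_i^{n_i})I_{i+1}\cdots I_k$ followed by the positivity of the Gauss weights and Theorem~\ref{thm:error_int_1d} is used. There is nothing to add.
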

The proofs of Theorem~\ref{thm:error_int_1d} and~\ref{thm:error_int_dd} can be find in \cite{Sauter}.
Our main goal is to verify the requirements of Theorem~\ref{thm:error_int_dd} 
for each of the cases in Section~\ref{sec:Duffy_Trafo}.

\subsection{Preparation}

\subsubsection{Analyticity of the integrands} \label{sec:Analyticity}
In order to apply Theorem \ref{thm:error_int_dd} to our cases \eqref{int_3d3d} and \eqref{int_3d2d},
we have to prove that for each case the integrand is componentwise analytic.
For this we also need assumptions on the geometry.

\begin{assumption}
 \begin{enumerate} Let us assume that
  \item For all $t \in \T$ there is $c_A>0$ and $x_0\in\bar t$ such that
          $|x-y| \geq c_{A}\, ( |x-x_0|+ |x_0-y|)$ for all $x\in\Omega\setminus t$ and $y\in t$.
  \item For all $t_1, t_2 \in \T$ whose intersection consists of a common vertex $p$,
        there exists $c_{A}>0$ such that $|x-y| \geq c_{A}\,( |x-p| + |p-y|)$ for $x \in t_1$, $y\in t_2$.
  \item For all $t_1, t_2 \in \T$ with exactly one common edge $E:=\bar t_1 \cap \bar t_2$, 
        there exists $c_{A}>0$ and a point $p \in E$ such that
        $|x-y| \geq c_{A}\,( |x-p| + |p-y|)$ for all $x \in t_1$, $y\in t_2$.  
  \item For all $t_1, t_2 \in \T$ with exactly one common face $F:=\bar t_1 \cap \bar t_2$, 
        there exists $c_{A}>0$ and a point $p \in F$ such that
        $|x-y| \geq c_{A}\,( |x-p| + |p-y|)$ for all $x \in t_1$, $y\in t_2$.        
 \end{enumerate} \label{assumpt:xi}
\end{assumption}
A similar assumption is made for the combination of tetrahedra in $\T$ and panels in $\Pa$. 
\begin{assumption} 
 \begin{enumerate} Let us assume that
  \item For all $t \in \T$ there is $c_{A}>0$ and $x_0 \in \bar t$ such that 
		$|x-y| \geq c_{A}\, ( |x-x_0|+ |x_0-y|)$ for all $y \in \tau \subset \Pa\backslash \bar{t}$ and $x \in t$.
  \item For all $t \in \T$ and $\tau \in\Pa$ whose intersection consists of a common vertex $p$,
        there is $c_{A}>0$  such that
        $|x-y| \geq c_{A}( |x-p| + |p-y|)$ for all $x \in t$, $y\in \tau$.
  \item For all $t \in \T$ and $\tau \in\Pa$ with exactly one common edge $E:=\bar t \cap \bar \tau$, 
        there exists $c_{A}>0$  and a point $p \in E$ such that
        $|x-y| \geq c_{A}( |x-p| + |p-y|)$ for all $x \in t$, $y\in \tau$.
  \item For all $t \in \T$ and $\tau \in\Pa$ with exactly one common face $F:=\bar t \cap \bar \tau = \bar \tau,$ 
        there is $c_{A}>0$  and a point $p \in \bar \tau$ such that
        $|x-y| \geq c_{A}( |x-p| + |p-y|)$ for all $x \in t$, $y\in \tau$.                  
 \end{enumerate} 
\end{assumption}
In addition, we also need information about the scaling behavior of the linear mappings $\chi_t$ and $\chi_\tau$.
\begin{lemma}[Proposition 5.3.8 in \cite{Sauter}]\label{lem:mapPanel}
 Let $M_\tau$ be as in \eqref{eq:chi_tau} and let $\lambda_{\max}$ and $\lambda_{\min}$
be the largest and the smallest eigenvalue of $M_\tau^TM_\tau$, respectively.
 Then
 \[ c\, h_\tau^2 \leq \lambda_{\min} \leq \lambda_{\max} \leq 2\, h_\tau^2 \quad \text { and } \quad
    c\, h_\tau^2 \leq \sqrt{\deta M_\tau^T M_\tau} \leq C\, h_\tau^2 \]
 with constants $c,C > 0 $ that depend only on the minimal interior angle $\theta_\tau$ of the panel $\tau$.   
\end{lemma}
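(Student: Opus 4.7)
The plan is to reduce the four inequalities to estimates on the edge lengths and the area of~$\tau$, using the singular value identity $\det(M_\tau^T M_\tau)=\lambda_{\min}\lambda_{\max}$ together with the shape-regularity hypothesis. The upper bound on $\lambda_{\max}$ is immediate: since $\lambda_{\max}+\lambda_{\min}=\text{trace}(M_\tau^T M_\tau)=\norm{M_\tau}_F^2$, we obtain
\[
   \lambda_{\max} \leq \norm{M_\tau}_F^2 = |b_\tau-a_\tau|^2 + |c_\tau-b_\tau|^2 \leq 2\, h_\tau^2,
\]
because both edges $[a_\tau,b_\tau]$ and $[b_\tau,c_\tau]$ of $\tau$ have length at most the diameter~$h_\tau$.

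For the determinant I would use the Gram identity
\[
   \sqrt{\det(M_\tau^T M_\tau)} \;=\; |(b_\tau-a_\tau)\times(c_\tau-b_\tau)| \;=\; 2\,\vol(\tau).
\]
The upper bound $2\,\vol(\tau)\le h_\tau^2$ follows by bounding the area by half the product of two edges. For the lower bound I would order the angles of $\tau$ as $\alpha\le\beta\le\gamma$ with opposite sides $a\le b\le c$; then $c=h_\tau$, since the diameter of a triangle equals its longest side, and $\alpha\ge\theta_\tau$ by assumption. The law of sines gives $a=h_\tau\,\sin\alpha/\sin\gamma\ge h_\tau\,\sin\theta_\tau$, and since $\alpha+\beta+\gamma=\pi$ forces $\beta\in[\theta_\tau,\pi-\theta_\tau]$ we also obtain $\sin\beta\ge\sin\theta_\tau$. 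The altitude to the side of length~$c$ then equals $a\sin\beta\ge h_\tau\sin^2\theta_\tau$, so that $\vol(\tau)\ge \tfrac12\, h_\tau^2\sin^2\theta_\tau$.

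Combining these estimates yields $\det(M_\tau^T M_\tau)\ge h_\tau^4\sin^4\theta_\tau$, and the eigenvalue identity produces
\[
   \lambda_{\min} \;=\; \frac{\det(M_\tau^T M_\tau)}{\lambda_{\max}} \;\geq\; \frac{\sin^4\theta_\tau}{2}\, h_\tau^2,
\]
which completes all four bounds with constants depending only on $\theta_\tau$. The only nontrivial ingredient is the geometric lower bound on $\vol(\tau)$ in terms of $h_\tau^2$; the remaining estimates follow directly from the singular value decomposition of $M_\tau$ and elementary norm inequalities.
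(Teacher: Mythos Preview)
Your argument is correct. The paper does not give its own proof of this lemma; it simply cites Proposition~5.3.8 in Sauter--Schwab and states the result. So there is nothing in the paper to compare against directly.

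For context, the paper does prove the tetrahedral analogue (Lemma~\ref{lem:mapTetra}) immediately afterwards, and the structure there differs from yours: the upper eigenvalue bound is obtained via Young's inequality applied to $(M_t\xi,M_t\xi)$ (which amounts to your Frobenius-norm argument), while the lower bound on $\lambda_{\min}$ is derived from the insphere radius, $\lambda_{\min}\ge \rho_t/h_{\tilde t}$, together with volume/surface estimates from \cite{Richardson}. Your route via the singular-value identity $\lambda_{\min}=\det(M_\tau^TM_\tau)/\lambda_{\max}$ combined with an explicit area lower bound $\vol(\tau)\ge\tfrac12 h_\tau^2\sin^2\theta_\tau$ is more self-contained and yields explicit constants; the insphere approach generalises more readily to higher-dimensional simplices but relies on auxiliary geometric references. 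Either approach is perfectly adequate for the triangle case.
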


\begin{lemma}\label{lem:mapTetra}
 Let $M_t$ be as in \eqref{eq:chi_t} and let $\lambda_{\max}$ and $\lambda_{\min}$ be the largest and smallest eigenvalue of $M_t^TM_t$, respectively.
 Then 
 \[ c\, h_t^2 \leq \lambda_{\min} \leq \lambda_{\max} \leq 3\, h_t^2 \quad \text { and } \quad
    c\, h_t^3 \leq \sqrt{\deta M_t^TM_t} \leq C\, h_t^3 \]
 with constants $c,C > 0 $ that depend only on the minimal interior angle $\theta_t$ of the tetrahedron $t$.   
\end{lemma}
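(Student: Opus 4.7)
The plan is to mirror the proof of Lemma~\ref{lem:mapPanel} (Proposition~5.3.8 in~\cite{Sauter}) with the adaptations required by the three-dimensional geometry. The upper bounds should be straightforward from the definition of $M_t$, and the lower bounds will be extracted from shape regularity, passing through the determinant identity $|\det M_t| = 6\,\vol(t)$.

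First I would treat the upper bound on $\lambda_{\max}$. Each of the three columns of $M_t=[b_t-a_t,\,c_t-b_t,\,d_t-b_t]$ is a difference of two vertices of $t$ and hence has Euclidean norm bounded by the diameter~$h_t$. Therefore
\[
   \lambda_{\max} \leq \|M_t\|_2^2 \leq \|M_t\|_F^2 \leq 3\,h_t^2,
\]
which is the stated upper bound. In the same spirit the upper bound $\sqrt{\det M_t^T M_t}=|\det M_t|=6\,\vol(t) \leq C\,h_t^3$ is immediate, since $t$ is contained in a ball of radius $h_t$ and any such tetrahedron has volume $\leq C h_t^3$.

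Next I would establish the lower bound $\sqrt{\det M_t^T M_t} \geq c\,h_t^3$ using the shape regularity assumption~\eqref{eq:theta_T}. The key ingredient is that for a tetrahedron whose minimal interior angle is bounded below by $\theta_\T>0$, one has $\vol(t) \geq c(\theta_\T)\,h_t^3$. This follows from the observation that the longest edge of $t$ has length comparable to $h_t$, and shape regularity forces the opposite vertex to be at distance $\geq c(\theta_\T)\,h_t$ from that edge, which in turn prevents the third altitude from collapsing. With $|\det M_t|=6\vol(t)$, this yields the desired lower bound on the determinant. This is the step I expect to be the main obstacle, since turning a lower bound on the interior angles into a volume bound needs a careful geometric argument; however, it is a standard consequence of shape regularity and I would either cite a classical reference for quasi-uniform tetrahedral meshes or argue directly via the altitudes.

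Finally, the lower bound $\lambda_{\min} \geq c\,h_t^2$ follows by combining the two preceding estimates. Indeed, writing $\mu_1 \leq \mu_2 \leq \mu_3$ for the eigenvalues of $M_t^T M_t$,
\[
   \mu_1\,\mu_2\,\mu_3 = \det M_t^T M_t \geq c\,h_t^6,
\]
and using $\mu_2,\mu_3 \leq \lambda_{\max} \leq 3\,h_t^2$ gives
\[
   \lambda_{\min}= \mu_1 \geq \frac{c\,h_t^6}{9\,h_t^4} = \frac{c}{9}\,h_t^2,
\]
which closes the argument. All hidden constants depend only on $\theta_t$, as required.
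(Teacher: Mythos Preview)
Your argument is correct and self-contained. The upper bound via the Frobenius norm is equivalent to the paper's Young-inequality computation, and your derivation of $\lambda_{\min}\geq c\,h_t^2$ from the determinant bound together with $\lambda_{\max}\leq 3h_t^2$ is clean.

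The paper takes a different route to the lower eigenvalue bound: instead of passing through the determinant, it invokes the inequality $\lambda_{\min}\geq \rho_t/h_{\tilde t}$ (citing~\cite{Dziuk}), where $\rho_t$ is the inradius of~$t$, and then bounds $\rho_t=3\,|t|/|\partial t|$ from below using the formula for the volume in terms of edge lengths and interior angles (adapted from~\cite{Richardson}). Your approach has the advantage of avoiding external citations and of producing the determinant bounds first, from which the eigenvalue lower bound follows by a one-line algebraic argument; the paper's approach attacks $\lambda_{\min}$ directly via the inscribed sphere and leaves the determinant estimates implicit. Both ultimately rest on the same geometric fact you flag as the crux, namely that shape regularity forces $\vol(t)\gtrsim h_t^3$.
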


\begin{proof}
 We set $v_1:= b_t - a_t$, $v_2:= c_t -b_t$, and $v_3 := d_t-b_t$ and denote by $\rho_t$ the radius of the insphere of $t$. 
 The upper bound on the eigenvalues of $M_t^TM_t$ results from the linearity of the scalar product and Young's inequality
 \[
    (M_t \xi, M_t \xi) = \sum_{i,j=1}^3 \xi_i\,\xi_j\, (v_i,v_j)
                       \leq 3\, \sum_{i=1}^3 \xi_i^2\, |v_i|^2 \leq 3\,  h_t^2\, |\xi|^2,   \quad \xi \in \R^3.
                       \]         
 According to \cite{Dziuk}, for the lower bound it holds that $\lambda_{\min} \geq \rho_t/h_{\tilde t}$. 
 Using elementary properties of the tetrahedron, the estimates in \cite{Richardson} can be adjusted to our needs:
 \[
  \lambda_{\min} \geq \rho_t\,h_{\tilde t}^{-1} = 3h_{\tilde t}^{-1} \frac{|t|}{|\partial t|} 
                 \geq 3h_{\tilde t}^{-1} \sin^2(\theta_t) \frac{|v_1||v_2||v_3|}{|\partial t|}
                 \geq \tilde c\, \sin^2(\theta_t)\, h_t.
 \]
 The last step is due to the assumption that the tetrahedra do not degenerate.
\end{proof}
Since we assumed in \eqref{eq:theta_T} that the minimum interior angles of the tetrahedra are bounded below by~$\theta_\T$, 
the Lemmas \ref{lem:mapPanel} and \ref{lem:mapTetra} show that there exists a constant $c_\T >0$
such that for each $t \in \T$ and for each $\tau \in \Pa$ it holds that
\begin{equation}\label{eq:c_T}
   |M_t x| \geq c_\T h_t\, |x|, \quad x \in \R^3,  \quad \text{ and } \quad 
   |M_\tau y| \geq c_\T h_\tau\, |y|, \quad y \in \R^2.
\end{equation}

\paragraph{Tetrahedron and tetrahedron}
With these assumptions we can prove that the integrands after the Duffy transformation are analytic in the corresponding integration domain.
The direct consequence of this is also that the Duffy transformation has lifted the singularity.
\begin{lemma}\label{lem:analyt_3d3d_Tetra} 
   The integrands $\tilde k_{1,T}$ in \eqref{eq:Ergebnis_Duffy_3d3d_Tetra}, $\tilde k_{1,F}$ in \eqref{eq:Ergebnis_Duffy_3d3d_Face},
   $\tilde k_{1,E}$, in \eqref{eq:Ergebnis_Duffy_3d3d_Edge}, and $\tilde k_{1,V}$ in \eqref{eq:Ergebnis_Duffy_3d3d_Vertex}
   are componentwise analytic for $s\in(0,1)$.
\end{lemma}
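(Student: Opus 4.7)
The plan is to exploit the fact that after the transformations of Section~\ref{sec:Duffy_3d3d}, every summand of $\tilde k_{1,T}$, $\tilde k_{1,F}$, $\tilde k_{1,E}$, and $\tilde k_{1,V}$ can be written in the form $P_m(\eta)\,Q_m(\eta)^{-(3+2s)/2}$, where $P_m$ collects the polynomial factors arising from the basis function evaluations at $\D_m$, from $\deta\J_m$, from $|M_{t_1}||M_{t_2}|$, and from the $\xi$-cancellations described in \eqref{eq:lin}--\eqref{eq:lin2}. The factor $Q_m(\eta)$ is the squared length of $M_{t_1}\D_m^1(\eta)-M_{t_2}\D_m^2(\eta)$ in the vertex case, and the squared length of the analogous $\xi$-independent remainder that appears via \eqref{eq:simplify_D_edge} and \eqref{eq:simplify_D_face} in the edge, face, and identical cases. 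Since $P_m$ is polynomial and hence entire in each variable, componentwise analyticity of $\tilde k_{1,*}$ reduces to componentwise analyticity of $Q_m^{-(3+2s)/2}$.

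To establish this I would proceed in two steps. First, I would show that $Q_m(\eta)>0$ on $[0,1]^k$ uniformly in the mesh. By construction of the Duffy splittings in Sections~\ref{sec:Duffy_3d3d_Vertex}--\ref{sec:Duffy_3d3d_Tetra}, the singular locus $\{x=y\}$ is confined to the face of the $\xi$-cube where the extracted $\xi$-factor vanishes, and therefore is not attained for $\eta\in[0,1]^k$. Applying the appropriate item of Assumption~\ref{assumpt:xi} (common vertex, edge, face, or identical) to the image points $\chi_{t_1}(\tx),\chi_{t_2}(\ty)$ and combining it with the lower scaling bound $|M_t v|\geq c_\T h_t|v|$ from \eqref{eq:c_T} yields
\[
 Q_m(\eta)\geq c\,h_{t_1}^2\,\widetilde Q_m(\eta),
\]
where $\widetilde Q_m$ is a polynomial bounded below on $[0,1]^k$ by a positive constant depending only on $\theta_\T$ and $c_A$.

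Second, I would extend each variable $\eta_i$ to a complex ellipse. Since $Q_m$ is a polynomial of mesh-independent degree in every $\eta_i$, the family $\{\eta_i\mapsto Q_m(\eta_1,\dots,\eta_i,\dots,\eta_k)\}_{\eta_{\neq i}\in[0,1]^{k-1}}$ is uniformly equicontinuous. Together with the real lower bound, a first-order Taylor expansion in $\operatorname{Im}\eta_i$ shows that $\operatorname{Re}Q_m(\eta)$ stays positive, so $Q_m$ avoids the branch cut $(-\infty,0]$, on $\mathcal{E}_{\rho_i}\times[0,1]^{k-1}$ for some $\rho_i>1/2$ that is independent of $\eta_{\neq i}$. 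Composing with the principal branch of $z\mapsto z^{-(3+2s)/2}$ then supplies the analytic extension $f^*_{i,\eta}$ required by Definition~\ref{def:int}.

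The main obstacle I anticipate is the case-by-case bookkeeping rather than the analyticity itself. For each of the four singularity types, and within each type for each sub-domain index $m$ and each of the three sub-cases of vanishing basis function differences, one has to verify (i) that the cancellations advertised in \eqref{eq:lin}--\eqref{eq:lin2} (and the analogues alluded to between \eqref{eq:simplify_D_edge} and \eqref{eq:Ergebnis_Duffy_3d3d_Face}) really leave $P_m$ polynomial in $\eta$, and (ii) that the specific polynomial $M_{t_1}\D_m^1(\eta)-M_{t_2}\D_m^2(\eta)$ (or its edge/face/identical counterpart) has no real zero on $[0,1]^k$ beyond the one that was cancelled. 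Once that inventory is complete, the analytic extension is a soft and uniform consequence of the polynomial structure of $Q_m$ combined with the positivity estimate.
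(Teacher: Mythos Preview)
Your proposal is correct and follows essentially the same route as the paper. The paper likewise splits each summand into a polynomial numerator and the denominator $|M_{t_1}\D_m^1(\eta)-M_{t_2}\D_m^2(\eta)|^{3+2s}$, invokes the lower bound~\eqref{eq:c_T} (and, for the non-identical cases, Assumption~\ref{assumpt:xi}) to reduce matters to a mesh-independent distance polynomial, and then concludes componentwise analyticity from the absence of real zeros on the cube. The only notable difference is that where you invoke a soft compactness argument for the strict positivity of $\widetilde Q_m$, the paper actually writes out $\dist_t^2(\D_m(\eta))$ (resp.\ $\dist_\tau^2(\D_m(\eta))$) for each sub-domain and checks by inspection that it is a sum of squares at least one of which is bounded away from zero on $[0,1]^k$; this is precisely the ``case-by-case bookkeeping'' you anticipated. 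One small imprecision: for the identical-tetrahedra case there is no relevant item in Assumption~\ref{assumpt:xi}, and none is needed---the bound $|M_t v|\geq c_\T h_t|v|$ from~\eqref{eq:c_T} suffices directly, as the paper does.
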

\begin{proof}
  Since in the case of the integrand $\tilde k_{1,T}$ the two tetrahedra $t_1$ and $t_2$ are identical, we drop the index. 
  For each sub-domain, we have to consider three terms: the Jacobi determinant of the Duffy transformation, the product of the linear basis functions and the denominator.
  The first two can obviously  be   extended analytically with respect to all variables in a complex neighborhood of $[0,1]^2$ as 
  they are polynomials w.r.t.\ $\eta$.  For the denominator we obtain
  \begin{equation}\label{eq:Dtt}
    | M_t (\D_m^2(\eta) - \D_m^1(\eta))| \geq c_\T h_t |\D_m^2(\eta) - \D_m^1(\eta)| =: c_\T h_t \,\dist_t(\D_m(\eta)),\quad m=1,\ldots,9,
  \end{equation}
where $c_\T$ is defined in \eqref{eq:c_T}.
  Therefore, only the distance has to be examined for each domain:
  \begin{alignat*}{2}
     \dist_t^2(\D_1(\eta)) &= \eta_1^2(1 + \eta_2^2) + (1-\eta_1)^2 > 0,   &&\quad       
     \dist_t^2(\D_2(\eta))^2 = \eta_1^2\eta_2^2 + 1 + \eta_1^2(1-\eta_2)^2 > 0, \\
     \dist_t^2(\D_3(\eta)) &= \eta_1^2( 1 + \eta_2^2) +  (1-\eta_1\eta_2)^2 > 0, &&\quad
     \dist_t^2(\D_4(\eta)) = \eta_1^2( 1 + \eta_2^2) +  (1-\eta_1\eta_2)^2 > 0, \\
     \dist_t^2(\D_5(\eta)) &= 1 + \eta_1^2\eta_2^2 + \eta_1^2(1-\eta_2)^2 > 0, &&\quad
     \dist_t^2(\D_6(\eta)) = \eta_1^2( 1 + \,\eta_2^2) +  (1-\eta_1)^2 > 0, \\
     \dist_t^2(\D_7(\eta)) &= \eta_1^2\eta_2^2 + \eta_1^2( 1 - \eta_2)^2 +  1 > 0, &&\quad     
     \dist_t^2(\D_8(\eta)) = \eta_1^2\eta_2^2 + \eta_1^2 ( 1 - \eta_2)^2 +  (1-\eta_1)^2 > 0, \\
     \dist_t^2(\D_9(\eta)) &= \eta_1^2 + (1-\eta_2)^2 + \eta_2^2    > 0.
  \end{alignat*}
  Notice, due to symmetry it is enough to only consider the first $9$ sub-domains.
  Since the denominator has no zeros in $[0,1]^2$ and consists of analytic functions,
  the whole integrand $\tilde k_{1,T}$ can be analytically extended with respect to all variables in a complex neighborhood of $[0,1]^2$ for $s\in(0,1)$.

  The main difference for the integrand $\tilde k_{1,F}$ is the estimation of the denominator.
  The intersection between the tetrahedra $t_1$ and $t_2$ is a common face $\tau=\bar t_1 \cap \bar t_2$. 
  Without loss of generality we assume that the parametrizations $\chi_{t_1}$ and $\chi_{t_2}$ satisfy the relation
  \[
      \chi_{t_1}( \kappa) = \chi_{t_2}(\kappa)
  \]
  with $\kappa:=( \kappa_1, \kappa_2, 0)^T$ for all $\kappa_1,\kappa_2 \in [0,1]$.
  By Assumption \ref{assumpt:xi}, there exists a point $p$ on the common face $\tau$ with $p = \chi_{t_1}(\kappa)$, and it holds that
  \begin{align*}
     |\chi_{t_2}(\D_m^2(\eta)) - \chi_{t_1}(\D_m^1(\eta))|                  
                     &\geq c_{A}\, (| M_{t_2}(\D_m^2(\eta) - \kappa) | + | M_{t_1}(\kappa - \D_m^1(\eta))|) \\
                     &\geq c\,c_{A}\,c_\T h_t \left[ \sum_{i=1}^2 (\D_{m,i}^2(\eta)-\kappa_i)^2 + (\kappa_i - \D_{m,i}^1)^2 + \D_{m,3}^1(\eta)^2 + \D_{m,3}^2(\eta)^2\right]^{1/2} \\
                     &\geq \frac{c}{2} h_t \left[ \sum_{i=1}^2 (\D_{m,i}^2(\eta)-\D_{m,i}^1(\eta))^2 + \D_{m,3}^1(\eta)^2 + \D_{m,3}^2(\eta)^2\right]^{1/2} \\
                     &=: \frac{c}{2}h_t\, \dist_\tau(\D_m(\eta)),
  \end{align*}
  where $c_\T$ is defined in \eqref{eq:c_T}.
  Therefore, only the $\dist_\tau(\D_m(\eta))$ has to be examined for each domain.
  Since the treatment of $\dist_\tau(\D_m(\eta))$ is analogous to the treatment of $\dist_t(\D_m(\eta))$, we consider only the first sub-domain:
  \[
     |\dist_\tau(\D_1(\eta))|^2 = \eta_1^2( 1+ \eta_2^2[ \eta_3^2 + (1-\eta_3)^2]) + (1-\eta_1)^2  > 0.  
  \]
  The remaining sub-domains can be estimated analogously.
  Since the denominator has no zeros in~$[0,1]^3$ and consists of analytic functions,
  the whole integrand $\tilde k_{1,F}$ can be analytically extended with respect to all variables in a complex neighborhood of $[0,1]^3$ for $s\in(0,1)$.

  Similar arguments apply in the case of the $\tilde k_{1,E}$ and $\tilde k_{1,V}$.
\end{proof}

\paragraph{Tetrahedron and Panel}
In this section, the intersection between a tetrahedron and a panel is investigated.
The proofs of the lemmas are omitted as the structure and the ideas of the proofs are analogous to the proofs for $\tilde k_{1,F}$ and $\tilde k_{1,V}$.

\begin{lemma}\label{lem:analyt_3d2d_Face} 
The integrands $\tilde k_{2,F}$ in \eqref{eq:Ergebnis_Duffy_3d2d_Face}, $\tilde k_{2,E}$ in \eqref{eq:Ergebnis_Duffy_3d2d_Edge}, and $\tilde k_{2,V}$ in \eqref{eq:Ergebnis_Duffy_3d2d_Vertex} are componentwise analytic for $s\in(0,1)$.
\end{lemma}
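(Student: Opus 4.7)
The plan is to mirror the proof of Lemma~\ref{lem:analyt_3d3d_Tetra}. For each of the three singularity configurations (common vertex, common edge, common face) and for every sub-domain index~$m$, the integrand splits into three factors: the Jacobi determinant $\deta\J_m(\eta)$, a polynomial factor arising from $\fie_i\,\fie_j$ together with the numerator $(y-x)^T n_\tau$, and the singular denominator $|x-y|^{-(3+2s)}$ evaluated at $x=\chi_t(\D_m^1(\eta))$ and $y=\chi_\tau(\D_m^2(\eta))$. Once the powers of $\xi$ have been factored out and integrated analytically as in \eqref{eq:Ergebnis_Duffy_3d2d_Vertex}, \eqref{eq:Ergebnis_Duffy_3d2d_Edge} and \eqref{eq:Ergebnis_Duffy_3d2d_Face}, the first two factors become polynomials in~$\eta$ and are therefore entire in each~$\eta_i$; the whole question reduces to analyticity of the denominator.

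For the denominator, I would produce, for each sub-domain~$m$, a real-valued distance function $\dist_\tau(\D_m(\eta))$ satisfying
\[
|\chi_t(\D_m^1(\eta))-\chi_\tau(\D_m^2(\eta))| \;\geq\; c\,h_t\,\dist_\tau(\D_m(\eta))
\]
with $c>0$ depending only on the shape-regularity constant~$\theta_\T$. The bound is obtained exactly as in Lemma~\ref{lem:analyt_3d3d_Tetra}: Assumption~2 supplies a point~$p$ on the common sub-simplex together with $|x-y|\geq c_A(|x-p|+|p-y|)$; Lemmas~\ref{lem:mapTetra} and~\ref{lem:mapPanel}, combined with \eqref{eq:c_T} and \eqref{assum:quasi_uniform_h}, convert $|M_t(\cdot)|$ and $|M_\tau(\cdot)|$ into Euclidean norms on the reference elements with comparable scales $h_t\simeq h_\tau$; the elementary inequalities $a+b\geq\sqrt{a^2+b^2}$ and $a^2+b^2\geq\tfrac12(a-b)^2$ then collect the contributions into a sum of squares of polynomials in~$\eta$.

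The main obstacle, and the only genuinely computational step, will be the case-by-case verification that $\dist_\tau(\D_m(\eta))^2$ has no real zeros on $[0,1]^k$ for each of the $2$ sub-domains of $\tilde k_{2,V}$, the $4$ of $\tilde k_{2,E}$ and the $9$ of $\tilde k_{2,F}$. Inspecting the explicit expressions for $\D_1,\ldots,\D_9$ in Section~\ref{sec:Duffy_3d2d_Face} (and the analogues in Sections~\ref{sec:Duffy_3d2d_Vertex} and~\ref{sec:Duffy_3d2d_Edge}) shows that in every sub-domain at least one summand of the form $1$, $(1-\eta_1)^2$, or $\eta_1^2+(1-\eta_2)^2$ forces strict positivity on the closed unit cube. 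By continuity the positive lower bound persists on a small complex neighbourhood, so $|x-y|^{-(3+2s)}$ admits an analytic extension to an ellipse $\mathcal{E}_{\rho_i}$ in each coordinate~$\eta_i$ for every $s\in(0,1)$. Combined with the polynomial nature of the other two factors, this will yield componentwise analyticity of $\tilde k_{2,V}$, $\tilde k_{2,E}$ and $\tilde k_{2,F}$ in the sense of Definition~\ref{def:int}.
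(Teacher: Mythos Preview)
Your proposal is correct and follows exactly the route the paper intends: the paper omits the proof of this lemma entirely, stating only that ``the structure and the ideas of the proofs are analogous to the proofs for $\tilde k_{1,F}$ and $\tilde k_{1,V}$,'' and your outline is precisely that analogy, with the appropriate substitutions (Assumption~2 in place of Assumption~\ref{assumpt:xi}, and both Lemmas~\ref{lem:mapPanel} and~\ref{lem:mapTetra} together with quasi-uniformity to compare $h_t$ and $h_\tau$). The only additional ingredient you flag beyond Lemma~\ref{lem:analyt_3d3d_Tetra} is the extra polynomial numerator $(y-x)^T n_\tau$, which is indeed harmless for analyticity; your case-by-case positivity check of $\dist_\tau(\D_m(\eta))^2$ on the closed cube is the computational core, just as in the paper's proof for $\tilde k_{1,T}$ and $\tilde k_{1,F}$.
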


\subsubsection{Estimation of the integrals}
Since it was shown in Section~\ref{sec:Analyticity} that all regularized integrands are analytic, 
the requirements of Theorem~\ref{thm:error_int_dd} are fulfilled. 
In order to apply the latter theorem, the sum of the half-axes $\rho$ of the ellipse has to be estimated for each case.
Additionally, we need a notation to describe the analytic extension onto the ellipses w.r.t.\ to the integration domain.
We define for $\rho>0$ and $i \in \{1,\ldots,k\}$
\[
    \mathcal{E}_{\rho}^{(i)} := [0,1]^{i-1} \times \mathcal{E}_{\rho} \times [0,1]^{k-i} \subset \R^k.
\]

\paragraph{Tetrahedron and Tetrahedron}
First, we consider the different interactions between two tetrahedra. 
In contrast to \cite{Sauter}, instead of using spherical coordinates we use the Duffy transformation directly.
The next lemma gives the desired error estimate for the case that the two tetrahedra are identical.
\begin{lemma}\label{lem:error_est_integrand_3d3d_Tetra}
 There exists a constant $\rho> \frac12$ that depends only on $\theta_\T$ such that the integrand $\tk_{1,T}$ in~\eqref{eq:Ergebnis_Duffy_3d3d_Tetra}
 can be analytically extended to $\bigcup_{i=1}^2 \dellipse_{\rho}^{(i)}$.
 It holds that
 \[
    \sup_{\eta \in \dellipse_{\rho}^{(i)}}   |\tk_{1,T}(\eta)| \leq C\, h^{3-2s}, \quad i=1,2.
 \]
\end{lemma}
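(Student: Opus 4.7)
The plan is to reduce the lemma to three ingredients already prepared in the paper: (i) the componentwise analyticity from Lemma~\ref{lem:analyt_3d3d_Tetra}, (ii) the scaling of $M_t$ from Lemma~\ref{lem:mapTetra} together with the estimate~\eqref{eq:c_T}, and (iii) the explicit strict positivity of the distance polynomials $\dist_t(\D_m(\eta))$ listed in the proof of Lemma~\ref{lem:analyt_3d3d_Tetra}.

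First, I would fix the radius. From \eqref{eq:Dtt} the denominator of each summand is bounded below by $c_\T h_t\,\dist_t(\D_m(\eta))$, and the nine expressions $\dist_t^2(\D_m(\eta))$ are explicit quadratic polynomials in $(\eta_1,\eta_2)$, strictly positive on $[0,1]^2$. Since they are polynomials with real coefficients independent of $h$, a direct continuity/compactness argument produces some $\rho_0>1/2$ such that $|\dist_t(\D_m(\eta))|\geq c>0$ uniformly on $\dellipse_{\rho_0}^{(1)}\cup\dellipse_{\rho_0}^{(2)}$ for every $m=1,\ldots,9$, where the constant $c$ depends only on $\theta_\T$ through \eqref{eq:c_T}. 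One would simply check the nine quadratics case by case, choosing $\rho_0$ as the smallest admissible value. This is the main bookkeeping step and the principal obstacle: it is elementary but must be done for each sub-domain, since a single zero in the complexified distance would destroy the analytic extension.

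Second, I would estimate the three factors appearing in each summand $\tk_1(\D_m(\eta))\,\deta\J_m(\eta)$ of $\tk_{1,T}$. The Jacobi determinants $\deta\J_m(\eta)$ are polynomials in $\eta$ of bounded degree, hence uniformly bounded on $\dellipse_{\rho_0}^{(i)}$ by a constant independent of $h$. The prefactor $|M_{t_1}||M_{t_2}|=|M_t|^2$ is bounded by $Ch^6$ thanks to Lemma~\ref{lem:mapTetra}. For the basis function differences, using $\fie_\star(x)-\fie_\star(y)=\alpha_{\star,1:3}^T M_t(\tx-\ty)$ together with the extraction of the $\xi$-factors as in~\eqref{eq:lin} leaves $\alpha_{\star,1:3}^T M_t \D_m(\eta)$; since $|\alpha_{\star,1:3}|\leq Ch^{-1}$ for nodal linear basis functions and $\|M_t\|\leq Ch$ by Lemma~\ref{lem:mapTetra}, and since $|\D_m(\eta)|$ is bounded on $\dellipse_{\rho_0}^{(i)}$, each factor contributes $O(1)$.

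Third, the denominator $|M_t(\D_m^2(\eta)-\D_m^1(\eta))|^{-(3+2s)}$ is controlled by $(c_\T h_t)^{-(3+2s)}\,|\dist_t(\D_m(\eta))|^{-(3+2s)}$, which by the first step is bounded by $Ch^{-(3+2s)}$ uniformly on $\dellipse_{\rho_0}^{(i)}$. Combining the three contributions yields
\[
   |\tk_1(\D_m(\eta))\,\deta\J_m(\eta)|\leq C\,h^6\cdot h^{-(3+2s)} = C\,h^{3-2s}
\]
on $\dellipse_{\rho_0}^{(1)}\cup\dellipse_{\rho_0}^{(2)}$, and summing over $m=1,\ldots,9$ gives the claimed sup bound with $\rho:=\rho_0$. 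By the quasi-uniformity~\eqref{assum:quasi_uniform_h} one may replace $h_t$ by $h$ in the final constant, completing the plan.
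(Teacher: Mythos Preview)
Your overall strategy mirrors the paper's proof closely: use the analyticity from Lemma~\ref{lem:analyt_3d3d_Tetra}, extract the $h$-scaling of each factor via Lemma~\ref{lem:mapTetra}, and count powers to arrive at $h^{3-2s}$.

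There is, however, a genuine gap in how you establish the $h$-independence of $\rho$. You choose $\rho_0$ so that the polynomials $\dist_t^2(\D_m(\eta))=v(\eta)^Tv(\eta)$, with $v(\eta):=\D_m^2(\eta)-\D_m^1(\eta)$, stay bounded away from zero on $\dellipse_{\rho_0}^{(i)}$, and then in your third step invoke~\eqref{eq:Dtt} to control the actual denominator. But~\eqref{eq:Dtt} is the spectral inequality $|M_t v|\geq c_\T h_t|v|$ for \emph{real} vectors; for complex $\eta$ the analytically continued denominator is $\big(v(\eta)^TM_t^TM_t\,v(\eta)\big)^{(3+2s)/2}$, and there is no inequality of the form $|v^TM_t^TM_t\,v|\geq c_\T^2 h_t^2\,|v^Tv|$ for $v\in\C^3$ (indeed $v^Tv$ can vanish for $v\neq0$). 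Thus a nonvanishing $\dist_t$ on your $\rho_0$-domain does not by itself prevent the true denominator from vanishing there. The paper sidesteps this by writing the denominator as $h_t^{3+2s}$ times $|h_t^{-1}M_t\,v(\eta)|^{3+2s}$ and observing that the scalar function $\eta\mapsto v(\eta)^T(h_t^{-2}M_t^TM_t)\,v(\eta)$ depends on $\eta$ and on the normalized matrix $h_t^{-2}M_t^TM_t$, whose spectrum lies in $[c_\T^2,3]$ by Lemma~\ref{lem:mapTetra}; the admissible $\rho$ is then uniform over this compact family of matrices, hence depends only on $\theta_\T$ and not on $h$. With this correction your power-counting in the second and third steps goes through unchanged.
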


\begin{proof}
 Since the two tetrahedra are identical, we drop the index.
 From Lemma \ref{lem:analyt_3d3d_Tetra} we know that $\tk_{1,T}$ is analytic in a complex neighborhood of $[0,1]^2$.
 Therefore, $\rho>1/2$ exists such that $\tk_{1,T}$ can be analytically extended to $\bigcup_{i=1}^2 \dellipse_{\rho}^{(i)}$.
 We show that the choice of $\rho$ is independent of $h$.
 In the proof of Lemma~\ref{lem:analyt_3d3d_Tetra}, we have seen that denominator of
 \[
  \tk_{1,T}(\eta) = \eta_1 \sum_{m=1}^{9} \frac{[\alpha_{i,1:3}^T\, M_t(\D_m^2(\eta)-\D_m^1(\eta))] \, [\alpha_{j,1:3}^T\, M_t(\D_m^2(\eta)-\D_m^1(\eta))]}{|h_t^{-1}M_t(\D_m^2(\eta)-\D_m^1(\eta))|^{3+2s}}\,h_t^{-3-2s}|\deta M_t|^2.
 \]
 is responsible for the size of the complex neighborhood.
 Due to Lemma~\ref{lem:mapTetra} and~\eqref{eq:c_T}, it holds that
 \[
    | h_t^{-1} M_t x | \geq c_\T |x|,\quad x\in\R^3,
 \]
 where the positive constant $c_\T$ does not depend on $h_t$.
 By combining this with \eqref{eq:Dtt}, we obtain
 \[
  |h_t^{-1}M_t(\D_m^2(\eta)-\D_m^1(\eta))| \geq c_\T |\D_m^2(\eta)-\D_m^1(\eta)| > 0, \quad  \eta \in [0,1]^2, \quad m=1,\ldots,9.
 \]
 Since the $\D_m$ do not depend on $h_t$, it follows that the size of the complex neighborhood is independent of $h_t$ and thus also independent of $h$; see~\eqref{assum:quasi_uniform_h}. 
 This implies that the choice of $\rho$ is independent of $h$.
 We continue with the estimation of the integrand.
 By considering the scaling of the linear mapping~$\chi_t$, we obtain
 \[ | h_t^{-1}\, M_t x |
       \leq |x_1|\, \frac{| b_t - a_t |}{h_t} 
         +  |x_2|\, \frac{| c_t - b_t |}{h_t}
         +  |x_3|\, \frac{| d_t - b_t |}{h_t} 
       \leq \sqrt{3} | x|.
 \]
 Due to the scaling of $\alpha_{\star,1:3}$ with $\star \in \{i,j\}$, it holds
 \[
    | \alpha_{\star,1:3}^T\, M_t x| \leq C\, |x|,
 \]
 where the positive constant $C$ is independent of $h$.
 By combining these results, we obtain for $i=1,2$ and for $m=1,\ldots,9$, that
 \[
    \sup_{\eta \in \dellipse_{\rho}^{(i)}} \left| \eta_1\, \frac{[\alpha_{i,1:3}^T\, M_t(\D_m^2(\eta)-\D_m^1(\eta))] \, [\alpha_{j,1:3}^T\, M_t(\D_m^2(\eta)-\D_m^1(\eta))]}{|h_t^{-1}\,M_t(\D_m^2(\eta)-\D_m^1(\eta))|^{3+2s}} \right|\leq \tilde c
 \]
 with $\tilde c>0$ independent of~$h_t$. 
 Using Lemma~\ref{lem:mapTetra}, this leads to 
 \[
   \sup_{\eta \in \dellipse_{\rho}^{(i)}} | \tk_{1,T}(\eta)| \leq \tilde c \sum_{m=1}^{9} h_t^{-3-2s}\, |\deta M_t|^2 \leq 9\, C\, h^{3-2s},\quad i=1,2.
 \]
  \end{proof}

  Next, the case of a singularity along one face is considered.

 \begin{lemma}
  There exists a constant $\rho > 1/2 $ that depends only on $\theta_{\T}$  
  such that the integrand $\tk_{1,F}$ in~\eqref{eq:Ergebnis_Duffy_3d3d_Face}
  can be analytically extended to $\bigcup_{i=1}^3 \dellipse_{\rho}^{(i)}$.
  It holds that
 \[
       \sup_{\eta \in \dellipse_{\rho}^{(i)}} |\tk_{1,F}(\eta)| \leq C\, h^{3-2s}, \quad i=1,2,3.
 \]
\end{lemma}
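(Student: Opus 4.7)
The plan is to mirror the argument of Lemma~\ref{lem:error_est_integrand_3d3d_Tetra} at the level of the face singularity. Analyticity of~$\tk_{1,F}$ on \emph{some} complex neighborhood of~$[0,1]^3$ is already provided by Lemma~\ref{lem:analyt_3d3d_Tetra}, so only two things remain: first, showing that the neighborhood and hence $\rho>1/2$ can be chosen independently of~$h$; second, bounding the supremum by~$Ch^{3-2s}$.

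For the first step, the only factor in $\tk_{1,F}(\eta)=\sum_{m=1}^{17}\tk_1(\D_m(\eta))\,\deta\J_m(\eta)$ whose region of analyticity is not automatic is the denominator. I would normalize by setting $\tilde M_{t_\star}:= h_{t_\star}^{-1}M_{t_\star}$; by Lemma~\ref{lem:mapTetra} and~\eqref{eq:c_T} these maps have operator norms bounded above and below by positive constants depending only on $\theta_\T$. Since $\chi_{t_1}$ and $\chi_{t_2}$ agree on the common face (the first two columns of $M_{t_1}$ coincide with those of $M_{t_2}$), the contributions $\xi_1(e_1,e_1)+\xi_1[1-\xi_2](e_2,e_2)$ from~\eqref{eq:simplify_D_face} cancel after the subtraction $\chi_{t_1}(\tx)-\chi_{t_2}(\ty)$, so the denominator is, up to a bounded factor,
\[
h^{-(3+2s)}\,\bigl|\tilde M_{t_1}\D_m^1(\eta) - \tilde M_{t_2}\D_m^2(\eta)\bigr|^{-(3+2s)}.
\]
Strict positivity of the expressions $\dist_\tau(\D_m(\eta))$ on $[0,1]^3$, exactly as established in the face portion of the proof of Lemma~\ref{lem:analyt_3d3d_Tetra}, yields by continuity a complex neighborhood of $[0,1]^3$ of radius independent of~$h$ on which the denominator stays uniformly bounded away from zero. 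Since the $\D_m$ are polynomials independent of~$h$, this gives the $h$-independence of~$\rho$.

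For the supremum bound, I would count powers of~$h$ in each summand: the two Jacobians satisfy $|\deta M_{t_1}|\,|\deta M_{t_2}|\leq Ch^6$ by Lemma~\ref{lem:mapTetra}; the rescaled denominator contributes~$h^{-(3+2s)}$; each of the two basis-function differences, after the $\xi$-extractions~\eqref{eq:lin} and~\eqref{eq:lin2} and using that $\alpha_{\star,1:3}^T M_{t_\star}$ is bounded uniformly in~$h$ (as in Lemma~\ref{lem:error_est_integrand_3d3d_Tetra}), contributes a factor of~$O(1)$ on the ellipses; and $\deta\J_m(\eta)\in\{\eta_1^2\eta_2,\eta_1,\eta_1^2\}$ is a polynomial trivially bounded on $\dellipse_\rho^{(i)}$. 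Multiplying and summing the $17$ contributions yields
\[
\sup_{\eta\in\dellipse_\rho^{(i)}}|\tk_{1,F}(\eta)| \leq C\,h^{6-(3+2s)} = C\,h^{3-2s}, \quad i=1,2,3.
\]

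The main obstacle will be the first step: verifying uniform positivity of $\dist_\tau(\D_m(\eta))$ on the complex ellipses for all $17$ sub-domain transformations, in particular the asymmetric cases $\D_{15},\D_{16},\D_{17}$ whose Jacobi structure differs and whose explicit distance expressions are not spelled out in the proof of Lemma~\ref{lem:analyt_3d3d_Tetra}. Once that verification is carried out, the bookkeeping in the second step is a direct repetition of the corresponding passage in Lemma~\ref{lem:error_est_integrand_3d3d_Tetra}.
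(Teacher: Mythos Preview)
Your proposal is correct and follows essentially the same route as the paper, which in fact omits this proof with the remark that it is ``structurally the same'' as Lemma~\ref{lem:error_est_integrand_3d3d_Tetra}. One small inaccuracy: your remark about the cancellation of the $\xi_1(e_1,e_1)+\xi_1[1-\xi_2](e_2,e_2)$ contributions from~\eqref{eq:simplify_D_face} is misplaced at this stage, since the $\xi$-variables have already been integrated out in~\eqref{eq:Ergebnis_Duffy_3d3d_Face} and $\tk_{1,F}$ involves only $\D_m(\eta)$; the relevant denominator is directly $|M_{t_1}\D_m^1(\eta)-M_{t_2}\D_m^2(\eta)|^{3+2s}$, whose $h$-independent lower bound comes (via Assumption~\ref{assumpt:xi} and~\eqref{eq:c_T}) from the positivity of $\dist_\tau(\D_m(\eta))$ established in Lemma~\ref{lem:analyt_3d3d_Tetra}, exactly as you then invoke.
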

  
  Now we examine what happens when the singularity is along a corner.
  
 \begin{lemma}
 There exists a constant $\rho>1/2 $ that depends only on $\theta_{\T}$  
  such that the integrand $\tk_{1,E}$ in~\eqref{eq:Ergebnis_Duffy_3d3d_Edge}
  can be analytically extended to $\bigcup_{i=1}^4 \dellipse_{\rho}^{(i)}$.
  It holds that
 \[
    \sup_{\eta \in \dellipse_{\rho}^{(i)}} |\tk_{1,E}(\eta)| \leq C\, h^{3-2s},\quad i=1,\dots,4.
 \]
\end{lemma}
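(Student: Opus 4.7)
The plan is to adapt the argument of Lemma~\ref{lem:error_est_integrand_3d3d_Tetra} to the edge case, exploiting the product structure~\eqref{eq:simplify_D_edge} of the Duffy transformation and Assumption~\ref{assumpt:xi}, item~3. Componentwise analyticity on a complex neighborhood of $[0,1]^4$ is already furnished by Lemma~\ref{lem:analyt_3d3d_Tetra}, so some $\rho>1/2$ exists a priori; the real work is to exhibit such a $\rho$ depending only on~$\theta_\T$, together with the bound $h^{3-2s}$.

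First I would rescale, writing $M_{t_\star} = h_{t_\star}\tilde M_{t_\star}$ ($\star=1,2$), so that by Lemma~\ref{lem:mapTetra} and~\eqref{eq:c_T} one has $c_\T|x|\leq|\tilde M_{t_\star} x|\leq\sqrt{3}\,|x|$ with constants depending only on $\theta_\T$. Because $a_{t_1}=a_{t_2}$ and $b_{t_1}=b_{t_2}$, the common $\xi_1 e_1$ contribution in~\eqref{eq:simplify_D_edge} cancels in $\chi_{t_1}(\D_m^1)-\chi_{t_2}(\D_m^2)$, yielding
\[
 |\chi_{t_1}(\D_m^1(\xi,\eta))-\chi_{t_2}(\D_m^2(\xi,\eta))| = \xi_1\xi_2\,|M_{t_1}\D_m^1(\eta)-M_{t_2}\D_m^2(\eta)|.
\]
By Assumption~\ref{assumpt:xi}, item~3, there is a point $p$ on the common edge for which this quantity is bounded below by $c\,h\,\dist_E(\D_m(\eta))$, with $\dist_E$ a shape-free quantity polynomial in $\eta$. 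Writing out the components of $\D_m^1$ and $\D_m^2$ for $m=1,2$ (the cases $m=3,4$ follow by the symmetry $\D_3=\D_1^T$, $\D_4=\D_2^T$) and inspecting the resulting sums of squares — just as in the proof of Lemma~\ref{lem:analyt_3d3d_Tetra} for $\tk_{1,T}$ — shows that $|\dist_E(\D_m(\eta))|^2$ is strictly positive on $[0,1]^4$ and hence remains bounded away from zero on a complex polydisc of size depending only on the coefficients, producing the desired $h$-independent $\rho>1/2$.

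For the upper bound I follow the template of Lemma~\ref{lem:error_est_integrand_3d3d_Tetra}. The numerator factors $\alpha_{\star,1:3}^T M_{t_\nu}\D_m^\nu(\eta)$ appearing in $\tk_{1,E}$ are $O(1)$ on $\dellipse_{\rho}^{(i)}$, since $\alpha_{\star,1:3}\sim h^{-1}$ compensates $M_{t_\nu}\sim h$; the denominator is at least $c\,h^{3+2s}$ by the previous step; and Lemma~\ref{lem:mapTetra} gives $|\deta M_{t_1}||\deta M_{t_2}|\leq C\,h^6$. Together with the polynomial boundedness of $\deta\J_m(\eta)$ on the polydisc and the summation over the four sub-domains, this yields $\sup_{\eta\in\dellipse_{\rho}^{(i)}}|\tk_{1,E}(\eta)|\leq C\,h^{3-2s}$. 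The main obstacle is the positivity check for $\dist_E(\D_m)$, which is slightly more delicate than in the tetra case because the shared edge, rather than a single vertex, has been collapsed into the radial direction $(e_1,e_1)$; one must also handle the sub-cases where $\fie_i$ or $\fie_j$ vanishes along the common edge, in which the relevant numerator factor is estimated via~\eqref{eq:lin2} instead of~\eqref{eq:lin}, preserving the same $O(1)$ scaling by a parallel argument.
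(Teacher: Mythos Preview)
Your proposal is correct and follows essentially the same approach as the paper: the paper omits the proof of this lemma, stating only that it is ``structurally the same'' as the proof of Lemma~\ref{lem:error_est_integrand_3d3d_Tetra}, which is precisely the template you adapt via the factorization~\eqref{eq:simplify_D_edge}, Assumption~\ref{assumpt:xi}~(3), and the positivity check for the edge analogue of $\dist_\tau$.
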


 Here, the singularity at a common vertex is examined.
  
  \begin{lemma}\label{lem:error_est_integrand_3d3d_Vertex}
 There exists a constant $\rho > 1/2 $ that depends only on $\theta_{\T}$   
  such that the integrand $\tk_{1,V}$ in~\eqref{eq:Ergebnis_Duffy_3d3d_Vertex}
  can be analytically extended to $\bigcup_{i=1}^5 \dellipse_{\rho}^{(i)}$.
  It holds that
 \[
    \sup_{\eta \in \dellipse_{\rho}^{(i)}} |\tk_{1,V}(\eta)| \leq C\, h^{3-2s},\quad i=1,\ldots,5.
 \]
\end{lemma}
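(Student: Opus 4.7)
The plan is to mirror the proof of Lemma~\ref{lem:error_est_integrand_3d3d_Tetra}. Componentwise analyticity of $\tk_{1,V}$ on some complex neighborhood of $[0,1]^5$ is already provided by Lemma~\ref{lem:analyt_3d3d_Tetra}, so the task reduces to showing (i)~that $\rho>1/2$ may be chosen independently of $h$, and (ii)~the $h^{3-2s}$ supremum bound. Reading off \eqref{eq:linK}--\eqref{eq:lin2}, in the generic case where no factor of $[\fie_i(x)-\fie_i(y)][\fie_j(x)-\fie_j(y)]$ vanishes, the $m$-th summand of $\tk_{1,V}(\eta)$ is
\[
\deta\J(\eta)\,\frac{[\alpha_{i,1:3}^T Y_m(\eta)]\,[\alpha_{j,1:3}^T Y_m(\eta)]}{|Y_m(\eta)|^{3+2s}}\,|\deta M_{t_1}|\,|\deta M_{t_2}|,\quad Y_m(\eta):=M_{t_1}\D_m^1(\eta)-M_{t_2}\D_m^2(\eta);
\]
the two remaining vanishing cases of Section~\ref{sec:Duffy_3d3d_Vertex} replace one or both of the $\alpha$-inner-product factors by $\alpha_{\star,1:3}^T M_{t_k}\D_m^k(\eta)$, which changes none of the estimates below.

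The crucial lower bound on $|Y_m(\eta)|$ is obtained as in Lemma~\ref{lem:error_est_integrand_3d3d_Tetra}. Rescaling via $\tilde M_k:=h^{-1}M_{t_k}$, Lemma~\ref{lem:mapTetra} together with \eqref{eq:c_T} and \eqref{assum:quasi_uniform_h} yields $c_\T|x|\le|\tilde M_k x|\le\sqrt{3}\,|x|$ with constants independent of $h$. Invoking part~(2) of Assumption~\ref{assumpt:xi} at the shared vertex $p=a_{t_1}=a_{t_2}$ (which is the origin of the reference tetrahedron) and translating to reference coordinates gives
\[
|\tilde M_1\D_m^1(\eta)-\tilde M_2\D_m^2(\eta)|\;\ge\;c_A c_\T\bigl(|\D_m^1(\eta)|+|\D_m^2(\eta)|\bigr).
\]
Inspecting the explicit formulas for $\D_1,\D_2$ from Section~\ref{sec:Duffy_3d3d_Vertex}, one of $\D_m^1,\D_m^2$ has first coordinate identically $1$, so $|\D_m^1(\eta)|+|\D_m^2(\eta)|\ge 1$ on $[0,1]^5$. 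Hence $|Y_m(\eta)|\ge c\,h$ on the real cube with $c$ depending only on $\theta_\T$ and $c_A$. By continuity of the polynomial $|Y_m(\eta)|^2$ (analytically continued as a polynomial in $\eta\in\C^5$) and compactness, this strict positivity persists on a complex tubular neighborhood whose thickness depends only on $\theta_\T$ and $c_A$, giving the required $h$-independent $\rho>1/2$.

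For the supremum bound I would combine $|\deta M_{t_k}|\le Ch^3$ from Lemma~\ref{lem:mapTetra}, the $h$-independent estimate $|\alpha_{\star,1:3}^T M_{t_k} x|\le C|x|$ used in the proof of Lemma~\ref{lem:error_est_integrand_3d3d_Tetra}, and the polynomial bound on $\deta\J(\eta)$. In each of the three vanishing cases the numerator is uniformly bounded in $\eta$ on $\dellipse_\rho^{(i)}$, the two determinants contribute $h^6$, and the denominator contributes $h^{3+2s}$, giving $|\tk_{1,V}(\eta)|\le Ch^{3-2s}$ uniformly on each $\dellipse_\rho^{(i)}$ for $i=1,\dots,5$. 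The main obstacle I expect is the complex-extension step: since $|Y_m(\eta)|^2$ is literally a squared norm only for real $\eta$, one must work with its analytic continuation as a polynomial and argue---via compactness plus continuity, with all involved constants tracked carefully---that its zero set stays uniformly away from the real cube with a distance controlled by $\theta_\T$ and $c_A$ alone.
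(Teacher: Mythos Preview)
Your proposal is correct and follows exactly the approach the paper intends: the paper does not give a separate proof of this lemma but simply states that the proofs of Lemmas~\ref{lem:error_est_integrand_3d3d_Tetra} through~\ref{lem:error_est_integrand_3d3d_Vertex} ``are structurally the same'' and are ``therefore omitted.'' You have in fact supplied more detail than the paper does---in particular the use of Assumption~\ref{assumpt:xi}(2) at the common vertex and the observation that one component of each $\D_m$ has first coordinate identically~$1$---all of which is correct and in the spirit of the analogous arguments in the proofs of Lemmas~\ref{lem:analyt_3d3d_Tetra} and~\ref{lem:error_est_integrand_3d3d_Tetra}.
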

The proofs of the Lemmas~\ref{lem:error_est_integrand_3d3d_Tetra} to~\ref{lem:error_est_integrand_3d3d_Vertex} are structurally the same.
Therefore, they are omitted.

Finally, the case is investigated in which the tetrahedra have a positive distance
  \[ d_{t_1,t_2} := \dist(t_1,t_2) := \inf_{(x,y)\in t_1\times t_2} |x-y | > 0. \]
 The transformation 
  \[
      \quad \D(\eta):=( (\D_1(\eta_{1:3}),\D_2(\eta_{4:6})) = \left( \begin{bmatrix}  \eta_1\\ \eta_1\eta_2\\ \eta_1\eta_2\eta_3 \end{bmatrix},
                                                                     \begin{bmatrix}  \eta_4\\ \eta_4\eta_5\\ \eta_4\eta_5\eta_6 \end{bmatrix}\right),\quad\eta\in\R^6,
  \]
  is analytic and we have $\deta\J(\eta)  = \eta_1^2\eta_2\eta_4^2\eta_5$.
 Applying this transformation to~\eqref{eq:TT_start}, we obtain
  \begin{equation}\label{eq:Ergebnis_Duffy_3d3d_Dist}
   I_{t_1,t_2} = \int_{[0,1]^6} \tk_{1,D}(\eta) \ud \eta
  \end{equation}
  with
  \[
     \tk_{1,D}(\eta) 
        := \frac{\beta_{ij}(\D_1(\eta),\D_2(\eta))}{|\chi_{t_2}(\D_2(\eta))-\chi_{t_1}(\D_1(\eta))|^{3+2s}}\, |\deta M_{t_1}|\,|\deta M_{t_2}|\, \deta\J(\eta)
  \]
  and 
  \[
     \beta_{ij}(\D_1(\eta),\D_2(\eta)) := 
         \begin{cases}
                      - \fie_i(\chi_{t_1}(\D_1(\eta))\, \fie_j(\chi_{t_2}(\D_2(\eta)), & t_1 \in \supp\, \fie_i,\, t_2 \in \supp\, \fie_j, \\
                      - \fie_j(\chi_{t_1}(\D_1(\eta))\, \fie_i(\chi_{t_2}(\D_2(\eta)), & t_2 \in \supp\, \fie_i,\, t_1 \in \supp\, \fie_j.
                 \end{cases}
  \]
 We focus on the almost singular cases, i.e.\ $ d_{t_1,t_2}\approx h$, as only those cause difficulties in the numerical integration.
  \begin{lemma}\label{lem:error_est_integrand_3d3d_dist}
 There exists a constant $\rho > \frac12$ that depends only on $\theta_\T$ such that $\tk_{1,D}$  
 can be analytically extended to $\bigcup_{i=1}^6 \dellipse_{\tilde\rho}^{(i)}$ .
 It holds that
 \[
    \sup_{\eta \in \dellipse_{\tilde \rho}^{(i)}} |\tk_{1,D}(\eta)| \leq C\, h^{3-2s},\quad i=1,\ldots,6.                                                                    
 \]
\end{lemma}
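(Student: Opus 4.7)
The plan is to mirror the structure of Lemma~\ref{lem:error_est_integrand_3d3d_Tetra} and its analogues, with the role played there by the Duffy-lifted radial variable now taken over by the geometric distance $d_{t_1,t_2}$ in the almost singular regime $d_{t_1,t_2}\approx h$.

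First, I would decompose $\tk_{1,D}$ into three building blocks: the Jacobian $\deta\J(\eta)=\eta_1^2\eta_2\eta_4^2\eta_5$, which is a polynomial; the numerator $\beta_{ij}(\D_1(\eta),\D_2(\eta))$, which is a product of two affine functions of $\eta$ (since the $\fie$'s are linear and the $\D_k$ are polynomial) and hence also a polynomial; and the only genuinely nontrivial factor, which is a non-integer power of the polynomial
\[
   p(\eta) := \sum_{k=1}^3 \bigl[(\chi_{t_2}(\D_2(\eta)))_k - (\chi_{t_1}(\D_1(\eta)))_k\bigr]^2.
\]
Componentwise analytic extension of $\tk_{1,D}$ to $\dellipse_{\tilde\rho}^{(i)}$ then reduces to showing that $p$ has no zeros on $\dellipse_{\tilde\rho}^{(i)}$ for each $i=1,\ldots,6$.

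Next, I would carry out the scaling argument that fixes $\tilde\rho>1/2$ independently of $h$. Writing
\[
   \chi_{t_2}(\D_2(\eta))-\chi_{t_1}(\D_1(\eta)) = (a_{t_2}-a_{t_1}) + M_{t_2}\D_2(\eta) - M_{t_1}\D_1(\eta),
\]
for real $\eta\in[0,1]^6$ this vector joins a point in $t_1$ to a point in $t_2$, so its modulus is bounded below by $d_{t_1,t_2}\geq c_1\,h$ in the almost singular regime. For complex $\eta$ on a bounded ellipse, the polynomial vectors $\D_1(\eta),\D_2(\eta)$ stay bounded by an absolute constant, while Lemma~\ref{lem:mapTetra} combined with~\eqref{assum:quasi_uniform_h} yields $|M_{t_i}\xi|\leq \sqrt{3}\,h_{t_i}\,|\xi|\leq C\,h\,|\xi|$. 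A direct estimate then shows that the complex perturbation of $p(\eta)$ from its real values is of order $h^2$, with a constant depending only on $\tilde\rho$ and $\theta_\T$. Choosing $\tilde\rho>1/2$ sufficiently close to $1/2$ (depending only on the ratio $c_1/C$ and $\theta_\T$, but not on $h$) therefore guarantees $|p(\eta)|\geq c'\,h^2$ uniformly on each $\dellipse_{\tilde\rho}^{(i)}$. This delivers both the analytic extension and the crucial lower bound on the denominator.

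Finally, I would assemble the pointwise bound. On $\dellipse_{\tilde\rho}^{(i)}$ the polynomial factors $\deta\J(\eta)$ and $\beta_{ij}(\D_1(\eta),\D_2(\eta))$ are uniformly $O(1)$ (the scaling $|\alpha_{\star,1:3}^T M_t|=O(1)$ exploited in the proof of Lemma~\ref{lem:error_est_integrand_3d3d_Tetra} applies verbatim), Lemma~\ref{lem:mapTetra} gives $|\deta M_{t_1}|\,|\deta M_{t_2}|\leq C\,h^6$, and the previous paragraph yields $|p(\eta)|^{-(3+2s)/2}\leq C\,h^{-3-2s}$. Multiplying these three estimates produces the asserted bound $|\tk_{1,D}(\eta)|\leq C\,h^{3-2s}$. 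The main obstacle is precisely the $h$-uniform choice of $\tilde\rho$: it is only possible because the almost singular assumption $d_{t_1,t_2}\approx h$ matches the $O(h)$-scaling of $M_{t_i}\D_i(\eta)$, so that the dimensionless ratio governing the admissible complex perturbation of $p$ is independent of $h$.
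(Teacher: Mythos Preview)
Your proposal is correct and follows the approach the paper itself indicates: the paper omits this proof, stating only that it is analogous to the proof of Lemma~\ref{lem:error_est_integrand_3d3d_Tetra}, and your argument is precisely that analogue, with the perturbation estimate for $p(\eta)$ supplying the $h$-uniform choice of $\tilde\rho$ that in the identical-tetrahedra case came from the direct scaling $h_t^{-1}M_t$. Your closing remark that the almost singular hypothesis $d_{t_1,t_2}\approx h$ is exactly what makes the dimensionless ratio governing $\tilde\rho$ independent of $h$ is the key observation, and it matches the paper's explicit restriction to this regime just before the lemma.
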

We omit the proof of Lemma~\ref{lem:error_est_integrand_3d3d_dist} as it is analogous to the proof of Lemma~\ref{lem:error_est_integrand_3d3d_Tetra}.

\paragraph{Tetrahedron and Panel}
 Next, we consider the interaction between a tetrahedron and a panel.
 First, the case of a singularity along a common face is considered, i.e.\ the panel is a face of the tetrahedron.

  \begin{lemma}\label{lem:error_est_integrand_3d2d_Face}
 There exists a constant $\rho > \frac12$ that depends only on $\theta_\T$ 
 such that the integrand $\tk_{2,F}$ in~\eqref{eq:Ergebnis_Duffy_3d2d_Face}
 can be analytically extended to $\bigcup_{i=1}^2 \dellipse_{\rho}^{(i)}$.
 It holds that
 \begin{align*}
    \sup_{\eta \in \dellipse_{\rho}^{(i)}} |\tk_{2,F}(\eta)| &\leq C\, h^{3-2s}, \quad i=1,2.
 \end{align*}
\end{lemma}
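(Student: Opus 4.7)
The plan is to mirror the proof of Lemma~\ref{lem:error_est_integrand_3d3d_Tetra}, adapted to the face-singularity between a tetrahedron and a panel. Lemma~\ref{lem:analyt_3d2d_Face} already supplies the componentwise analytic extension of $\tk_{2,F}$ to some complex neighborhood of $[0,1]^2$, so the two remaining tasks are to confirm that the ellipse radius $\rho$ can be chosen independently of $h$ and to establish the uniform bound $|\tk_{2,F}(\eta)| \le C h^{3-2s}$.

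For the first task, I would normalize the geometry by introducing $\hat M_t := h_t^{-1} M_t$ and $\hat M_\tau := h_\tau^{-1} M_\tau$. By Lemmas~\ref{lem:mapTetra},~\ref{lem:mapPanel} together with~\eqref{eq:c_T}, these rescaled matrices have singular values bounded above and below by positive constants depending only on $\theta_\T$. In each of the nine sub-domains, the Duffy simplification~\eqref{eq:simplify_D_face} yields
\[
 \chi_t(\D_m^1(\xi,\eta)) - \chi_\tau(\D_m^2(\xi,\eta)) = \xi_1\xi_2\xi_3\,[M_t\,\D_m^1(\eta) - M_\tau\,\D_m^2(\eta)],
\]
so factoring out $h_t$ reduces the denominator to $(\xi_1\xi_2\xi_3 h_t)^{3+2s}$ multiplied by an $\eta$-function whose zero set depends only on the $\D_m$ and the bounded rescaled matrices. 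The case-by-case nonvanishing computation used for $\dist_\tau(\D_m(\eta))^2$ in the face case of Lemma~\ref{lem:analyt_3d3d_Tetra} transfers almost verbatim and shows this $\eta$-function is bounded away from zero on $[0,1]^2$, so $\rho$ may be chosen independently of $h$ in view of~\eqref{assum:quasi_uniform_h}.

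For the supremum bound, I would track the $h$-powers entering the integrand. Lemmas~\ref{lem:mapTetra} and~\ref{lem:mapPanel} give $|\deta M_t|\sim h^3$ and $|(b_\tau - a_\tau)\times(c_\tau - b_\tau)|\sim h^2$, while the rescaled denominator contributes $h^{-3-2s}$. Two further structural observations deliver the missing factor of $h$. First, because $M_\tau\hat e_1$ and $M_\tau\hat e_2$ span the plane of $\tau$, they are orthogonal to $n_\tau$, so
\[
 (y-x)^T n_\tau = -\tx_3\,(d_t - b_t)^T n_\tau = -\xi_1\xi_2\xi_3\,\D_{m,3}^1(\eta)\,(d_t - b_t)^T n_\tau,
\]
contributing $|(d_t - b_t)^T n_\tau|\le C h$. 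Second, since $\tau \subset \partial\Omega_{ij}$ forces both $\fie_i|_\tau = 0$ and $\fie_j|_\tau = 0$, the reduction in the spirit of~\eqref{eq:lin2} shows $\fie_\star(\chi_t(\tx)) = \tx_3\,\alpha_{\star,1:3}^T(d_t - b_t)$, which is of order $\xi_1\xi_2\xi_3$ with a coefficient $O(1)$ uniformly in $h$ (using $|\alpha_{\star,1:3}|\lesssim h^{-1}$ and $|d_t - b_t|\lesssim h$). Collecting these scalings and absorbing the $\xi$-integration into the prefactor $\prod_{j=0}^{2}(5-j-2s)^{-1}$ of~\eqref{eq:Ergebnis_Duffy_3d2d_Face} yields $|\tk_{2,F}(\eta)| \le C\,h^{3-2s}$.

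The main obstacle is the careful bookkeeping of the $h$-powers: each individual step is routine, but reproducing exactly the prefactor in~\eqref{eq:Ergebnis_Duffy_3d2d_Face} and the exponent $h^{3-2s}$ relies simultaneously on the orthogonality of the image of $M_\tau$ to $n_\tau$ and on the vanishing of \emph{both} basis functions on the common face~$\tau$. Dropping either property would change the resulting exponent.
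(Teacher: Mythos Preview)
Your proposal is correct and follows exactly the route the paper indicates: it omits the proof of this lemma, stating it is analogous to Lemma~\ref{lem:error_est_integrand_3d3d_Tetra}, and your argument supplies precisely those analogous details. The rescaling $h_t^{-1}M_t$, $h_\tau^{-1}M_\tau$ together with the nonvanishing of the $\eta$-distance functions from Lemma~\ref{lem:analyt_3d2d_Face} gives $h$-independence of~$\rho$, and your $h$-bookkeeping for the bound is right.

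One small comment on the closing paragraph. Your observation that $\tau\subset\partial\Omega_{ij}$ forces both $\fie_i|_\tau=0$ and $\fie_j|_\tau=0$ is correct; in fact, since a nonzero contribution requires $t\subset\suppa\fie_i\cap\suppa\fie_j$ while neither nodal vertex may lie on~$\tau$, both nodes coincide with the fourth vertex $d_t$, so this case only arises for $i=j$. However, the remark that the exponent $h^{3-2s}$ would change without the orthogonality of $M_\tau$ to $n_\tau$ overstates things: even without that simplification one has $|(M_\tau\D_m^2(\eta)-M_t\D_m^1(\eta))^Tn_\tau|\le|M_\tau\D_m^2(\eta)|+|M_t\D_m^1(\eta)|\lesssim h$ directly from Lemmas~\ref{lem:mapPanel} and~\ref{lem:mapTetra}. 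The orthogonality is convenient for the explicit formula but not essential for the $h$-count. The vanishing of both basis functions on~$\tau$, by contrast, is indeed what allows the full factor $\xi_1\xi_2\xi_3$ to be extracted from $\fie_i\fie_j$ and hence what makes the prefactor in~\eqref{eq:Ergebnis_Duffy_3d2d_Face} come out as stated.
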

 
 Second, the case of the singularity along an edge is investigated.
  
 \begin{lemma}
 There exists a constant $\rho > \frac12$ that depends only on $\theta_\T$ 
 such that the integrand $\tk_{2,E}$ in~\eqref{eq:Ergebnis_Duffy_3d2d_Edge}
 can be analytically extended to $\bigcup_{i=1}^3 \dellipse_{\rho}^{(i)}$.
 It holds that
 \begin{align*}
    \sup_{\eta \in \dellipse_{\rho}^{(i)}} |\tk_{2,E}(\eta)| &\leq C\, h^{3-2s}, \quad i=1,2,3.
 \end{align*}
\end{lemma}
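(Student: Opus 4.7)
The plan is to adapt the argument used in Lemma~\ref{lem:error_est_integrand_3d3d_Tetra} to the tetrahedron--panel edge configuration. The componentwise analyticity of~$\tk_{2,E}$ in a complex neighborhood of~$[0,1]^3$ is already provided by Lemma~\ref{lem:analyt_3d2d_Face}, so some $\rho>1/2$ for which the analytic extension to~$\bigcup_{i=1}^3 \dellipse_\rho^{(i)}$ exists is available. The two tasks that remain are to show that $\rho$ can be chosen depending only on~$\theta_\T$, and to establish the pointwise bound of order~$h^{3-2s}$.

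For the independence of~$\rho$ from~$h$, I would use the simplified form of the Duffy transformation derived in Section~\ref{sec:Duffy_3d2d_Edge}. The matching conditions $a_t = a_\tau$ and $M_t e_1 = b_t - a_t = b_\tau - a_\tau = M_\tau \hat e_1$ along the common edge collapse the increment to
\[
    \chi_\tau(\D_m^2(\xi,\eta)) - \chi_t(\D_m^1(\xi,\eta)) = \xi_1\,\xi_2\,\bigl(M_\tau \D_m^2(\eta) - M_t \D_m^1(\eta)\bigr),\quad m=1,\ldots,4.
\]
After rescaling by~$h^{-1}$ and invoking~\eqref{eq:c_T} together with Lemmas~\ref{lem:mapPanel} and~\ref{lem:mapTetra}, the rescaled distance $|h^{-1}(M_\tau \D_m^2(\eta) - M_t \D_m^1(\eta))|$ is bounded below by a strictly positive continuous function of~$\eta$ on~$[0,1]^3$, with a constant depending only on~$\theta_\T$ via $c_\T$. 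Since the explicit maps $\D_m(\eta)$ listed in Section~\ref{sec:Duffy_3d2d_Edge} are polynomial in~$\eta$, this lower bound persists in a complex neighborhood whose size is controlled only by this lower bound, hence only by~$\theta_\T$.

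For the size estimate, I would collect $h$-powers factor by factor. By Lemma~\ref{lem:mapTetra}, $|\deta M_t| \leq C\,h_t^3$; by Lemma~\ref{lem:mapPanel}, $|(b_\tau - a_\tau)\times(c_\tau - b_\tau)| \leq C\,h_\tau^2$. After extracting the $\xi_1\xi_2$ prefactor from~$(y-x)$ and rescaling by~$h$, the fraction~$(y-x)^T n_\tau/|x-y|^{3+2s}$ contributes~$h^{-2-2s}$ times a function uniformly bounded on the complex extension. The products $\varphi_i(x)\,\varphi_j(x)$ are either uniformly bounded or gain additional~$\xi_1$ and~$\xi_2$ factors as in~\eqref{eq:lin2}; these combine with the~$\xi_1^4\xi_2^3$ part of the Jacobian~$\deta\J_m(\xi,\eta)$ and with~$\xi_1^{-3-2s}\xi_2^{-3-2s}$ coming from the denominator to produce exactly~$\xi_1^{4-2s}\xi_2^{3-2s}$ times the surviving $\eta$-dependent integrand, whose $\xi$-integration yields the factor~$1/[(5-2s)(4-2s)]$ in~\eqref{eq:Ergebnis_Duffy_3d2d_Edge}. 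Combining the $h$-powers and using~\eqref{assum:quasi_uniform_h} gives $h^3 \cdot h^2 \cdot h^{-2-2s} = h^{3-2s}$, uniformly over $\eta \in \dellipse_\rho^{(i)}$ and over $m$, which proves the claim.

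The main obstacle is the case distinction for the linear basis functions along the shared edge: one must separate the scenarios in which none, exactly one, or both of~$\varphi_i,\varphi_j$ vanish identically along the common edge, and verify in each case that the extraction of~$\xi_1$ and~$\xi_2$ is consistent with the Jacobian and denominator powers needed to produce the $\xi_1^{4-2s}\xi_2^{3-2s}$ weight. Once this bookkeeping is performed, the remainder of the argument reduces to the same dimensional analysis used in the proof of Lemma~\ref{lem:error_est_integrand_3d3d_Tetra}.
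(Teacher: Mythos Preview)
Your overall strategy mirrors the paper's intended one (the paper omits the proof and refers to Lemma~\ref{lem:error_est_integrand_3d3d_Tetra}), and your $h$-power bookkeeping for the size estimate is correct. There is, however, a genuine gap in the step where you bound the denominator from below. You write that invoking~\eqref{eq:c_T} together with Lemmas~\ref{lem:mapPanel} and~\ref{lem:mapTetra} gives a lower bound on $|h^{-1}(M_\tau \D_m^2(\eta) - M_t \D_m^1(\eta))|$ that depends only on~$\theta_\T$. But~\eqref{eq:c_T} controls $|M_t v|$ and $|M_\tau w|$ separately, not the difference $|M_\tau w - M_t v|$: since the columns of~$M_t$ and~$M_\tau$ beyond the shared edge direction are vectors pointing into two different simplices, nothing prevents cancellation unless you know something about the angle between~$t$ and~$\tau$. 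This is precisely the role of the geometric Assumption for the tetrahedron--panel pair with a common edge (item~3 of the second assumption block in Section~\ref{sec:Analyticity}), which you never invoke. The correct route, parallel to what the paper does explicitly for $\tk_{1,F}$ in the proof of Lemma~\ref{lem:analyt_3d3d_Tetra}, is to use that assumption to split $|\chi_\tau(\D_m^2)-\chi_t(\D_m^1)|\geq c_A(|M_\tau(\D_m^2-\kappa)|+|M_t(\kappa-\D_m^1)|)$ via a point $p=\chi_t(\kappa e_1)=\chi_\tau(\kappa\hat e_1)$ on the common edge, and only then apply~\eqref{eq:c_T} to each piece. After that split, the $h$-independence of the lower bound and hence of~$\rho$ follows exactly as you describe.

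A smaller point: the detailed $\xi_1,\xi_2$ bookkeeping you carry out is part of the derivation of~\eqref{eq:Ergebnis_Duffy_3d2d_Edge}, not of the present lemma. The object $\tk_{2,E}(\eta)$ depends on~$\eta$ only, so for the size bound you should work directly with its explicit form (Jacobians $\deta\J_m(\eta)$, reduced numerator $(M_\tau\D_m^2(\eta)-M_t\D_m^1(\eta))^T n_\tau$, reduced basis-function factors $\alpha_{\star,1:3}^T M_t\D_m^1(\eta)$, and the two geometric determinants) rather than reintroduce~$\xi$. Your claim that $\varphi_i\varphi_j$ can be ``uniformly bounded'' is also inconsistent with the factor $1/[(5-2s)(4-2s)]$ in~\eqref{eq:Ergebnis_Duffy_3d2d_Edge}; the paper's derivation relies on both basis functions contributing a $\xi_1\xi_2$ factor along the shared edge, and the reduced factors then scale like~$O(1)$ in~$h$, which is what your final count actually needs.
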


  Next, the singularity at a common vertex is studied.
  
  \begin{lemma}\label{lem:error_est_integrand_3d2d_Vertex}
 There exists a constant $\rho > \frac12$ that depends only on $\theta_\T$ 
 such that the integrand $\tk_{2,V}$ in~\eqref{eq:Ergebnis_Duffy_3d2d_Vertex}
 can be analytically extended to $\bigcup_{i=1}^4 \dellipse_{\rho}^{(i)}$.
 It holds that
 \begin{align*}
    \sup_{\eta \in \dellipse_{\rho}^{(i)}} |\tk_{2,V}(\eta)| &\leq C\, h^{3-2s}, \quad i=1,\ldots,4.
 \end{align*}
\end{lemma}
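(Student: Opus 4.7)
The plan is to mirror the argument used for Lemma~\ref{lem:error_est_integrand_3d3d_Tetra}, adapted to the tetrahedron-panel geometry at a common vertex. Componentwise analyticity of $\tk_{2,V}$ on $[0,1]^4$ has already been established in Lemma~\ref{lem:analyt_3d2d_Face}, so some half-axis parameter exists. What must still be verified is that $\rho$ can be chosen above $1/2$ independently of $h$ and that the prefactor $h^{3-2s}$ is the correct one.

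First I would rewrite $\tk_{2,V}$ in $h$-normalized form by factoring $h_t^{-1}$ out of $M_t$ and $h_\tau^{-1}$ out of $M_\tau$, so that
\[
   |\chi_\tau(\D_m^2(\eta)) - \chi_t(\D_m^1(\eta))|^{-3-2s} = h_t^{-3-2s}\,|h_t^{-1}M_\tau\D_m^2(\eta) - h_t^{-1}M_t\D_m^1(\eta)|^{-3-2s}.
\]
By the common-vertex part of the assumption for a tetrahedron and a panel applied to $p = a_t = a_\tau$, combined with \eqref{eq:c_T} and Lemmas~\ref{lem:mapPanel} and~\ref{lem:mapTetra}, the rescaled denominator satisfies
\[
   |h_t^{-1}(M_\tau\D_m^2(\eta) - M_t\D_m^1(\eta))| \geq c_A\,c_\T\,(|\D_m^1(\eta)| + |\D_m^2(\eta)|).
\]
Direct inspection of $\D_1$ and $\D_2$ in Section~\ref{sec:Duffy_3d2d_Vertex} shows that the first coordinate of $\D_1^1$ and the first coordinate of $\D_2^2$ are identically one, so the right-hand side is bounded below by a positive constant on $[0,1]^4$; by continuity this persists on a fixed complex neighborhood whose size depends only on $\theta_\T$. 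Hence $\rho > 1/2$ can be chosen independently of $h$.

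For the supremum bound I would collect the $h$-powers: $|\deta M_t|$ is of order $h^3$ and $|(b_\tau - a_\tau)\times(c_\tau - b_\tau)|$ of order $h^2$ by Lemmas~\ref{lem:mapPanel} and~\ref{lem:mapTetra}; the numerator $(y-x)^T n_\tau$ contributes one factor of $h$ through the rescaling; the rescaled denominator contributes $h^{-3-2s}$; and the product $\fie_i(x)\fie_j(x)$ is bounded uniformly in $h$ after the $\xi$-accounting already exploited in \eqref{eq:Ergebnis_Duffy_3d2d_Vertex}, because $|\alpha_{\star,1:3}^T M_t|$ is bounded independently of $h$. Multiplying these contributions gives $h^{1-3-2s+3+2} = h^{3-2s}$, and the residual $\eta$-dependent factor is a rational function bounded on each $\dellipse_\rho^{(i)}$ by a constant depending only on $\rho$.

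The main obstacle is confirming that the denominator stays at a positive complex distance from $[0,1]^4$ uniformly in $h$, which reduces entirely to the two geometric observations above: the common-vertex assumption yields the lower bound by $|\D_m^1(\eta)| + |\D_m^2(\eta)|$, and this sum is uniformly bounded away from zero because $\D_1^1$ and $\D_2^2$ each contain a constant component equal to one. Once this is settled, the remaining steps are structurally identical to the proofs of Lemmas~\ref{lem:error_est_integrand_3d3d_Tetra}--\ref{lem:error_est_integrand_3d3d_Vertex} and~\ref{lem:error_est_integrand_3d2d_Face}, and no new analytic difficulty is expected.
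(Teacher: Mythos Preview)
Your proposal is correct and follows exactly the approach the paper intends: the paper omits the proof, stating it is analogous to that of Lemma~\ref{lem:error_est_integrand_3d3d_Tetra}, and your argument faithfully carries out that analogy by combining the common-vertex assumption with \eqref{eq:c_T} to obtain an $h$-independent lower bound on the rescaled denominator, then performing the power count $h^{1-3-2s+3+2}=h^{3-2s}$. The only detail worth making explicit is that the quasi-uniformity \eqref{assum:quasi_uniform_h} is used implicitly when you replace $h_\tau/h_t$ by a constant in the lower bound.
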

The proofs of the Lemmas~\ref{lem:error_est_integrand_3d2d_Face} to~\ref{lem:error_est_integrand_3d2d_Vertex} are omitted as they are analogous to the proof of Lemma~\ref{lem:error_est_integrand_3d3d_Tetra}.

 Finally, the case is considered in which the tetrahedron and the panel have a positive distance
  \[ d_{t,\tau} := \dist(t,\tau) := \inf_{(x,y)\in t\times \tau} |x-y | > 0, \]
 The transformation 
  \[
      \quad \D(\eta):=( (\D_1(\eta_{1:3}),\D_2(\eta_{4:5})) = \left( \begin{bmatrix}  \eta_1\\ \eta_1\eta_2\\ \eta_1\eta_2\eta_3 \end{bmatrix},
                                                                     \begin{bmatrix}  \eta_4\\ \eta_4\eta_5 \end{bmatrix}\right),\quad \eta\in\R^5,
  \]
  is analytic and we have $\deta\J(\eta)  = \eta_1^2\,\eta_2\,\eta_4$.
  Applying this transformation to~\eqref{eq:TP_start}, we obtain
  \begin{equation}\label{eq:Ergebnis_Duffy_3d2d_Dist}
     I_{t,\tau} :=  \int_{[0,1]^5} \tk_{2,D}(\D(\eta)) \ud \eta
   \end{equation} 
  with
  \[
     k_{2,D}(\D(\eta)) 
        := \fie_i(\chi_t(\D_1(\eta))\, \fie_j(\chi_t(\D_1(\eta))\, \frac{[\chi_\tau(\D_2(\eta))-\chi_t(\D_1(\eta))]^T n}{|\chi_{t_2}(\D_2(\eta))-(\chi_{t_1}(\D_1(\eta))|^{3+2s}}\, |M_t|\,|M_\tau|\, \deta\J(\eta).
  \]
  Notice that $\tau \in \partial( \supp\fie \cup \supp\fie)$ and therefore, it holds that $d_{t,\tau} \approx h$.

  \begin{lemma}\label{lem:error_est_integrand_3d2d_dist}
   There exists a constant $\rho > 1/2$ that depends only on $\theta_\T$ 
   such that the integrand $\tk_{2,D}$ can be analytically extended to $\bigcup_{i=1}^5 \dellipse_\rho^{(i)}$.
   It holds that
   \[
    \sup_{\eta\in\dellipse_\rho^{(i)}} |\tk_{2,D}(\eta)| \leq C\, h^{3-2s}, \quad i=1,\ldots,5.
   \]
  \end{lemma}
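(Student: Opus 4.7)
The plan is to follow the template of Lemma~\ref{lem:error_est_integrand_3d3d_Tetra}: rescale the geometry by $h^{-1}$ so that the $h$-dependence is concentrated in overall prefactors, verify analyticity of the remaining $h$-independent quantity in a complex neighborhood whose size depends only on $\theta_\T$, and then reinstate the $h$-scaling to read off the $h^{3-2s}$ bound.

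The first step I would take is to introduce $F(\eta) := h^{-1}[\chi_\tau(\D_2(\eta)) - \chi_t(\D_1(\eta))]$, a polynomial in $\eta$ whose coefficients are expressible through $h^{-1}M_t$, $h^{-1}M_\tau$ and suitably rescaled vertex differences. By Lemmas~\ref{lem:mapPanel} and~\ref{lem:mapTetra} together with quasi-uniformity~\eqref{assum:quasi_uniform_h}, these coefficients are uniformly bounded in $h$. The hypothesis $d_{t,\tau}\approx h$ combined with~\eqref{eq:c_T} then furnishes a constant $c>0$, independent of $h$, such that $|F(\eta)|\ge c$ for every $\eta \in [0,1]^5$.

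The next step is the analytic continuation. The only non-polynomial factor of $\tk_{2,D}$ is the power $|\cdot|^{3+2s}$ in the denominator, which under the rescaling becomes $h^{3+2s}(F(\eta)^T F(\eta))^{(3+2s)/2}$. The (unconjugated, hence genuinely analytic) quadratic form $F(\eta)^T F(\eta)$ is a polynomial in $\eta$ with $h$-uniformly bounded coefficients and satisfies $F(\eta)^T F(\eta)\ge c^2$ on the real cube $[0,1]^5$. By a continuity argument in the imaginary direction, one may pick $\rho>1/2$ depending only on $\theta_\T$ such that $\mathrm{Re}(F(\eta)^T F(\eta)) \ge c^2/2$ throughout $\bigcup_{i=1}^5 \dellipse_\rho^{(i)}$; on this domain the principal branch of $z^{(3+2s)/2}$ supplies the analytic extension of the denominator and delivers the lower bound $\gtrsim h^{3+2s}$. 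All remaining factors of $\tk_{2,D}$ are polynomial in $\eta$, hence entire, so the entire integrand extends analytically to the same neighborhood.

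To conclude I would collect the $h$-powers: $|\fie_i\,\fie_j|\le 1$ on the supports, the vector $[\chi_\tau\circ\D_2-\chi_t\circ\D_1]^T n_\tau$ scales as $h$ by the upper bounds in Lemmas~\ref{lem:mapPanel} and~\ref{lem:mapTetra}, $|\deta M_t|\le Ch^3$, $|(b_\tau-a_\tau)\times(c_\tau-b_\tau)|\le Ch^2$, and $\deta\J(\eta)$ is bounded by a constant; combined with the denominator lower bound these give $|\tk_{2,D}(\eta)|\le C\, h\cdot h^3\cdot h^2\cdot h^{-3-2s} = C\,h^{3-2s}$ uniformly on each $\dellipse_\rho^{(i)}$. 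The main subtlety, as in Lemma~\ref{lem:error_est_integrand_3d3d_Tetra}, is verifying that the radius of the complex neighborhood on which $\mathrm{Re}(F^T F)$ stays strictly positive can be chosen $h$-uniformly; this is where the $h$-uniform bounds on the coefficients of $F$, together with~\eqref{assum:quasi_uniform_h}, do the essential work.
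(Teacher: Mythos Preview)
Your proposal is correct and follows precisely the template the paper intends: it omits the proof entirely, stating only that it is analogous to that of Lemma~\ref{lem:error_est_integrand_3d3d_dist}, which in turn is declared analogous to Lemma~\ref{lem:error_est_integrand_3d3d_Tetra}. Your rescaling by $h^{-1}$, the verification that the denominator stays bounded away from zero on an $h$-independent complex neighborhood via the uniform coefficient bounds from Lemmas~\ref{lem:mapPanel} and~\ref{lem:mapTetra} together with $d_{t,\tau}\approx h$, and the final power count $h\cdot h^3\cdot h^2\cdot h^{-3-2s}=h^{3-2s}$ are exactly the ingredients that the reference proof of Lemma~\ref{lem:error_est_integrand_3d3d_Tetra} would supply when transported to the present setting.
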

 Since the proof of Lemma~\ref{lem:error_est_integrand_3d2d_dist} is analogous to the proof of Lemma~\ref{lem:error_est_integrand_3d3d_dist},
 it is omitted.

\subsubsection{Error estimates for the integrals}
First, we start with the integrals of the type \eqref{int_3d3d}.

\begin{theorem} \label{thm:estimates_int_sing_3d3d}
 The approximations of the integrals~\eqref{eq:Ergebnis_Duffy_3d3d_Dist}, \eqref{eq:Ergebnis_Duffy_3d3d_Vertex}, \eqref{eq:Ergebnis_Duffy_3d3d_Edge}, \eqref{eq:Ergebnis_Duffy_3d3d_Face}, and~\eqref{eq:Ergebnis_Duffy_3d3d_Tetra} by means of tensor Gauss quadrature converge exponentially with respect to~$n$
 \begin{align}
   \lvert E^n[\tilde k_{1,\star}] \rvert &\leq C\, h^{3-2s}\, (2\rho_\star)^{-2n}
 \end{align}
 with $\rho_\star>1/2$ and $\star \in \{D,V,E,F,T\}$. 
\end{theorem}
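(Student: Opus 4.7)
The plan is to simply combine the multi-dimensional Gauss quadrature error bound of Theorem~\ref{thm:error_int_dd} with the analyticity and pointwise bounds already established in Lemmas~\ref{lem:error_est_integrand_3d3d_Tetra}--\ref{lem:error_est_integrand_3d3d_Vertex} and Lemma~\ref{lem:error_est_integrand_3d3d_dist}. No new geometric analysis is required; all the hard work of producing a complex ellipse of extension and controlling the integrand on it has been done in those lemmas. The role of the theorem is only to bundle the five cases together and read off the resulting convergence rate.

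Concretely, for each $\star\in\{D,V,E,F,T\}$ let $k_\star$ denote the dimension of the unit cube on which $\tilde k_{1,\star}$ is integrated ($k_T=2$, $k_F=3$, $k_E=4$, $k_V=5$, $k_D=6$). The cited lemmas provide a common $\rho_\star>1/2$, depending only on $\theta_\T$, together with an analytic extension of $\tilde k_{1,\star}$ to every slab $\dellipse_{\rho_\star}^{(i)}$, $i=1,\dots,k_\star$, satisfying
\[
\sup_{\eta\in\dellipse_{\rho_\star}^{(i)}} |\tilde k_{1,\star}(\eta)| \leq C\, h^{3-2s}.
\]
In particular the hypothesis of Definition~\ref{def:int} is fulfilled componentwise with the same parameter $\rho_\star$ in every coordinate direction. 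Applying Theorem~\ref{thm:error_int_dd} with $n_1=\dots=n_{k_\star}=n$ then gives
\[
E^n[\tilde k_{1,\star}] \leq \sum_{i=1}^{k_\star} c_i\,(2\rho_\star)^{-2n}\, \max_{x\in[0,1]^{k_\star}}\max_{z\in\partial\dellipse_{\rho_\star}}\bigl|\tilde k_{1,\star\, i,x}^*(z)\bigr| \leq k_\star\, \tilde c\, C\, h^{3-2s}\, (2\rho_\star)^{-2n},
\]
and absorbing $k_\star\,\tilde c\,C$ into a single constant yields the claimed estimate.

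There is essentially no obstacle: the non-trivial inputs (uniformity of $\rho_\star$ in $h$, and the $h^{3-2s}$ scaling of the sup norm on each $\dellipse_{\rho_\star}^{(i)}$) have already been done in the preceding lemmas, while the $h$-independence of the quadrature constants $c_i$ is part of Theorem~\ref{thm:error_int_1d}. The only bookkeeping point is to observe that the $\rho_\star$ produced by the analyticity lemmas for each coordinate direction $i$ can be replaced by their minimum without affecting any of the bounds, so a single $\rho_\star$ controls all $k_\star$ summands simultaneously. Since only finitely many cases are involved, nothing further is needed.
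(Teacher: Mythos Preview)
Your proposal is correct and follows essentially the same route as the paper's proof: invoke the componentwise analyticity and the $h^{3-2s}$ sup-norm bounds from Lemmas~\ref{lem:error_est_integrand_3d3d_Tetra}--\ref{lem:error_est_integrand_3d3d_dist} (together with Lemma~\ref{lem:analyt_3d3d_Tetra} for the singular cases), then apply the tensor-Gauss error estimate of Theorem~\ref{thm:error_int_dd}. Your write-up is in fact slightly more explicit than the paper's about the dimensions $k_\star$ and the absorption of the constants $k_\star\,\tilde c$, but the underlying argument is identical.
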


\begin{proof}
 Lemma~\ref{lem:analyt_3d3d_Tetra} shows that the requirement of Theorem~\ref{thm:error_int_dd} is fulfilled for all singularity types 
 and Lemma~\ref{lem:error_est_integrand_3d3d_Tetra} to Lemma~\ref{lem:error_est_integrand_3d3d_Vertex} give the corresponding estimates for the integrands.
 Therefore, we can apply Theorem~\ref{thm:error_int_dd} and obtain the desired results.
 
 Since there is a positive distance between the tetrahedra, the integrand $\tk_{1,D}$ is obviously analytic.
 Therefore, Theorem~\ref{thm:error_int_dd} can be applied and by combining it with Lemma~\ref{lem:error_est_integrand_3d3d_dist} 
 we obtain the desired estimate.
\end{proof}
Notice that for the integral~\eqref{eq:Ergebnis_Duffy_3d3d_Dist} we have a worst-case estimation, since we assume that $d_{t_1,t_2} \approx h$.  

Second, we consider the integrals of type~\eqref{int_3d2d}.

\begin{theorem} \label{thm:estimates_int_sing_3d2d}
 The approximation of the integrals~\eqref{eq:Ergebnis_Duffy_3d2d_Dist}, \eqref{eq:Ergebnis_Duffy_3d2d_Vertex}, \eqref{eq:Ergebnis_Duffy_3d2d_Edge}, and~\eqref{eq:Ergebnis_Duffy_3d2d_Face} 
 by means of tensor Gauss quadrature converge exponentially with respect to $n$
 \begin{align}
   \lvert E^n[\tilde k_{2,\star}] \rvert &\leq C\, h^{3-2s}\, (2\rho_\star)^{-2n}
 \end{align}
 with $\rho_\star>1/2$ and $\star \in \{D,V,E,F\}$. 
\end{theorem}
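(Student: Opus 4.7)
The plan is to mirror the proof of Theorem~\ref{thm:estimates_int_sing_3d3d} almost verbatim, since all the substantive work has already been done in the preparatory lemmas. The structure is a direct assembly: (i) check the hypothesis of the multi-dimensional quadrature estimate, (ii) insert the integrand bounds that were prepared specifically for this purpose, (iii) combine them via Theorem~\ref{thm:error_int_dd}.

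In detail, I would proceed as follows. For the three singular cases $\star\in\{V,E,F\}$, Lemma~\ref{lem:analyt_3d2d_Face} asserts that each of the integrands $\tk_{2,F}$, $\tk_{2,E}$, $\tk_{2,V}$ is componentwise analytic on $[0,1]^k$, with $k=2,3,4$ respectively. This is exactly the assumption needed to invoke Theorem~\ref{thm:error_int_dd}. Next, Lemmas~\ref{lem:error_est_integrand_3d2d_Face} through \ref{lem:error_est_integrand_3d2d_Vertex} supply an $h$-independent $\rho_\star>1/2$ such that the analytic extension to $\bigcup_i \dellipse_{\rho_\star}^{(i)}$ is bounded by $C h^{3-2s}$. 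Feeding these two ingredients into Theorem~\ref{thm:error_int_dd} yields
\[
  E^n[\tk_{2,\star}] \;\leq\; \sum_{i=1}^{k} c_i\,(2\rho_\star)^{-2n_i}\, \max_{x\in[0,1]^k}\max_{z\in\partial\dellipse_{\rho_\star}}|\tk^\ast_{2,\star;i,x}(z)|
  \;\leq\; C\, h^{3-2s}\,(2\rho_\star)^{-2n},
\]
which is the claimed bound for $\star\in\{V,E,F\}$.

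For the distance case $\star=D$, the integrand $\tk_{2,D}$ is obtained from the analytic (polynomial) transformation $\D(\eta)$ applied to $\tk_2$, and since $d_{t,\tau}>0$ the original kernel $k_2(\chi_t(\tx),\chi_\tau(\ty))$ has no singularity on $\bar t\times\bar\tau$. Consequently $\tk_{2,D}$ extends componentwise analytically to ellipses $\dellipse_{\rho_D}^{(i)}$ with $\rho_D>1/2$ depending only on $\theta_\T$ (this is precisely the content of Lemma~\ref{lem:error_est_integrand_3d2d_dist}, which also provides the $h^{3-2s}$-bound on the extension). Another invocation of Theorem~\ref{thm:error_int_dd} finishes this case with the same exponential convergence rate.

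There is essentially no obstacle in the proof itself; every potentially hard step, namely verifying analyticity, controlling the size of the complex neighborhood uniformly in $h$, and extracting the correct $h^{3-2s}$ scaling, has been localized into the preceding lemmas. The only thing worth commenting on in the write-up is that, just as in Theorem~\ref{thm:estimates_int_sing_3d3d}, the estimate for $\tk_{2,D}$ should be read as a worst-case bound under the implicit assumption $d_{t,\tau}\approx h$, which is the regime used in the lemma; for genuinely well-separated pairs the same argument gives a much stronger bound, but this is outside the scope of the statement.
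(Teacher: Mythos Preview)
Your proposal is correct and follows essentially the same approach as the paper: invoke Lemma~\ref{lem:analyt_3d2d_Face} to verify the hypothesis of Theorem~\ref{thm:error_int_dd} for the singular cases, plug in the bounds from Lemmas~\ref{lem:error_est_integrand_3d2d_Face}--\ref{lem:error_est_integrand_3d2d_Vertex}, and handle the distance case via the obvious analyticity plus Lemma~\ref{lem:error_est_integrand_3d2d_dist}. Your additional remark about the worst-case assumption $d_{t,\tau}\approx h$ is appropriate and mirrors the comment the paper makes after the analogous tetrahedron--tetrahedron theorem.
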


\begin{proof}
 First, we consider the singularity cases.
 Lemma~\ref{lem:analyt_3d2d_Face} shows that the requirements of Theorem~\ref{thm:error_int_dd} are fulfilled for all singularity types 
 and Lemma~\ref{lem:error_est_integrand_3d2d_Face} to Lemma~\ref{lem:error_est_integrand_3d2d_Vertex} provides the corresponding estimates.
 Therefore, we can apply Theorem~\ref{thm:error_int_dd} and obtain the desired results.
 
 For the non-singular case it is obvious that the integrand $\tk_{2,D}$ is analytic. 
 Therefore, the requirements of Theorem~\ref{thm:error_int_dd} are fulfilled 
 and Lemma~\ref{lem:error_est_integrand_3d2d_dist} provides the corresponding estimates.
\end{proof}

\subsection{Rules for the number of Gauss points}

Since we now have error estimates for the numerical integration, it can be investigated how this error proceeds through the bilinear form. This was first done by \cite{AiCl2018}. 

\begin{theorem} \label{error_biliearform}
  Let $n_1$ and $n_2$ be the quadrature orders used for touching pairs of tetrahedra and tetrahedron and  panel, respectively.
  Denote by $a_Q$ the resulting approximation to the bilinear form $a$.  
  Then the consistency error due to the quadrature is bounded by
  \begin{equation}
     \lvert a(u,v) - a_Q(u,v)\rvert \leq C(E_1+E_2) \,\norm{u}_{L^2(\Omega)} \,\norm{v}_{L^2(\Omega)}, \quad u,v \in V_h,
  \end{equation}
  where 
  \[
    E_{1}  = h^{-3-2s}\, (2\rho_1)^{-2n_1} \quad \text{ and } \quad E_{2}  =  h^{-2-2s}\, (2\rho_2)^{-2n_2}
  \]
   with constants $\rho_j > 1/2$, $j=1,2$.
\end{theorem}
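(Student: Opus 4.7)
The plan is to expand the test functions in the nodal basis, $u=\sum_i u_i\varphi_i$ and $v=\sum_j v_j\varphi_j$, and then to reorganize
\[
a(u,v)-a_Q(u,v)
= \sum_{(t_1,t_2)\in\T\times\T} E_{t_1,t_2}(u,v)
+ \sum_{\substack{t\in\T,\\ \tau\in\Pa,\,\tau\subset\partial\Omega}} E_{t,\tau}(u,v),
\]
where $E_{t_1,t_2}(u,v)$ is the quadrature error of the integral~\eqref{int_3d3d} with $\varphi_i,\varphi_j$ replaced by the restrictions of $u,v$ to $t_1$ and $t_2$, and $E_{t,\tau}(u,v)$ is the corresponding error of~\eqref{int_3d2d}. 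Bilinearity of the integrand, together with the fact that at most four nodal hat functions are active on any tetrahedron, lets us write $E_{t_1,t_2}(u,v)=\sum_{i,j}u_iv_j\,E^{n_1}[\tilde k_{1,\star}^{(i,j)}]$, where the sum runs over the $O(1)$ pairs of basis functions attached to the vertices of $t_1\cup t_2$; an analogous expansion holds for $E_{t,\tau}$.

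Invoking Theorem~\ref{thm:estimates_int_sing_3d3d} for the singularity type $\star\in\{D,V,E,F,T\}$ that applies to $(t_1,t_2)$, the per-pair bound
\[
|E_{t_1,t_2}(u,v)| \leq C\,h^{3-2s}(2\rho_1)^{-2n_1}\,\|U_{t_1,t_2}\|_2\,\|V_{t_1,t_2}\|_2
\]
follows, where $U_{t_1,t_2}$ collects the at most eight nodal coefficients of $u$ attached to the vertices of $t_1\cup t_2$. A completely analogous estimate, using Theorem~\ref{thm:estimates_int_sing_3d2d}, holds for $|E_{t,\tau}(u,v)|$ with parameters $\rho_2,n_2$. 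Crucially, these constants are uniform in $h$, since the bounds in Lemmas~\ref{lem:error_est_integrand_3d3d_Tetra}--\ref{lem:error_est_integrand_3d2d_dist} depend only on $\theta_\T$.

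The final step is to sum both contributions via Cauchy--Schwarz, combined with two elementary counting facts. By the quasi-uniformity~\eqref{assum:quasi_uniform_h}, each vertex of $\T$ belongs to only $O(1)$ tetrahedra, and the standard $L^2$--nodal norm equivalence for piecewise-linear FEM gives $\sum_i u_i^2\leq Ch^{-3}\|u\|_{L^2(\Omega)}^2$. Each $u_i^2$ enters $\|U_{t_1,t_2}\|_2^2$ for $O(M)=O(h^{-3})$ element pairs, so $\sum_{t_1,t_2}\|U_{t_1,t_2}\|_2^2\leq Ch^{-6}\|u\|_{L^2(\Omega)}^2$, and Cauchy--Schwarz over the pair index yields
\[
\sum_{(t_1,t_2)}\|U_{t_1,t_2}\|_2\,\|V_{t_1,t_2}\|_2 \leq Ch^{-6}\,\|u\|_{L^2(\Omega)}\|v\|_{L^2(\Omega)}.
\]
Consequently the volume--volume contribution is bounded by $C\,h^{-3-2s}(2\rho_1)^{-2n_1}\|u\|_{L^2}\|v\|_{L^2}= C\,E_1\,\|u\|_{L^2}\|v\|_{L^2}$. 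For the tetrahedron--panel sum, each coefficient $u_i$ contributes to $\|U_t\|_2^2$ for $O(1)$ tetrahedra, and each tetrahedron pairs with $O(h^{-2})$ boundary panels, so the analogous counting gives $\sum_{t,\tau}\|U_t\|_2^2\leq Ch^{-5}\|u\|_{L^2(\Omega)}^2$ and a corresponding bound $C\,E_2\,\|u\|_{L^2}\|v\|_{L^2}$. Adding both contributions yields the claim.

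The main obstacle is the combinatorial bookkeeping that produces the correct powers of $h$: one must track the multiplicities with which each nodal coefficient enters the various local errors, carefully distinguish the volume--volume case (whose total pair count scales as $h^{-6}$) from the volume--surface case (where the codimension-one nature of $\partial\Omega$ reduces the count to $h^{-5}$), and confirm that the constants from Theorems~\ref{thm:estimates_int_sing_3d3d} and~\ref{thm:estimates_int_sing_3d2d} are truly independent of $h$---which is guaranteed by~\eqref{eq:theta_T} together with the shape-regularity of the mesh. Once those scaling factors are in place, the remainder of the argument is a direct application of Cauchy--Schwarz.
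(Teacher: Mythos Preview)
Your argument is correct and reaches the stated bound. The paper's own proof is much shorter: it simply quotes from \cite{AiCl2018} the reduction
\[
|a(u,v)-a_Q(u,v)| \leq C\,h^{-2d}\Big[\max_{t_1,t_2\in\T}\max_{i,j\in I_{t_1,t_2}}|E^{t_1,t_2}_{i,j}| + h\,\max_{t\in\T,\,\tau\in\Pa_{\partial\Omega}}\max_{i,j\in I_t}|E^{t,\tau}_{i,j}|\Big]\,\|u\|_{L^2(\Omega)}\|v\|_{L^2(\Omega)}
\]
and then inserts the local quadrature bounds of Theorems~\ref{thm:estimates_int_sing_3d3d} and~\ref{thm:estimates_int_sing_3d2d}. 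What you do is re-derive that cited inequality from scratch: your nodal expansion, the norm equivalence $\sum_i u_i^2\leq Ch^{-3}\|u\|_{L^2}^2$, and the Cauchy--Schwarz/counting arguments are precisely the ingredients that generate the $h^{-2d}=h^{-6}$ prefactor and the additional factor $h$ in the surface term (coming from the codimension-one scaling $|\Pa_{\partial\Omega}|\sim h^{-2}$ versus $|\T|\sim h^{-3}$). So your route is self-contained where the paper outsources the combinatorics to a reference; both rest on the same local estimates and yield identical powers of $h$. One point you should make explicit: you apply the bound $C\,h^{3-2s}(2\rho_1)^{-2n_1}$ uniformly to \emph{all} pairs $(t_1,t_2)$, whereas Lemma~\ref{lem:error_est_integrand_3d3d_dist} is only stated for the near-field case $d_{t_1,t_2}\approx h$. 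For well-separated pairs the integrand is analytic on a larger ellipse with a smaller sup-norm, so the same estimate holds a fortiori, but this deserves a sentence.
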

\begin{proof}
 In \cite{AiCl2018} it was shown that
 \begin{align*}
   \lvert a(u,v) - a_Q(u,v) \rvert 
       &\leq C h^{-2d} \left[ \max_{t_1,t_2 \in \T}\,\max_{i,j\in I_{t_1,t_2}} | E^{t_1,t_2}_{i,j} | + h\, \max_{t\in\T, \tau\in\Pa_{\partial\Omega}}\,\max_{i,j\in I_t} | E^{t,\tau}_{i,j} | \right] 
            \lVert u \rVert_{L^2(\Omega)} \, \lVert v \rVert_{L^2(\Omega)},     
\end{align*}  
where $E^{t_1,t_2}_{i,j}$ and $E^{t,\tau }_{i,j}$ denote the error between the integrals $a^{t_1,t_2}(\fie_i,\fie_j)$ and $a^{t,\tau}(\fie_i,\fie_j)$,
  \begin{align*}
 	a^{t_1,t_2}(\fie_i,\fie_j) &:= \int_{t_1} \int_{t_2} \frac{[\varphi_i(x) - \varphi_i(y)]\,[\varphi_j(x)-\varphi_j(y)]}{ |x-y|^{d+2s}} \ud x\ud y, \\
	a^{t,\tau}(\fie_i,\fie_j) &:= \int_{t} \fie_i(x)\,\fie_j(x) \int_{\tau} \frac{(y-x)^T\, n_\tau}{|x-y|^{d+2s}} \ud s_y \ud x,
\end{align*}
and their approximations. The Theorems \ref{thm:estimates_int_sing_3d3d} to \ref{thm:estimates_int_sing_3d2d} give the corresponding error estimates.
\end{proof}

Since we know how the integration error affects the bilinear form, we can establish rules for the number of Gaussian points per dimension.

\begin{lemma}\label{lem:gauss_rules}
   The following choice of Gauss quadrature rules
   \[
      n_1  \geq \frac{1}{2} (3+l+s) \frac{\lvert\log ( h) \rvert }{\log (2\rho_1)}\quad \text{ and } \quad
      n_2  \geq \frac{1}{2} (2+l+s) \frac{\lvert\log ( h) \rvert }{\log (2\rho_2)}
   \]
conserves the convergence rate in Theorem~\ref{thm:error_u_h}.
\end{lemma}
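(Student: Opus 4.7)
The plan is to combine the consistency estimate of Theorem~\ref{error_biliearform} with a Strang-type argument: if the quadrature error in the bilinear form is of the same order as the finite element error in Theorem~\ref{thm:error_u_h}, then the full discrete error (Galerkin plus quadrature) still behaves like~$h^{l-s}$. Therefore it suffices to determine the smallest~$n_1,n_2$ for which the quantities $E_1$ and $E_2$ from Theorem~\ref{error_biliearform} are bounded by a constant times $h^{l-s}$.

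Concretely, I would proceed as follows. First, I impose the requirement
\[
  E_1 = h^{-3-2s}\,(2\rho_1)^{-2n_1} \,\leq\, C\, h^{l-s},
\]
and analogously
\[
  E_2 = h^{-2-2s}\,(2\rho_2)^{-2n_2} \,\leq\, C\, h^{l-s}.
\]
Taking logarithms in the first inequality gives
\[
  2n_1 \log(2\rho_1) \,\geq\, (l-s) |\log h| + (3+2s) |\log h| \,=\, (3+l+s)\,|\log h|,
\]
since $\log h < 0$ and $\log(2\rho_1) > 0$ because $\rho_1 > 1/2$. Dividing by $2\log(2\rho_1)$ yields the stated lower bound on~$n_1$. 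The same manipulation applied to the second inequality yields the lower bound on~$n_2$, with the exponent $2+2s$ replacing $3+2s$, hence $(2+l+s)$ instead of $(3+l+s)$.

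The only subtle point is making sure that the consistency error on $V_h$ indeed translates into the same asymptotic order in the $\tilde H^s(\Omega)$-norm for the discrete solution. This is handled by a standard Strang-type perturbation argument: once $|a(u_h,v_h)-a_Q(u_h,v_h)| \lesssim h^{l-s} \|u_h\|_{L^2}\|v_h\|_{L^2}$, coercivity of $a$ together with the continuous embedding $L^2(\Omega) \hookrightarrow \tilde H^s(\Omega)^*$ (for piecewise linears on a quasi-uniform mesh) bounds the additional error in the energy norm by a constant times $h^{l-s}$, matching~\eqref{eq:error_u_h}. The main obstacle, if any, is tracking that the prefactors depending on $\|u\|_{L^2}$ and $\|v\|_{L^2}$ remain bounded uniformly in~$h$, which follows from the stability of the Galerkin approximation. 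No additional calculation beyond the logarithmic inversion above is needed to obtain the two stated rules.
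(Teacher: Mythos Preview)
Your proposal is correct and follows essentially the same route as the paper: impose $E_1\leq h^{l-s}$ and $E_2\leq h^{l-s}$, take logarithms, and invoke a Strang-type estimate to absorb the consistency error into the finite element error. The paper cites this Strang-type bound from \cite{AiCl2018} in the form $\lVert u-u_h\rVert_{\tilde H^s(\Omega)}\leq C\bigl(h^{l-s}|u|_{H^l(\Omega)}+(E_1+E_2)\lVert\Pi_h u\rVert_{L^2(\Omega)}\bigr)$ with $\Pi_h$ the Scott--Zhang interpolant, which is exactly the perturbation argument you sketch.
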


\begin{proof}
In \cite{AiCl2018} it is already shown that
 \begin{align*}
    \rVert u - u_h \rVert_{\tilde H^s(\Omega)} \leq C\, \left( h^{l-s}\,  |u|_{ H^l(\Omega)} + (E_1+E_2)\, \norm{\Pi_h u}_{L^2(\Omega)} \right),                                     
 \end{align*}
 where $\Pi_h$ is the Scott-Zhang interpolation operator; see \cite{Scott_Zhang} and \cite{Ciralet10}.
The last step is to choose the number of Gauss points in such a way that the errors $E_1$ and $E_2$ in Theorem \ref{error_biliearform} have the same order of magnitude as the other error, i.e.\ we require $E_1 \leq h^{l-s}$ and $E_2 \leq h^{l-s}$.
\end{proof}

\section{Numerical Results}
The focus of the following numerical tests lies on the verification of the presented error estimates.
First, we focus on the error of numerical integration for singular integrals of the type~\eqref{int_3d3d} and~\eqref{int_3d2d}. 
For this purpose, we verify for two selected singularity cases whether the error estimates from the Theorems~\ref{thm:estimates_int_sing_3d3d} and~\ref{thm:estimates_int_sing_3d2d} hold true. Second, we consider a fractional diffusion process and validate the tensor Gauss quadrature rule presented in Lemma~\ref{lem:gauss_rules}. 

\subsection{Singular integrals}
We consider the following integrals of the type~\eqref{int_3d3d} and~\eqref{int_3d2d}
\begin{align*}
    I_1 :=-\int_{t_1 } \int_{t_2} \dfrac{  \varphi_i(y) \,\varphi_j(x)} { \lvert x-y \rvert^{3+2s}}\ud y \ud x
     \quad\text{ and } \quad 
    I_2 :=\int_t \varphi_i^2(x) \int_\tau   \dfrac{ (y-x)^T n_\tau(y) } { \lvert x - y \rvert^{3+2s}}\ud s_y \ud x
\end{align*}
with $s=0.8$.
For the first integral we choose $t_1$ and $t_2$ such that both tetrahedra share a common face~$F$
and for the second integral $t$ and $\tau$ are selected so that the tetrahedron and the panel have a common edge.
For the linear basis functions, we choose $\suppa \fie_i \cap\, \suppa \fie_j = F$ for $I_1$ 
and for $I_2$ we set $i=j$. 
After applying the respective Duffy transformation, a three-dimension integral must be computed numerically in both cases.

Since the integrals cannot be integrated analytically, we replace the exact values $I_1 = I[\tk_{1,F}]$ and $I_2 = I[\tk_{2,E}]$
by $Q^{20}[\tk_{1,F}]$ and $Q^{20}[\tk_{2,E}]$, respectively, where $Q^{20}$ denotes the approximation of the integral with $20$ Gauss points per dimension.
In Figure~\ref{fic:singInt}, the error of numerical integration $E^n$, $n=2,\ldots,8$, is represented for different diameters~$h$.
Notice that the error is plotted logarithmically.
\begin{figure}[h]
	\begin{subfigure}[h]{.49\linewidth}
	    \centering \includegraphics[width=\textwidth]{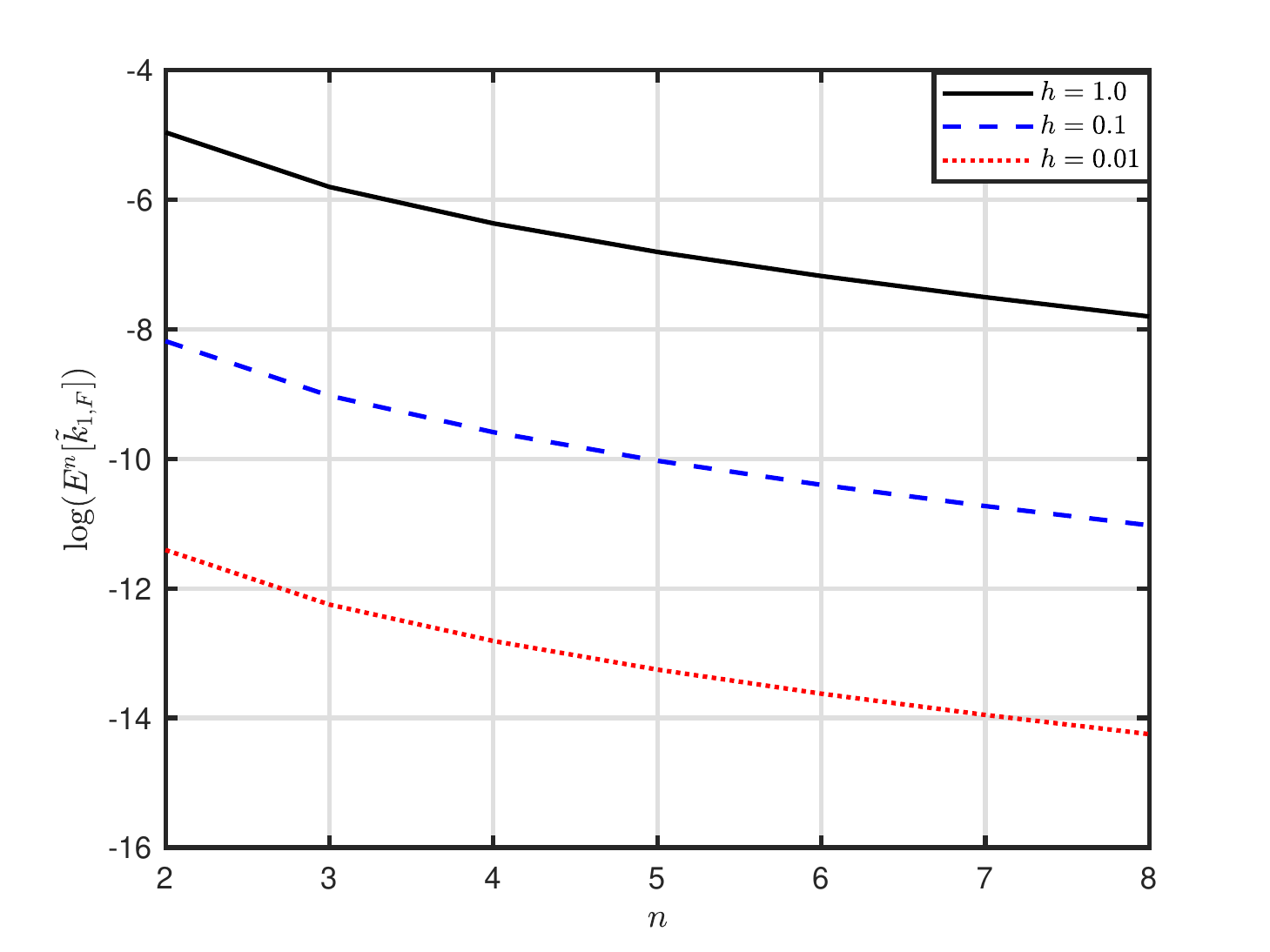}
	    \caption{Integration error for $I_1$} \label{fic:I1}
	\end{subfigure}
	\begin{subfigure}[h]{.49\linewidth}
	    \centering \includegraphics[width=\textwidth]{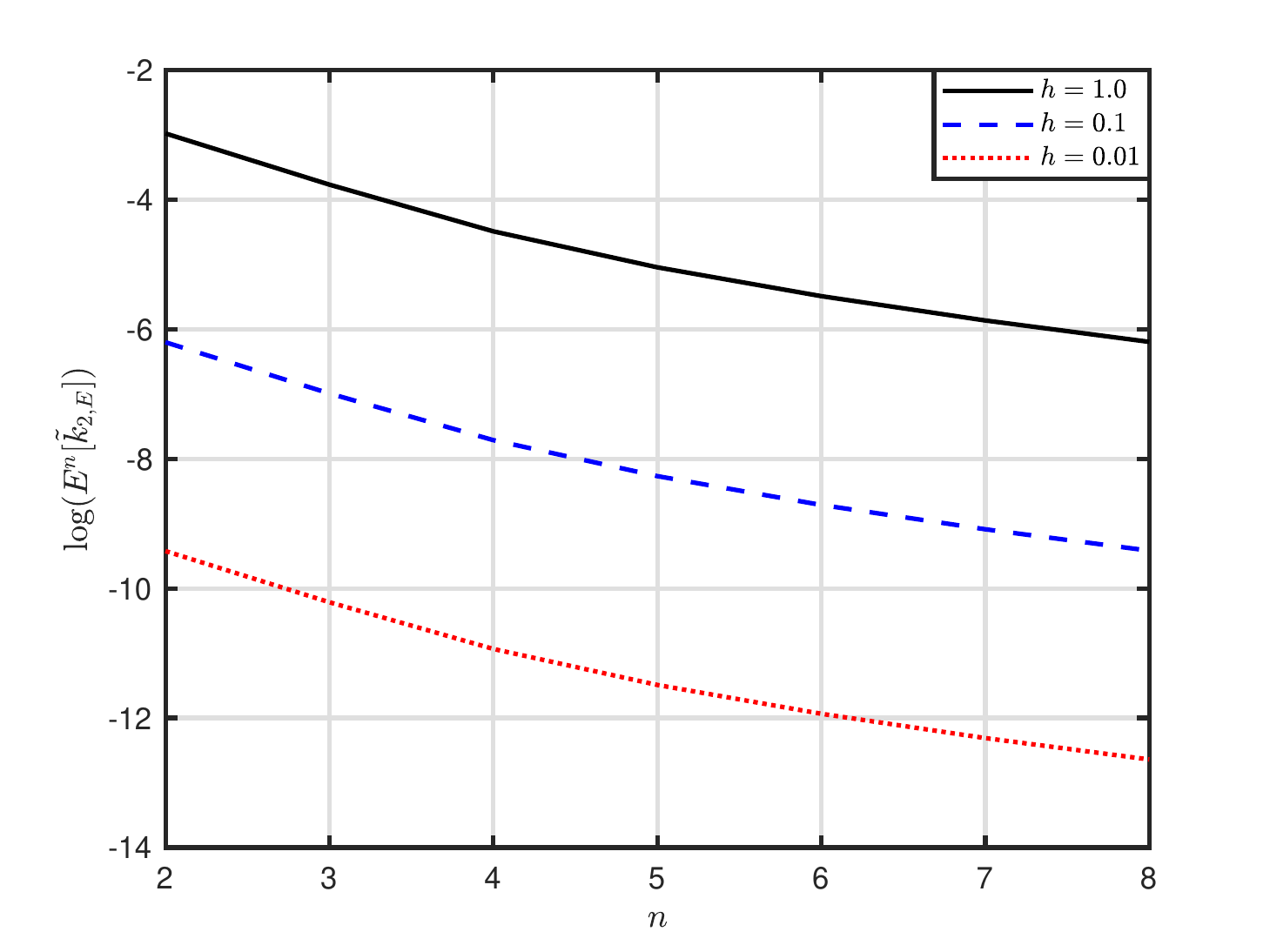}
	    \caption{Integration error for $I_2$} \label{fic:I2}
	\end{subfigure}		
	\caption{integration error for different values of $h$} \label{fic:singInt}
\end{figure}
Based on the Theorems~\ref{thm:estimates_int_sing_3d3d} and~\ref{thm:estimates_int_sing_3d2d}, 
one expects $\log(E^n)$ to be linear with respect to the number of Gauss points.
This can also be observed in the Figures~\ref{fic:I1} and~\ref{fic:I2}. Obviously, $h$ has no influence on the slope of the curves.

\subsection{Fractional Poisson problem}
For the second part of the numerical results, 
we consider the problem~\eqref{eq:fracL} from the beginning of this article, the fractional Poisson problem,
where we set $f=1$ and $\Omega = B_1(0)$:
\begin{align}\label{eq:fracLapBsp} 
\begin{split}
   (-\Delta)^s u &= 1 \quad\text{in } B_1(0), \\
               u &= 0 \quad\text{on } \R^d \backslash B_1(0).
\end{split}               
\end{align}
In this case the analytic solution of the problem 
\[
  u(x) = \frac{2^{-2s}}{\Gamma^2(1+s)} (1-|x|^2)^s,
\]
is well known (see \cite{AcBo2017}), where $\Gamma$ is the Gamma function. The solution~$u$ is depicted in Figure~\ref{fic:us02} and~\ref{fic:us08} for $x_3=0$.
\begin{figure}[htb!]
	\begin{subfigure}[h]{.49\linewidth}
	    \centering \includegraphics[width=\textwidth]{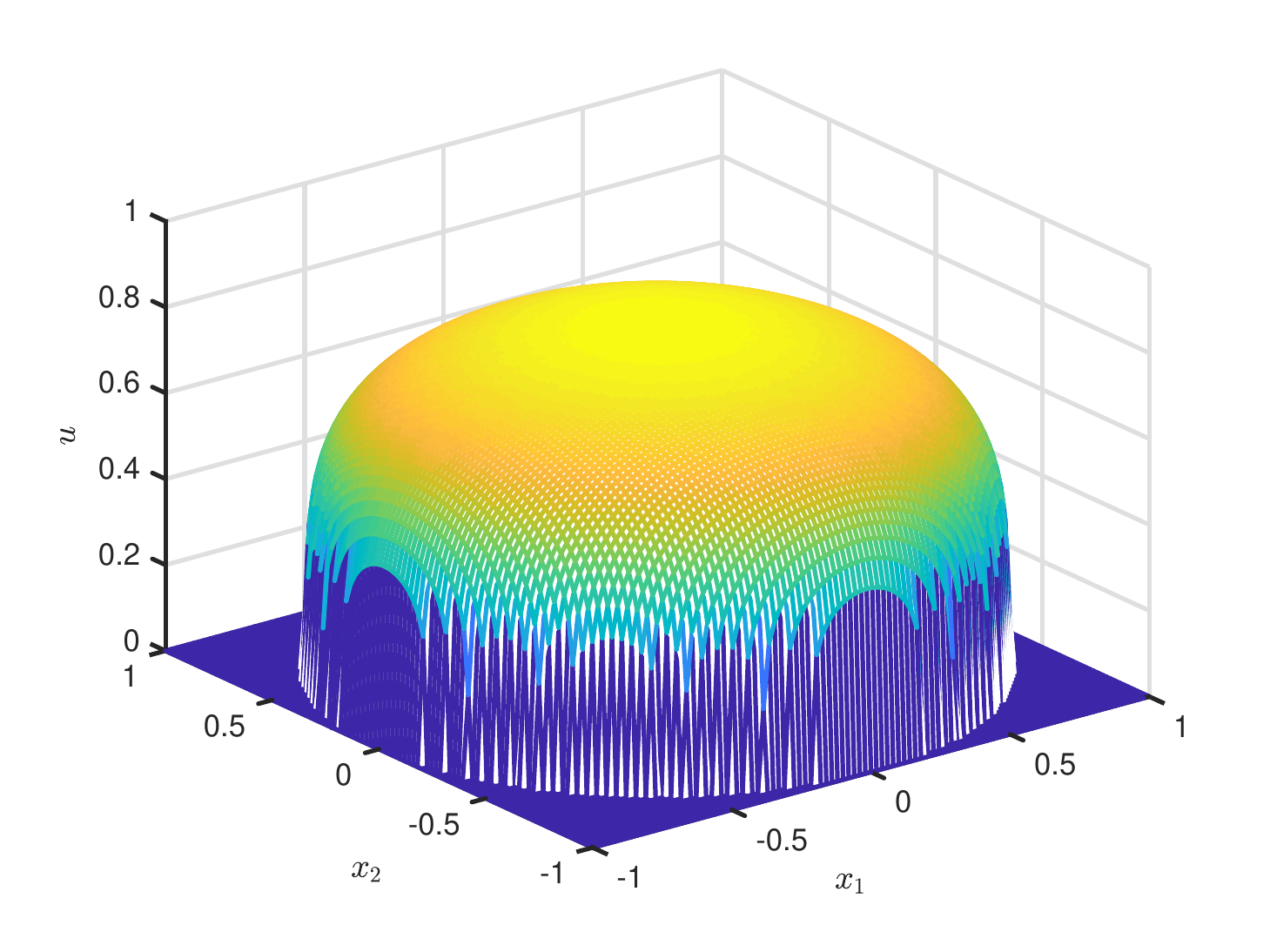}
	    \caption{$s=0.2$} \label{fic:us02}
	\end{subfigure}
	\begin{subfigure}[h]{.49\linewidth}
	    \centering \includegraphics[width=\textwidth]{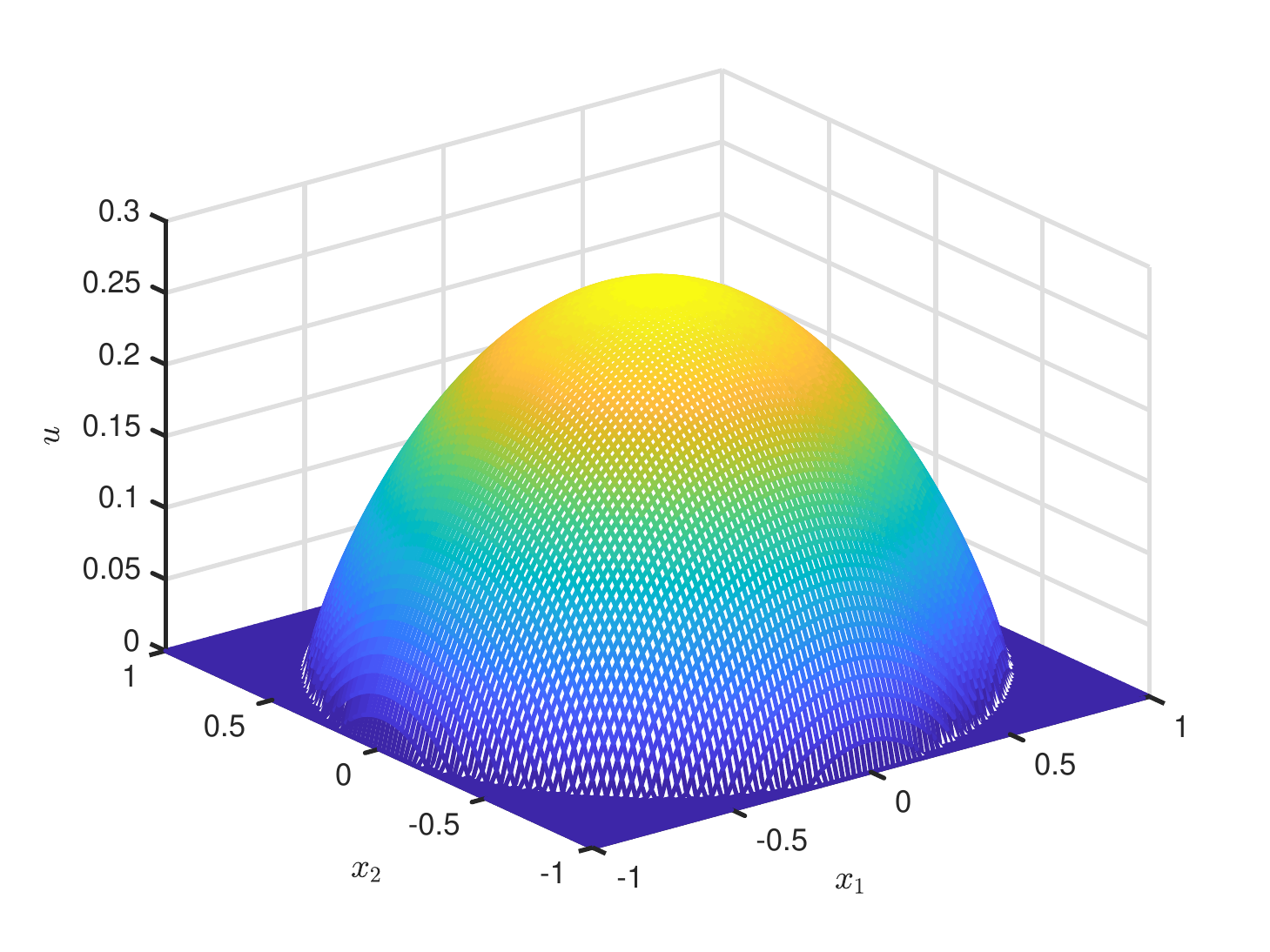}
	    \caption{$s=0.8$} \label{fic:us08}
	\end{subfigure}		
	\caption{Exact solution~$u$} \label{fic:u}
\end{figure}
Following the approach described in Section~\ref{sec:intro}, we discretize the problem~\eqref{eq:fracLapBsp}.
Notice that according to Lemma~\ref{lem:mapTetra} the interior angles of the panels should not be chosen too small,
otherwise the Duffy transformation becomes unstable. 
\begin{figure}[htb!]
	\begin{subfigure}[h]{.49\linewidth}
	    \centering \includegraphics[width=\textwidth]{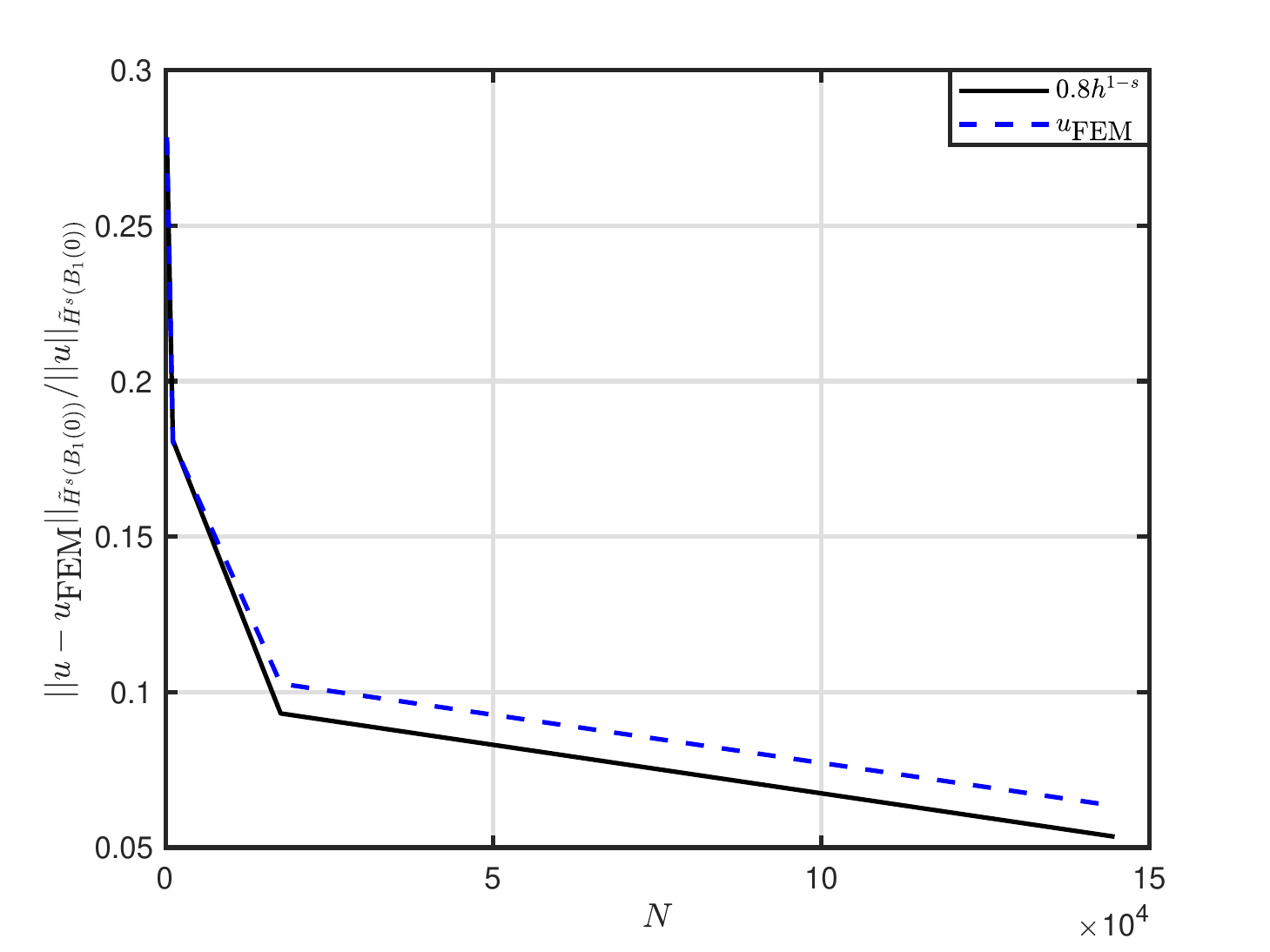}
	    \caption{$s=0.2$} \label{fic:sol_02}
	\end{subfigure}
	\begin{subfigure}[h]{.49\linewidth}
	    \centering \includegraphics[width=\textwidth]{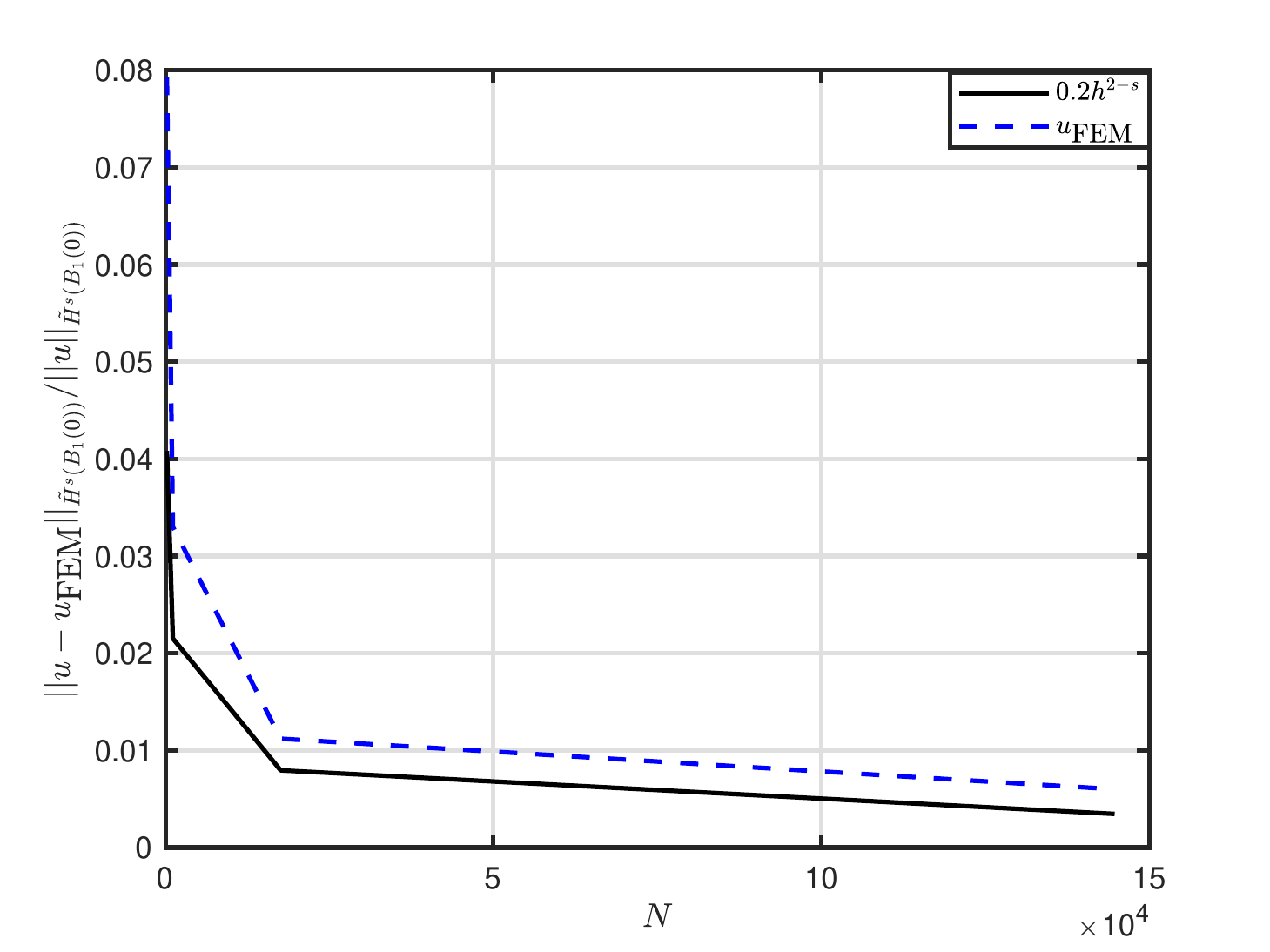}
	    \caption{$s=0.8$} \label{fic:sol_08}
	\end{subfigure}		
	\caption{Relative error between the analytic and numerical solution} \label{fic:sol}
\end{figure}
For the computation of the stiffness matrix we use the lower bound on the number of Gauss points per dimension presented in
Lemma~\ref{lem:gauss_rules}.
Figure~\ref{fic:sol} shows the relative error between the exact solution~$u$ and the finite element solution~$u_{\textnormal{FEM}}$ 
together with the theoretical error estimate of Theorem~\ref{thm:error_u_h}, where we set the smoothness index~$l$ to the maximum value in each case.
In both Figures~\ref{fic:sol_02} and~\ref{fic:sol_08} we see that the relative errors w.r.t.\ the $\tilde H^s(B_1(0))$-norm are close to the error estimate and behave in the same way. 
Comparing Figure~\ref{fic:sol_02} and Figure~\ref{fic:sol_08}, it seems
that the relative errors for $s=0.2$ are an order of magnitude larger than those for $s=0.8$.
This is due to the smoothness of the solution.

\end{document}